\DeclareMathAlphabet{\mymathbb}{U}{bbold}{m}{n}
\newtheorem{thm}{Theorem}
\theoremstyle{definition}
\newtheorem{rem}[thm]{Remark}
\newtheorem{exam}[thm]{Example}
\newtheorem{lem}[thm]{Lemma}
\newtheorem{defn}[thm]{Definition}
\newtheorem{cor}[thm]{Corollary}
\newtheorem{fact}[thm]{Fact}
\newtheorem*{ques*}{Question}
\newtheorem*{prob*}{Problem}
\newcommand{\base}{r}
\newcommand{\cE}{\mathcal{E}}
\newcommand{\cM}{\mathcal{M}}
\newcommand{\cC}{\mathcal{C}}
\newcommand{\sB}{\mathcal{B}}
\newcommand{\wf}{\text{Wf}}
\newcommand{\tr}{\text{Tr}}
\newcommand{\bs}{\boldsymbol{\Sigma}}
\newcommand{\bp}{\boldsymbol{\Pi}}
\newcommand{\bP}{\boldsymbol{\Pi}}
\newcommand{\bS}{\boldsymbol{\Sigma}}
\newcommand{\bG}{\boldsymbol{\Gamma}}
\newcommand{\bsdzo}{{\check{\boldsymbol{\Sigma}}}^{\raisebox{-2pt}{\scriptsize{0}}}_1}
\newcommand{\bsdza}{{\check{\boldsymbol{\Sigma}}}^{\raisebox{-2pt}{\scriptsize{0}}}_\alpha}
\newcommand{\bd}{\boldsymbol{\Delta}}
\newcommand{\res}{\restriction}
\newcommand{\ww}{\omega^\omega}
\DeclareMathOperator{\Emp}{\mathcal{E}}
\DeclareMathOperator{\seq}{seq}
\renewcommand{\phi}{\varphi}
\DeclareMathOperator{\M}{\mathcal{M}}
\DeclareMathOperator{\MT}{\mathcal{M}_{\mathit T} }
\DeclareMathOperator{\MTe}{\M_{\mathit T}^e}
\newcommand{\weakst}{weak$^*$}
\newcommand{\eps}{\varepsilon}
\newcommand{\R}{\mathbb{R}}
\newcommand{\N}{\mathbb{N}}
\author[K. Deka]{Konrad Deka}%
\address[K. Deka]{
Faculty of Mathematics and Computer Science, Jagiellonian University in Krakow, ul. \L o\-jasiewicza 6, 30-348 Krak\'ow, Poland,
}
\email{Konrad.Deka}
\author[S. Jackson]{Steve Jackson}
\address[S. Jackson]{Department of Mathematics, University of North Texas,
General Academics Building 435, 1155 Union Circle,  \#311430, Denton, TX 76203-5017, USA}
\email{stephen.jackson@unt.edu}
\author[D. Kwietniak]{Dominik Kwietniak}%
\address[D. Kwietniak]{
Faculty of Mathematics and Computer Science, Jagiellonian University in Krakow, ul. \L o\-jasiewicza 6, 30-348 Krak\'ow, Poland,
}
\email{dominik.kwietniak@uj.edu.pl}
\urladdr{www.im.uj.edu.pl/DominikKwietniak/}
\author[B. Mance]{Bill Mance}
\address[B. Mance]{Faculty of Mathematics and Computer Science, Adam Mickiewicz University, ul. Umultowska 87, 61-614 Pozna\'{n}, Poland}
\email{William.Mance@amu.edu.pl}
\urladdr{wilman.home.amu.edu.pl/}
\title[Borel complexity of sets of orbits]
{Borel complexity of sets of points with prescribed Birkhoff averages in Polish dynamical systems with a specification property}
\begin{document}

\begin{abstract}We study the descriptive complexity of sets of points
defined by placing restrictions on statistical behaviour of their orbits in dynamical systems on Polish spaces. A particular examples of such sets are the set of generic points of a $T$-invariant Borel probability measure,  but we also consider much more general sets (for example, $\alpha$-Birkhoff regular sets and the irregular set appearing in multifractal analysis of ergodic averages of a continuous real-valued function). We show that many of these sets are Borel. In fact, all these sets are Borel when we assume that our space is  compact. We provide examples of these sets being non-Borel, properly placed at the first level of the projective hierarchy (they are complete analytic or co-analytic). This proves that the compactness assumption is in some cases necessary to obtain Borelness. When these sets are Borel, we use the Borel hierarchy to measure their descriptive complexity. We show that the sets of interest are located at most at the third level of the hierarchy. We also use a modified version of the specification property to show that for many dynamical systems these sets are properly located at the third level. To demonstrate that the specification property is a sufficient, but not necessary condition for maximal descriptive complexity of a set of generic points, we provide an example of a compact minimal system with an invariant measure whose set of generic points is $\bp^0_3$-complete.
\end{abstract}

\maketitle

\section{Introduction}
Every Borel subset of an uncountable Polish metric space $X$ is a result of the following inductive procedure, whose steps are enumerated by countable ordinals:
for $\alpha=1$ we take all  open subsets of $X$ and their complements (closed subsets of $X$). We continue by transfinite induction:
if $\beta<\omega_1$ is a countable ordinal and we completed our procedure for all ordinals $0<\alpha<\beta$, then at the step $\beta$ we add all possible countable unions (or intersections)
of sets obtained so far and their complements. This completes level $\beta$ of the procedure. So at the step $\alpha=2$ we add all $G_{\delta}$ and $F_\sigma$ subsets, which are neither open nor closed, and in the next step we obtain all sets which are countable intersections  of $F_\sigma$ sets and all countable unions of $G_\delta$ sets, which are neither $G_\delta$ nor $F_\sigma$ themselves. This introduces a natural hierarchy of Borel subsets of $X$ which reflects their
descriptive complexity. It is known that for an uncountable Polish space
these levels do not collapse: at each level there appear new sets (sets that do
not belong at any lower level of the hierarchy). Determining the level where a ``naturally arising'' or ``non-ad hoc''
Borel set appears in the hierarchy becomes a challenging problem. Only a few concrete examples of Borel sets are known to be located above the third level.

Here, we study the Borel complexity of sets appearing naturally in ergodic theory, dynamics, number theory, and fractal geometry. We develop tools which allow us to determine the exact position of many Borel sets examined in the literature. We show that without compactness some sets studied in topological dynamics become completely analytic (completely analytic sets are, in some sense, the simplest instances of non-Borel sets).

We state our main result in the framework of a dynamical system given by iteration of a continuous map $T$ on a Polish metric space $X$. In this setting the sets of interest consist of points whose orbits have prescribed statistical behaviour with respect to a bounded continuous observable $\phi\colon X\to\R$. Here by ``statistical behaviour of an orbit of $x\in X$'' we mean the asymptotic behaviour of the sequence of Birkoff averages of $\phi$ along the orbit of $x$. In the classical topological dynamics, one usually assumes that the space $X$ is compact, but some expansions, most notably, continued fraction expansions come from dynamical systems acting on non-compact Polish spaces, so we work in this greater generality.
We refer the reader to \cite{GM, ITV,IV, Sarig} for related problems for dynamical systems (linear operators, countable Markov chains) on non-compact phase spaces. We study the following questions:

\begin{prob*}
Given a continuous map $T$ from a Polish space $X$ to itself and a $T$-invariant
measure $\mu$, what is the complexity of the set $G_\mu$ of
generic points for the system ($G_\mu$ is defined precisely below)?
More generally, given a
nonempty, closed, and connected subset $V$ of the space $\MT(X)$ of
all $T$-invariant Borel probability measures on $X$ endowed with the
weak$^*$ topology, what is the Borel complexity of the set $G(V)$ of all
points $x$ in $X$ whose orbits generate $V$, that is, the set of limit
points of a sequence $\Emp(x,N)$ equals $V$, where $\Emp(x,N)$ denotes
the normalized counting measure concentrated on
$x,T(x),\ldots,T^{N-1}(x)$. Finally, given a bounded continuous function $\phi\colon X\to\R$, we study Borel complexity of sets of those $x\in X$ that
the sequence of Birkhoff averages
\[
A_k\phi(x)=\frac1k\sum_{j=0}^{k-1}\phi(T^j(x))
\]
converge as $k\to\infty$ to a given value $\alpha\in\R$. We also examine the set of points where this sequence does not converge for some or all bounded continuous function(s) (the \emph{irregular set}).
\end{prob*}

Our initial interest in these questions comes from a result of H.\ Ki and T.\ Linton, who answered a question posed by A.~Kechris
and showed in \cite{KiLinton} that for any
integer $\base \geq 2$, the set of numbers that are normal in base $\base$ is
a $\bp^0_3$-complete set (a $\bp^0_3$ set is a countable intersection of $F_\sigma$ sets,
the general definitions are given in  \S\ref{sec:voc}). Recall that we say that a real
number $\xi$ is {\em normal in base $\base$} if in its $\base$-adic expansion
every block of digits of length $k$ appears with asymptotic frequency
$1/\base^k$. After \cite{KiLinton} many authors have studied
the Borel complexity of various sets related to normal numbers, and
have extended Ki and Linton's result in different directions.
V.\ Becher, P.\ A.\ Heiber, and T.\ A.\ Slaman \cite{BecherHeiberSlamanAbsNormal}
settled a conjecture of A.\ S.\ Kechris by showing that the set of
absolutely normal numbers (a real number is {\em absolutely normal}
if it is normal to all bases $\base\ge 2$)
is $\bp^0_3$-complete. Furthermore, V.\ Becher and T.\ A.\ Slaman \cite{BecherSlamanNormal}
proved that the set of numbers normal in at least one base is
$\bs^0_4$-complete. See also \cite{AireyJacksonManceComplexityNormalPreserves}, \cite{BerosDifferenceSet}.
Also, sets of interest in dynamics are sometimes complete at levels past the Borel hierarchy.
For example, in \cite{JMR} a dynamical system in the unit square is constructed for which the set of
special $\alpha$-limit points is $\bP^1_1$-complete.

It is well-known that a real number $\xi$ being normal in base $\base$ is equivalent to
the sequence $(\base^k \xi)_k $ being uniformly distributed  mod $1$
(cf.\ \cite{KuN}). The latter statement can be rephrased as $\xi\bmod 1$ is a generic point for the Lebesgue measure on the unit circle $\mathbb{T}=\mathbb{R}/\mathbb{Z}$ under the action of the map $\mathbb{T}\to\mathbb{T}$ given by $x\mapsto \base x\bmod 1$.
Thanks to this dynamical point of view, in \cite{AJKM} we were able to study the Borel complexity of sets of normal numbers in several numeration systems. We employed
a unified treatment for $\base$-ary expansions, continued fraction expansions, $\beta$-expansions, and generalized GLS-expansions. In fact, we considered a dynamical system given by subshifts generated by all possible expansions of numbers in $[0,1]$ in full-shifts over an at most countable set of digits (alphabet). In that setting normal numbers with respect to a given expansion method correspond to generic points of an appropriately selected shift-invariant measure on a subshift containing all possible expansions. It turns out that for these subshifts the set of generic points for any shift-invariant probability measure is precisely at the third level of the Borel hierarchy (it is a $\bp^0_3$-complete set). The crucial dynamical feature we used was a feeble form of specification. All expansions named above generate subshifts with this specification property.

Our aim is to extend the results of \cite{AJKM}. We generalise \cite{AJKM} in two directions: First, we consider arbitrary (in particular, non-symbolic)  dynamical systems on Polish spaces. In \cite{AJKM} only symbolic systems were considered. Second, we determine the descriptive complexity of sets of points defined by placing restrictions on statistical distribution of their orbits with respect to all bounded continuous observables. By statistical distribution of the orbits we understand the asymptotic behaviour of a sequence of Birkhoff averages along an orbit. In \cite{AJKM} only sets of generic points were considered. Our level of generality still allows us to study dynamical systems leading to various expansions considered  in \cite{AJKM}, but it also contains many more examples: we are now able to study the descriptive complexity of sets of numbers defined in terms of the asymptotic behaviour of the frequencies of the digits in their expansions. So far, only of the Hausdorff dimension of these sets was examined (see, for example, L. M. Barreira, B. Saussol and J. Schmeling, \cite{BSS}, A. H. Fan, D. J. Feng and J. Wu, \cite{FFW}, or L. O. R. Olsen, \cite{Olsen}). To apply our results in other branches of mathematics it remains  to explain how to reduce the set we want to examine to the relevant dynamical form.



There are very simple examples of compact dynamical systems with ergodic invariant measures whose generic points are Borel sets of low complexity:
If $X$ is the unit circle and $T$ is an irrational rotation, then every point is generic for the Lebesgue measure $\lambda$ so $G_\lambda$ is clopen. On the other hand, if $X=[0,1]$ and $T(x)=x^2$ for $0\le x \le 1$, then $T$ has two ergodic invariant measures, namely Dirac-point measures $\delta_0$ and $\delta_1$ concentrated at fixed points $0$ and $1$. We clearly have
$G_{\delta_0}=[0,1)$ is open but not closed, and $G_{\delta_1}=\{1\}$ is closed but not open. Hence, similarly as in \cite{AJKM} some assumptions about the dynamical system are necessary to achieve the maximal complexity of sets with prescribed Birkhoff averages for some (or all) observables.
It turns out that the right assumption to
impose on the system is a form of the {\em specification property}.
The specification property was introduced by R.\ Bowen in his
paper on Axiom A diffeomorphisms \cite{BowenAxiomA}. However, for the main results of our paper
only a certain weak form of the specification property (coined the \emph{strong approximate product structure}) is needed.
It implies the feeble specification property used in \cite{AJKM} for symbolic systems.
We refer the reader to \cite{KLO} for a discussion of the specification property
and its many variants as well as their significance in dynamics.

We study at which Borel hierarchy levels the sets $G_\mu$ (or more
generally, sets like $G(V)$, see \S\ref{sec:SOI-def} for the full list) may appear in the above situation and present a
theorem saying that under fairly general assumptions these sets always
have the maximal possible complexity.

\subsection*{Organization of the paper}
After reviewing the notation and terminology in \S\ref{sec:voc},
we discuss the specification property and its weakening in \S\ref{sec:spec}. In  \S\ref{sec:SOI-def} we define
various sets described by the statistical behaviour of Birkhoff averages, and in \S\ref{sec:ub} we prove that all these sets are (co-)analytic, some are always Borel, and all are always Borel when the space $X$ is compact. In other words, in \S\ref{sec:ub} we gather upper bounds for the descriptive complexity of our sets of interest.
These computations hold in general, that is, they do not require specification hypotheses on the system.
In \S\ref{sec:ws} we state and prove our main results regarding lower bounds assuming our variant of the specification property (the strong approximate product structure).
Whenever these bounds match the upper bounds obtained in the previous section, we obtain a completeness result.
In particular, this is the case for all these sets when $X$ compact.
For example, we see that the set of generic points $G_\mu$  is $\bp^0_3$-complete. 
In \S\ref{sec:hlc} we present a (non-compact) dynamical system for which $G(V)$ (here
$V$ may consist of a single measure $\mu$) is $\bp^1_1$-complete.
Thus, we see a dichotomy in the complexity of $G(V)$ depending on whether $X$ is
compact. Finally, in \S\ref{sec:minimalsubshift} we give an example of a minimal compact symbolic system (subshift) $(X, T)$ with an invariant ergodic measure $\mu$ such that $G_\mu$ is $\bP_3^0$-complete. This example shows that the specification property is sufficient for maximal complexity, but by no means necessary.

\subsection*{Acknowledgments}

Konrad Deka and Dominik Kwietniak were supported by the National Science
Centre, Poland, grant Preludium-bis no. 2019/35/O/ST1/02266. Steve Jackson was supported by NSF grant 1800323. Bill Mance is supported by the National Science Centre, Poland, grant Sonata-bis no.  2019/34/E/ST1/00082 for the project.

\section{Vocabulary/definitions/notation} \label{sec:voc}
\subsection{General} Throughout this paper $\omega=\{0,1,2,\ldots\}$, $\N$ stands for the set of positive integers,
the variables $i,j,k,\ell,m,n$ are integers, and
$X$ always denotes a Polish topological space.
Note that we are not assuming compactness
of $X$ in general; any compactness assumptions on $X$ will be stated explicitly
as needed (our main result does not require any compactness assumptions).

By a \emph{Polish dynamical system}, which we also just call a
\emph{dynamical system}, we mean a pair $(X,T)$ where
$T\colon X\to X$ is a continuous map.
We stress that whenever we consider a Polish dynamical system $(X,T)$ we tacitly assume that $X$ is equipped with a fixed  compatible complete metric $\rho$. If $X$ is non-compact, then various properties we will be considering, such as the specification property
and its variants, depend on $\rho$ in a sense that changing the metric to an equivalent one may affect these properties. For an example where the specification property is lost with a change of metric, see \cite[Proposition 7.1]{KU} (in other words, for non-compact spaces the specification property is no longer an invariant for topological conjugacy). For compact spaces the choice of metric is irrelevant.


\subsection{Borel, analytic, and co-analytic sets}
We recall some basic notions from descriptive set theory which we use
to measure  the complexity of sets in Polish spaces.
The collection of {\em Borel sets} $\sB(X)$, for $X$ a topological space, is
the smallest $\sigma$-algebra containing the open sets.

We let $\bs^0_1$ be the collection of open sets and  $\bp^0_1=
\bsdzo= \{ X\setminus A\colon A \in \bs^0_1\}$ be the collection of closed sets.
In general, for $\alpha<\omega_1$ we let $\bs^0_\alpha$ be the collection of
countable unions $A=\bigcup_n A_n$ where each $A_n \in \bp^0_{\alpha_n}$
for some $\alpha_n<\alpha$. We also let $\bp^0_\alpha= \bsdza= \{ X\setminus A\colon A \in \bs^0_\alpha\}$.
Equivalently, $A\in \bp^0_\alpha$ if $A=\bigcap_n A_n$ where
$A_n\in \bs^0_{\alpha_n}$ and each $\alpha_n<\alpha$.
We also set $\bd^0_\alpha= \bs^0_\alpha \cap \bp^0_\alpha$. In particular,
$\bd^0_1$ is the collection of clopen sets.
In classical terminology, $\bs^0_2$ is the collection of $F_\sigma$ sets and $\bp^0_2$
is the collection of $G_\delta$ sets.
For any topological space, $\sB(X)=\bigcup_{\alpha<\omega_1} \bs^0_\alpha=
\bigcup_{\alpha<\omega_1}\bp^0_\alpha$. A basic fact (see \cite{Kechris})
is that for any uncountable Polish space $X$, all of the classes $\bd^0_\alpha$, $\bs^0_\alpha$, $\bp^0_\alpha$, for $\alpha <\omega_1$,
are  distinct.  We say a set
$A\subseteq X$ is $\bs^0_\alpha$ (resp.\ $\bp^0_\alpha$) {\em hard}
if $A \notin \bp^0_\alpha$ (resp.\ $A\notin \bs^0_\alpha$). This says that $A$ is
``no simpler'' than a $\bs^0_\alpha$ set. We say that $A$ is $\bs^0_\alpha$-{\em complete}
if $A\in \bs^0_\alpha\setminus \bp^0_\alpha$, that is, $A \in \bs^0_\alpha$ and
$A$ is $\bs^0_\alpha$ hard. This says that $A$ is exactly at the complexity level
$\bs^0_\alpha$. Likewise, $A$ is $\bp^0_\alpha$-complete if $A\in \bp^0_\alpha\setminus \bs^0_\alpha$.

The Borel sets lie at the base of another hierarchy, the
{\em projective hierarchy}. The $\bs^1_1$ (or analytic) sets are the images
 of Borel sets (via continuous functions from a Polish space to a Polish space). The
same collection results if one uses Borel images instead of continuous images  and
one can also take just images of closed sets. The $\bp^1_1$ (or co-analytic) sets
are the complements of the analytic sets.

We define $\bd^1_1=\bs^1_1\cap \bp^1_1$.
The pointclasses $\bs^1_1$ and $\bp^1_1$ are closed under countable unions and intersections
and so contain all the Borel sets.
A fundamental result of Suslin (see \cite{Kechris}) says that in any
Polish space $\sB(X)=\bd^1_1$. One can continue to define the
$\bs^1_n$ and $\bp^1_n$ sets for $n \geq 2$, forming the {\em projective hierarchy},
but we will not consider any
classes beyond $\bp^1_1$and $\bs^1_1$ in this paper.

Also, all of the collections
$\bd^0_\alpha$, $\bs^0_\alpha$, $\bp^0_\alpha$, $\bp^1_1$, and $\bs^1_1$ are {\em pointclasses}, that is, they are closed
under inverse images of continuous functions.

The levels of the Borel (and projective) hierarchy can be used
to calibrate the descriptive complexity of a set.
Determining an upper bound on the complexity of the set $A$ generally involves
writing a condition defining $A$
which shows that it is in the appropriate place in the hierarchy.
Establishing lower bounds is generally more difficult.
One technique is the method of ``Wadge
reduction'' whereby a set $C\subseteq Y$ ($Y$ a Polish space)
which is known to be hard for a certain
level $\bG$ of the hierarchy is ``reduced'' to the set $A$. By this we mean
we have a continuous function $f\colon Y\to X$ such that
$C=f^{-1}(A)$. Since $\bG$ is a pointclass, this shows that $A$
is $\bG$-hard.

\subsection{Invariant measures, generic points}
We denote the set of all Borel probability measures on $X$ by $\M(X)$ and equip it with the \weakst\ topology, which in our setting is known to be Polish
\cite[\S17.E]{Kechris}.
Recall that $\mu_n$ converges to $\mu$ in the \weakst\ topology
if and only if for every $f \in C_b(X)$
(i.e., $f$ is a bounded, continuous,
real-valued function on $X$) we have $\int f\, d\mu_n \to \int f\, d\mu$.
If $X$ is compact, then $\M(X)$ is also compact \cite[Theorem 17.22]{Kechris}.
Although our main results are of a topological nature, it is convenient for us to fix once
and for all a particular metric $D$ for $\M(X)$. We choose the \emph{Prohorov metric} associated to our fixed metric $\rho$ on $X$ and given by
\begin{equation}\label{eqn:Prohorov}
D(\mu,\nu)=
\inf\{\eps>0: \mu(A)\le \nu(A^\eps)+\eps\text{ for all Borel sets }A\subset X\}, 
\end{equation}
where $\mu,\nu\in\M(X)$ and $A^\eps$ denotes the $\eps$-neighborhood of $A$, that is,
\[A^\eps=\{y\in X:\rho(x,y)<\eps\text{ for some }x\in A\}.\]
Although \eqref{eqn:Prohorov} does not seem to yield a symmetric function, we do obtain a metric since we consider only Borel \emph{probability} measures.

Given a Polish dynamical system $(X,T)$, we say that $\mu \in \M(X)$ is
\emph{$T$-invariant} if $\mu(T^{-1}(A))=\mu(A)$ for every Borel set $A \subseteq X$.
We let $\MT(X)$ be the set of $T$-invariant Borel probability measures on $X$.
If $X$ is compact, then $\MT(X)$ must be nonempty, but this is no longer true if $X$ is just Polish, even if $T$ exhibits non-trivial recurrence (see \cite[p. 374]{GM}).
A measure $\mu\in\MT(X)$ is \emph{ergodic} if  for every Borel set $A\subseteq X$ satisfying $T^{-1}(A)\subseteq A$ it holds that $\mu(A)$ is either $0$ or $1$. We write $\MTe(X)$ for the set of $T$-invariant ergodic measures. The set $\MT(X)$ is always convex, so $\MTe(X)$ is always non-empty if $\MT(X)\neq\emptyset$.

\begin{defn}
We say that $x \in X$ is a \emph{generic point} for
$\mu\in \MT(X)$ if the sequence of \emph{empirical measures}, whose $n$-th entry is given by
\begin{equation}\label{eqn:empirical}
 \Emp(x,n)= \frac{1}{n}(\delta_{T^0(x)}+\delta_{T^1(x)}+\dots +\delta_{T^{n-1}(x)})\end{equation}
converge in the \weakst\ topology on $\M(X)$ to $\mu$ as $n\to\infty$.
Here $\delta_x$ is the Dirac measure concentrated at $x\in X$.
We let $G_\mu=G_\mu(X,T)$ denote the set of generic points for $\mu$.
\end{defn}
If $\mu$ is ergodic, then it follows from the ergodic theorem that
$\mu$-almost every $x \in X$ is a generic point for $\mu$. We will not, however,
be assuming $\mu$ is necessarily ergodic in our results, and so the
existence of generic points is not automatic.

The notion of generic point can be viewed as generalizing the notion of a normal number.
If we take $X=\base^\omega$, $T$ the shift map on $\base^\omega$, and $\mu$ the standard Bernoulli
$(\frac{1}{\base},\ldots,\frac{1}{\base})$ measure on $\base^\omega$, then $x \in \base^\omega$ is a generic point
if and only if $x$ is the base $\base$ expansion of a normal number.

\section{Specification property and its weakening} \label{sec:spec}
In order to present our modification of the specification property we develop a terminology allowing us to compare various specification-like notions. It is inspired by \cite{KLO}.
Given a Polish dynamical system $(X,T)$ and $x\in X$, we define the \emph{orbit} of $x$ to be the set $O(x)=\{T^n(x):n\in\omega\}$.
Although the orbit is a set, it is customary to consider it as a sequence
$\langle T^n(x):n\in\omega\rangle$. Using this interpretation, and given $a,b\in\omega$ with $a\le b$, we define an \emph{orbit segment} of $x$ from $a$ to $b$ (synonymously, over $[a,b]$) to be the sequence $T^{[a,b]}(x)=\langle
T^a(x),\ldots,T^b(x)\rangle$. Every orbit segment (up to the choice of
$a$ and $b$) is determined by its \emph{initial point} $y=T^a(x)$ and
\emph{length} $n=b-a+1>0$, and hence without loss of generality an orbit segment may
be identified with a pair $\langle y,n\rangle$ in $X\times\omega\setminus\{0\}$. We will use these two formulations concurrently.
\begin{defn}
A  \emph{specification} of \emph{rank} $k>0$ is a finite sequence $\xi$ of $k$
orbit segments, that is, $\xi\in(X\times \omega\setminus\{0\})^k$. Let $\eps>0$. We say
that a point $y\in X$ \emph{$\eps$-close shadows} (or \emph{traces} or
\emph{follows})  a specification $\xi=\langle\langle
x_1,n_1\rangle,\ldots,\langle x_k,n_k\rangle\rangle\in(X\times\omega\setminus\{0\})^k$
if there are integers (called \emph{gaps}) $s_1,\ldots,s_{k-1}\in\omega$ such that
for every $1\le j\le k$ we have
\[
\rho( T^{\sum_{i=1}^{j-1} (n_i+s_i)+t}(y),T^t(x_j))<\eps\quad\text{for }0\le t<n_j.
\]
The number  $L=n_k+\sum_{i=1}^{k-1} (n_i+s_i)$ is the
\emph{number of iterates of $y$ required to trace $\xi$.} We may also say that
the \emph{orbit segment $\langle y,L\rangle$ traces $\eps$-close the specification $\xi$}.
\end{defn}

\begin{defn}
We say that $T$ has the \emph{specification property} if for every
$\eps>0$ there is a constant $N_\eps\in\omega$ such that for each $k>0$
and every specification $\xi\in(X\times\omega\setminus\{0\})^k$ there is a point $y$ which
traces it $\eps$-close with all gaps of size $\leq N_\eps$.
\end{defn}

To state our weakening of the specification property we will need the following notation:
By $\rho_\Lambda(x,y)$ we denote the Bowen distance  between $x,y\in X$ along a finite set
$\Lambda\subseteq \omega$ given by
\[
\rho_\Lambda(x,y)=\max_{j\in\Lambda} \rho(T^j(x),T^j(y)).
\]
The Bowen ball of radius $\eps>0$, along $\Lambda$ and centered at $x\in X$ is defined as
\[
B_\Lambda(x,\eps)=\{y\in X:\rho_\Lambda(x,y)<\eps\}.
\]
When $\Lambda\subseteq \{0,1,\ldots,n-1\}$ for some $n\in\N$ and
$|\Lambda|/n$ is close to $1$, then the Bowen ball $B_\Lambda(x,\eps)$
can be seen as a set of those points $y$ in $X$ whose orbits almost
$\eps$-trace the orbit segment $\langle x,n\rangle$. Here `almost'
means that the number of `errors' that is, those $0\le j<n$ for which
$\rho( T^j(x),T^j(y))\ge \eps$ is small. If
$\Lambda=\{0,1,\ldots,n-1\}$ for some $n\in\N$ then no errors are
allowed and we denote a Bowen ball $B_\Lambda(x,\eps)$ as
$B_n(x,\eps)$.


We will now introduce a weakening of the specification property
for a dynamical system.
Although a huge variety of generalizations of the specification
property are available in the literature \cite{KLO}, we find it convenient to
introduce yet another one. The newly defined property suffices to prove our main results.

\begin{defn}\label{def:faithful2}
We say that a point $y\in X$ \emph{$(\eps,\delta_1,\delta_2)$-approximately-traces a specification $\xi=(\langle x_j,n_j\rangle)_{j=1}^k$} if
\begin{enumerate}
\item $y\in B_{n_1}(x_1,\eps)$,
\item for each $2\le j \le k$ there is a set $\Lambda_{j}\subseteq \{0,\ldots,n_{j}-1\}$
with $|\Lambda_{j}|\ge (1 - \delta_1) n_{j} $ and an
integer $0\le s_{j-1}\le \delta_2 n_{j}$ such that
\[
T^{(\sum_{i=1}^{j-1}s_i+n_i)}(y)\in B_{\Lambda_j}(x_j,\eps).
\]
\end{enumerate}
\end{defn}

\begin{defn} \label{def:saps}
A Polish dynamical system $(X, T )$ has the \emph{strong approximate product
structure} if the following holds: Given $\eps, \delta_1, \delta_2 > 0$,
there exists a nonegative integer $N=N(\eps,\delta_1,\delta_2)$
such that for any specification $\xi=(\langle x_j,n_j\rangle)_{j=1}^k$ with $n_j \ge N$ for $1 \le j \le k$,
there exists a point $y \in X$ that $(\eps,\delta_1,\delta_2)$-approximately-traces $\xi$.
\end{defn}

\begin{rem}\label{rem:N}
We will also say that a point $y$ $\eps$-approximately-traces a specification $\xi$, if it $(\eps, \eps, \eps)$-approximately-traces $\xi$ as in Definition~\ref{def:faithful2}. We define $N(\eps) = N(\eps, \eps, \eps)$, where $N(\cdot,\cdot,\cdot)$ is as in Definition~\ref{def:saps}. 
\end{rem}

Thus, strong approximate product structure implies that, for any $\eps, \delta_1,\delta_2>0$ and any
specification $\xi= \langle (x_1,n_1),$ $ \dots, (x_n,n_k)\rangle $
with all $n_i$ sufficiently large, we can find a point $y$
which $\eps$-traces the first orbit segment $(x_1,n_1)$,
and $\eps$-traces the segments $(x_i,n_i)$ for $i\ge 2$ with at most
$\delta_1 n_i$ errors, allowing gaps of sizes at most $\delta_2 n_i$
between the segments $(x_{i-1}, n_{i-1})$ and $(x_i,n_i)$.

The strong approximate product structure is similar to and implies the \emph{approximate product structure} of Pfister and
Sullivan \cite{PS}. The difference between these two notions is that in Definition~\ref{def:faithful2},
 $\eps$-approximate-tracing allows no errors in the first orbit segment. Below we picture the implications between the strong approximate product structure (SAPS) and specification properties investigated by other authors: the specification property (SP), the weak specification property (WSP), and approximate product structure (APS), see \cite{KLO} for more details:
\begin{gather*}
\textrm{SP}  \Rightarrow
\textrm{WSP} \Rightarrow
\textrm{SAPS} \Rightarrow
\textrm{APS}
\end{gather*}

Although definition of the specification property looks prohibitively restrictive, there are many classes of systems exhibiting a variant of that property.
Familiar examples of symbolic systems (subshifts) that have the specification property are mixing shifts of
finite type and mixing sofic shifts. There are many other symbolic systems with the specification property. All topologically mixing graph maps (in particular, interval maps) have the specification property.
The specification property is closely related with hyperbolicity: every transitive uniformly hyperbolic diffeomorphism of a compact connected manifold has the specification property.
Specification or its variants also holds for systems of algebraic origin. Following Lind \cite{Lind}, we say that a toral automorphism is quasi-hyperbolic if
its associated matrix $A$ has no roots of unity as eigenvalues.
Lind \cite[Theorem (ii)--(iii)]{Lind} and Marcus \cite{Marcus} proved that all quasi-hyperbolic toral automorphisms must satisfy weak specification.
As  the strong approximate product structure implies  the feeble specification property used in \cite{AJKM}, our results also apply to the systems considered in \cite{AJKM}, most notably to $\beta$-shifts.

\section{Sets defined by conditions on statistical behaviour of their orbits}\label{sec:SOI-def}

Generalizing the notion of a generic point, we define $V(x)$ as the set of all limit points in $\M(X)$
of the sequence $(\Emp(x,n))_{n\ge 0}$. Thus if $x\in G_\mu$ then $V(x)=\{\mu\}$, and the converse is true if $X$ is compact.
It is well known that $V(x)$ is a closed connected subset of $\MT(X)$,
which is necessarily nonempty provided that $X$ is compact.

Given a set $V\subseteq \MT(X)$, we can relate it to $V(x)$ and define sets of points, whose orbits exhibit particular statistical behaviour.

\begin{defn}
If $V\subseteq \M(X)$, we let $G^V=G^V(X,T)=\{ x \in X \colon V\subseteq V(x) \}$.
Similarly, let $G_V=G_V(X,T)=\{ x \in X \colon V(x) \subseteq V\}$.
We set  $G(V)= G^V \cap G_V$. Finally, we let $\prescript{V}{} G =
\{ x \in X \colon V(x)\cap V \neq \emptyset\}$.
\end{defn}

We refer to the set $G(V)$ as the set of \emph{generic
points for $V$}. We caution the reader that in the non-compact case
it is not necessarily true that $G_\mu=G(\{ \mu\})$ (see Remark~\ref{nts}).

\begin{defn}
Given $\phi\in\mathcal{C}_b(X)$, $k>0$ and $y\in X$ we define \emph{$k$-th ergodic average of $\phi$ along the orbit of $y$} by
\begin{equation}\label{eqn:Ak}
A_k\phi(y)=\frac{1}{k}\sum_{j=0}^{k-1} \phi(T^j(y))=\int \phi\, d \! \Emp(y,k).
\end{equation}
\end{defn}
\begin{defn}
Given $\phi\in\mathcal{C}_b(X)$
we say that a point $x\in X$ is \emph{Birkhoff regular} for $\phi$
if the sequence of ergodic averages $A_k\phi(x)$ converges as $k\to\infty$. For $\alpha\in\R$
we define the \emph{$\alpha$-level sets for Birkhoff averages} (\emph{$\alpha$-Birkoff regular points}) of $\phi$ by
\[
B_{\phi}(\alpha)=\{x\in X: \lim_{k\to\infty} A_k\phi(x)=\alpha\}.
\]
The set of all Birkhoff regular points for $\phi$ is denoted
\[
B_\phi(X,T)=\bigcup_{\alpha\in\R}B_\phi(\alpha).
\]
\end{defn}
\begin{defn}
The \emph{irregular set for $\phi$} is the set $I_\phi(X,T)$ consisting
of all points $y\in X$ such that the sequence of ergodic averages
$(A_k\phi(y))_{k\in\N}$ does not converge.
The \emph{irregular set} $I(X, T)$
of the Polish dynamical system $(X,T)$ is the union of $I_\phi(X,T)$
when $\phi$ runs over all bounded continuous real-valued functions on
$X$.
\end{defn}

Thus, for each $\phi\in\cC_b(X)$ we have a partition
\[
X=I_\phi(X,T)\cup \bigcup_{\alpha\in\R}B_\phi(\alpha).
\]
By the Birkhoff ergodic theorem,
the irregular set is
universally null:
$\mu(I_\phi(X,T))=\mu(I(X, T))=0$ for every
$T$-invariant Borel probability measure $\mu$ on $X$ and $\phi\in C_b(X)$.  The
complement of the irregular set, denoted $Q(X,T)$, is called the
\emph{quasi-regular set} of $(X,T)$.
If $X$ is compact, then it is easy to see that every quasi-regular point must be also generic for some $T$-invariant measure (cf. \cite{Oxtoby}).
It seems to be a little known fact that each quasi-regular point must be generic for some $T$-invariant measure also in the general situation, that is, for every Polish metric space.
For example, Oxtoby in \cite[Section 7]{Oxtoby} (following Fomin) adds an additional condition to the definition of quasi-regular point for a Borel automorphism to get a similar result.

\begin{fact}\label{fact:Q=G}
We have $Q(X,T)=\bigcup_{\mu \in \MT(X)} G_\mu$.
\end{fact}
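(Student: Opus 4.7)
The inclusion $\bigcup_{\mu \in \MT(X)} G_\mu \subseteq Q(X,T)$ is immediate from the definitions: if $\Emp(x,n) \to \mu$ in the weak-* topology, then $A_n\phi(x) = \int \phi \, d\Emp(x,n) \to \int \phi \, d\mu$ for every $\phi \in C_b(X)$, whence $x \in Q(X,T)$. For the reverse inclusion, fix $x \in Q(X,T)$ and define $L\colon C_b(X) \to \R$ by $L(\phi) = \lim_n A_n\phi(x)$. Then $L$ is a positive linear functional with $L(\mathds{1}_X) = 1$, and the telescoping identity
\[
A_n(\phi \circ T)(x) - A_n\phi(x) = \frac{\phi(T^n x) - \phi(x)}{n},
\]
whose right-hand side has absolute value at most $2\|\phi\|_\infty/n$, gives $L(\phi \circ T) = L(\phi)$. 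It therefore suffices to realise $L$ as $L(\phi) = \int \phi \, d\mu$ for some $\mu \in \M(X)$: the $T$-invariance of $\mu$ will then follow from that of $L$, and the defining equation of $L$ rewrites as $\Emp(x,n) \to \mu$ in the weak-* topology, i.e.\ $x \in G_\mu$.

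The crucial step is to show that the sequence $(\Emp(x,n))_{n \ge 1}$ is tight in $\M(X)$. Once tightness is established, Prokhorov's theorem supplies weak-* subsequential limits in $\M(X)$; every such limit $\mu$ satisfies $\int \phi \, d\mu = L(\phi)$ for every $\phi \in C_b(X)$, and since $C_b(X)$ separates Borel probability measures on the Polish space $X$, $\mu$ is unique, so the whole sequence converges to it. Tightness is proved by contradiction: if it were to fail, there would exist $\delta > 0$, an increasing chain of compact sets $K_1 \subseteq K_2 \subseteq \cdots$ in $X$, and times $n_1 < n_2 < \cdots$ with $\Emp(x,n_j)(X \setminus K_j) \ge \delta$ for every $j$. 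After suitable thinning, one selects Urysohn-type bumps $\phi_j \in C_b(X)$ supported in pairwise disjoint neighbourhoods of those orbit points $T^k x$ (with $k < n_j$) that escape $K_{j-1}$, and assembles $\phi = \sum_j \eps_j \phi_j \in C_b(X)$ with signs $\eps_j \in \{-1,+1\}$ chosen so that the values $A_{n_j}\phi(x)$ oscillate by a definite amount, contradicting $x \in Q(X,T)$.

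The main technical obstacle is the construction of the oscillating test function $\phi$: the bumps $\phi_j$ must be positioned and scaled so that (i) their contributions to $A_{n_j}\phi(x)$ do not cancel each other, and (ii) the series $\sum_j \eps_j \phi_j$ defines a genuinely bounded continuous function on $X$. Both points are handled by standard devices: a sparse selection of subsequences to separate the bump supports along the orbit, together with geometric shrinking of the support diameters (Urysohn-type constructions being unproblematic in the metric setting).
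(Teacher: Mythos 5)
Your overall architecture is sound and genuinely different from the paper's: you reduce the hard inclusion to uniform tightness of $(\Emp(x,n))_{n\ge 1}$ and then apply Prohorov's theorem, whereas the paper sidesteps tightness entirely by quoting the weak sequential completeness of the space of Borel measures on a metric space (\cite[Theorem 8.7.1]{Bogachev}), which hands over the limit measure in one line. Everything surrounding your tightness claim (the functional $L$, the telescoping identity giving $T$-invariance, uniqueness of subsequential limits) is correct. The gap is in the tightness argument itself, in two places.

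First, the ``pairwise disjoint neighbourhoods'' cannot be produced by thinning subsequences and shrinking support diameters. Since $X$ need not be locally compact, the compacts $K_j$ (chosen before the later escaping points are known) do not prevent the escaping points of stage $j'$ from accumulating at an escaping point $p$ of stage $j<j'$; once that happens, any continuous bump equal to $1$ at $p$ meets infinitely many later escaping sets, no matter how small its support. The correct device is the total-boundedness characterization of uniform tightness (valid because $\rho$ is complete): failure of tightness yields $\eps_0,r_0>0$ such that for every \emph{finite} $F\subseteq X$ some $\Emp(x,n)$ puts mass at least $\eps_0$ outside the open $r_0$-neighbourhood of $F$. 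Taking $F_j\supseteq\{T^kx\colon k<n_{j-1}\}$ adaptively makes the escaping sets $P_j$ pairwise $r_0$-separated and $r_0$-far from the initial orbit segment of length $n_{j-1}$, which is what disjointness of the supports (and continuity of the sum) actually requires. Second, a prescribed alternation of signs does not force oscillation: with disjoint bumps one gets $A_{n_j}\phi(x)=a_j+\eps_j A_{n_j}\phi_j(x)$, where $A_{n_j}\phi_j(x)\ge\eps_0$ but the background term $a_j=\sum_{i<j}\eps_i A_{n_j}\phi_i(x)$ is not under your control and can have absolute value up to $1-\eps_0$, so for $\eps_0\le 1/2$ the estimates for $\eps_j=+1$ and $\eps_j=-1$ overlap and no definite oscillation results. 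This is repairable: since later bumps miss the first $n_j$ orbit points, $a_j$ is determined by $\eps_1,\dots,\eps_{j-1}$, and one can choose $\eps_j$ \emph{greedily} so that $|A_{n_j}\phi(x)-A_{n_{j-1}}\phi(x)|\ge\eps_0$, contradicting convergence. (Alternatively, additivity of $A\mapsto L(\phi_A)$ over disjoint index sets $A$ plus $L(\phi_A)\ge\eps_0$ for infinite $A$ gives a pigeonhole contradiction with $L(\phi_{\N})\le 1$.) As written, though, the key step of your proof does not go through.
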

\begin{proof}
Note that if $x$ is generic for $\mu$, then the definition of \weakst \! convergence and \eqref{eqn:Ak} imply that
$A_k\phi(y)\to\int\phi\, d \mu$ as $k\to\infty$, so every generic point is $\alpha(\phi)$-Birkhoff regular for
every $\phi\in\mathcal{C}_b(X)$ with $\alpha(\phi)=\int\phi\, d \mu$. In particular, each generic point is quasi-regular.
On the other hand, the space of Borel measures on $X$ is weakly sequentially complete by \cite[Theorem 8.7.1]{Bogachev}.
Here, weak sequential completeness means that every fundamental sequence converges (cf. \cite[Section 8.1]{Bogachev}). A sequence $(\mu_n)_{n\ge 1}$ of Borel probability measures is fundamental if for every bounded continuous function $f$, the sequence $\int f \text{d}\mu_n$ is Cauchy. Therefore if $x$ is quasi-regular, then the sequence $(\Emp(x,n))_{n\ge 1}$ is fundamental and so converges to a Borel probability measure $\mu$ on $X$, which is easily seen to be $T$-invariant.
\end{proof}


Note that for all Polish dynamical systems $(X,T)$,
for each $\mu\in\MT(X)$, and for each $\phi\in\mathcal{C}_b(X)$ we have
\[
G_\mu\subseteq Q(X,T) \subseteq B_\phi(X,T)
\quad\text{and}
\quad
I_\phi(X,T)\subseteq I(X,T).
\]

\section{Upper bounds} \label{sec:ub}
We will study which sets defined by placing conditions on the statistical behaviour of orbits are Borel
(assuming that $X$ is compact if necessary). Recall that $(X,T)$ is a Polish dynamical system and
$(f_n)$ is a sequence of bounded continuous real-valued functions on $X$
which generate the \weakst\ topology on $\M(X)$ (cf.\ Theorem 17.19 of \cite{Kechris}).
That is, $\mu_k \to \mu$ as $k\to\infty$ if and only if for each $n$ we have $\int f_n\, d\mu_k \to \int f_n\, d\mu$ as $k\to\infty$.
Recall also that $A_k\phi$ stands for the $k$-th ergodic average of $\phi$ along the orbit of $y$, see \eqref{eqn:Ak}.

\begin{fact} \label{upper:Bfi}
If $\phi\in\mathcal{C}_b(X)$ and $\alpha\in\R$, then the set
$B_\phi(\alpha)=\{x\in X:\lim_{k\to\infty}A_k\phi(x)=\alpha\}$ of $\alpha$-Birkhoff regular points is a $\bP^0_3$ set.
\end{fact}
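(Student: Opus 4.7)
The plan is to compute the descriptive complexity directly from the definition of convergence, being careful to use a formulation with closed sets at the innermost layer so the counting of quantifiers lands at $\bp^0_3$ rather than $\bp^0_4$.

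First I would observe that for each fixed $k$ the map $x \mapsto A_k\phi(x) = \tfrac{1}{k}\sum_{j=0}^{k-1}\phi(T^j(x))$ is continuous on $X$, since it is a finite sum of continuous functions (compositions of $\phi$ with iterates of $T$). Consequently, for each $k,n \in \N$, the set
\[
C_{n,k} = \bigl\{ x \in X : |A_k\phi(x) - \alpha| \le 1/n \bigr\}
\]
is the preimage of the closed set $[0,1/n]$ under a continuous function, hence closed, i.e., $\bp^0_1$.

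Next I would rewrite the convergence condition $\lim_{k\to\infty} A_k\phi(x) = \alpha$ in its standard $\forall\exists\forall$ form with a non-strict inequality:
\[
B_\phi(\alpha) = \bigcap_{n=1}^\infty \bigcup_{K=1}^\infty \bigcap_{k=K}^\infty C_{n,k}.
\]
For each fixed $n,K$, the set $\bigcap_{k \ge K} C_{n,k}$ is an intersection of closed sets, hence closed ($\bp^0_1$). Taking the countable union over $K$ then yields an $F_\sigma$ set, i.e., an element of $\bs^0_2$. Finally, taking the countable intersection over $n$ puts $B_\phi(\alpha)$ in $\bp^0_3$, as required.

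There is essentially no obstacle here; the only point requiring a small amount of care is the choice to use the non-strict inequality $|A_k\phi(x)-\alpha|\le 1/n$ (giving closed $C_{n,k}$) rather than the strict version (which would give open sets and lead to the less tight bound $\bp^0_4$). The equivalence of the two formulations of the $\eps$-$K$ definition of a limit makes the closed formulation costless.
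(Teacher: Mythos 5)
Your proof is correct and is essentially the paper's own argument: the paper writes exactly the decomposition $B_\phi(\alpha)=\bigcap_{m}\bigcup_{K}\bigcap_{k\ge K}\{y: |A_k\phi(y)-\alpha|\le 1/m\}$ and reads off the $\bP^0_3$ bound from the continuity of $A_k\phi$, just as you do. Your added remark about preferring the non-strict inequality (so the innermost sets are closed) is a sensible explanation of why the count stops at the third level.
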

\begin{proof}
It is easy to see that
\[
B_\phi(\alpha)=\bigcap_{m=1}^\infty\bigcup_{K=1}^\infty\bigcap_{k=K}^\infty
\{y\in X\colon | A_k\phi(y)-\alpha|\le 1/m\}. \qedhere
\]
\end{proof}

\begin{fact} \label{upper:Gmu}
If $\mu\in\MT(X)$, then the set $G_\mu$
of $\mu$-generic points is a $\bP^0_3$ set.
\end{fact}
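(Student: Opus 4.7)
The plan is to reduce this immediately to Fact~\ref{upper:Bfi} by using the fact that weak$^*$ convergence in $\M(X)$ can be tested against a countable family of observables.

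First I would recall that, by \cite[Theorem 17.19]{Kechris}, there is a countable sequence $(f_n)_{n\in\N}$ in $\mathcal{C}_b(X)$ such that $\mu_k\to\mu$ in the weak$^*$ topology if and only if $\int f_n\, d\mu_k\to\int f_n\, d\mu$ for every $n$. Applied to the sequence of empirical measures $\Emp(x,k)$, and using the identity $\int f_n\, d\Emp(x,k)=A_kf_n(x)$ from \eqref{eqn:Ak}, this gives
\[
x\in G_\mu\iff\forall n\in\N\ \lim_{k\to\infty}A_kf_n(x)=\alpha_n,\qquad\text{where }\alpha_n:=\int f_n\, d\mu.
\]
Therefore
\[
G_\mu=\bigcap_{n\in\N}B_{f_n}(\alpha_n).
\]

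Next I would invoke Fact~\ref{upper:Bfi}, which says that each $B_{f_n}(\alpha_n)$ is $\bP^0_3$. Since the pointclass $\bP^0_3$ is closed under countable intersections, the displayed intersection is again $\bP^0_3$, and the fact follows. The only substantive input is the existence of a countable generating family for the weak$^*$ topology, and everything else is a routine unpacking of definitions — there is no real obstacle here, since the hard work (the $\bP^0_3$ upper bound on a single Birkhoff level set) has already been done in Fact~\ref{upper:Bfi}.
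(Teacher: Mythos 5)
Your argument is exactly the paper's proof: the paper also fixes a countable generating family $(f_n)$ for the weak$^*$ topology (citing Theorem 17.19 of \cite{Kechris}), writes $G_\mu=\bigcap_n B_{f_n}(c_n)$ with $c_n=\int f_n\,d\mu$, and applies Fact~\ref{upper:Bfi} together with closure of $\bP^0_3$ under countable intersections. No differences worth noting.
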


\begin{proof}
A point $y\in X$ belongs to $G_\mu$ if and only if for every $n$ the sequence
$A_k f_n(y)$ converges to $c_n=\int f_n\,d\mu$ as $k\to\infty$. Thus
\[
G_\mu=\bigcap_{n=0}^\infty B_{f_n}(c_n).
\]
By Fact \ref{upper:Bfi},  $G_\mu$ is a $\bP^0_3$ set.
\end{proof}

\begin{fact} \label{upper:irregular-phi}
If $\phi\in\mathcal{C}_b(X)$, then the irregular set $I_\phi(X,T)$ is a $\bS^0_3$ set
and $B_\phi(X,T)=X\setminus I_\phi(X,T)$ is a $\bP^0_3$ set.
\end{fact}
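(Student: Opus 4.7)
The plan is to characterize Birkhoff regularity via the Cauchy criterion, which avoids having to quantify over the (uncountably many) candidate limits $\alpha\in\R$. A point $y\in X$ lies in $B_\phi(X,T)$ if and only if the sequence $(A_k\phi(y))_{k\ge 1}$ converges in $\R$, which, since $\R$ is complete, holds if and only if the sequence is Cauchy.

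Concretely, I would write
\[
B_\phi(X,T)=\bigcap_{m=1}^{\infty}\bigcup_{K=1}^{\infty}\bigcap_{k,\ell\ge K}\bigl\{y\in X:|A_k\phi(y)-A_\ell\phi(y)|\le 1/m\bigr\}.
\]
Since $\phi\in\mathcal{C}_b(X)$ and $T$ is continuous, each composition $\phi\circ T^j$ is continuous, hence the finite average $A_k\phi$ is continuous on $X$; therefore each set $\{y:|A_k\phi(y)-A_\ell\phi(y)|\le 1/m\}$ is closed. Reading the expression from the inside out: the inner doubly-indexed intersection is a countable intersection of closed sets, hence closed ($\bP^0_1$); the union over $K$ is then $\bS^0_2$ (an $F_\sigma$); and the outer intersection over $m$ is $\bP^0_3$. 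This shows $B_\phi(X,T)\in\bP^0_3$.

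Taking complements, $I_\phi(X,T)=X\setminus B_\phi(X,T)\in\bS^0_3$, and since the classes $\bP^0_3$ and $\bS^0_3$ are dual, the two statements of the fact are equivalent. No serious obstacle arises here — the only point worth noting is that reducing convergence to the Cauchy condition is essential, because writing $B_\phi(X,T)$ directly as $\bigcup_{\alpha\in\R}B_\phi(\alpha)$ would express it as an uncountable union of $\bP^0_3$ sets from Fact~\ref{upper:Bfi}, which need not be Borel; the Cauchy reformulation replaces this uncountable union by a countable intersection of $F_\sigma$ sets, yielding the desired $\bP^0_3$ bound.
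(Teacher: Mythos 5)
Your proof is correct and is essentially the paper's argument viewed from the complement side: the paper writes $I_\phi(X,T)$ directly as $\bigcup_{N}\bigcap_{K}\bigcup_{k,\ell\ge K}\{x: A_k\phi(x)+1/N<A_\ell\phi(x)\}$, a countable union over $N$ of $\bP^0_2$ sets built from open sets, using that a bounded real sequence fails to converge exactly when its liminf and limsup are separated by some $1/N$ — which is precisely the negation of your Cauchy condition, legitimate here since $\phi$ is bounded. The quantifier structure and the continuity of each $A_k\phi$ are used identically in both versions, so your Cauchy-side presentation yields the same $\bP^0_3$/$\bS^0_3$ bounds.
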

\begin{proof} A point $x\in X$ is irregular for $\phi$ if and only if 
\[
\liminf_{k\to\infty}A_k\phi(x)<\limsup_{k\to\infty}A_k\phi(x).
\]
Therefore
\[
I_\phi(X,T)=\bigcup_{N=1}^\infty \bigcap_{K=1}^\infty \bigcup_{k= K}^\infty
\bigcup_{\ell= K}^\infty \{x\in X\colon A_k\phi(x)+1/N<A_\ell \phi(x)\}.
\]
By continuity, the set $\{x\in X\colon A_k\phi(x)+1/N<A_\ell \phi(x)\}$ is open in $X$ for any choice of $\phi$ and $k,\ell,m$. It follows that
$I_\phi(X,T)$ is a $\bS^0_3$ set.
\end{proof}

\begin{fact} \label{upper:irregular}
The irregular set $I(X,T)$ is a $\bS^0_3$ set and the
quasi-regular set $Q(X,T)=X\setminus I(X,T)$ is a $\bP^0_3$ set.
\end{fact}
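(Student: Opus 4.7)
The plan is to reduce the seemingly uncountable definition of $I(X,T)$ to a countable operation using the fixed sequence $(f_n)$ that generates the weak$^*$ topology on $\M(X)$. The key identity I would aim to establish is
\[
Q(X,T)=\bigcap_{n=0}^{\infty} B_{f_n}(X,T).
\]
Once this is in hand, the conclusion is immediate: each $B_{f_n}(X,T)$ is $\bp^0_3$ by Fact~\ref{upper:irregular-phi}, a countable intersection of $\bp^0_3$ sets is $\bp^0_3$, and complementation gives $I(X,T)\in\bs^0_3$.

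For the nontrivial inclusion $\supseteq$ of the displayed identity, I would argue as follows. Fix $x\in\bigcap_n B_{f_n}(X,T)$, so that $A_k f_n(x)$ converges as $k\to\infty$ for every $n$. Writing the Birkhoff averages as integrals against empirical measures via \eqref{eqn:Ak}, this says that $\int f_n\, d\Emp(x,k)$ is Cauchy in $\R$ for each $n$. Since $(f_n)$ generates the weak$^*$ topology, the sequence $(\Emp(x,k))_{k\ge 1}$ of Borel probability measures is weakly fundamental in the sense of \cite[Section 8.1]{Bogachev}. Invoking weak sequential completeness of the space of Borel probability measures (\cite[Theorem 8.7.1]{Bogachev}), exactly as in the proof of Fact~\ref{fact:Q=G}, the sequence $\Emp(x,k)$ converges weak$^*$ to some Borel probability measure $\mu$. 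But then $A_k\phi(x)=\int\phi\, d\Emp(x,k)\to\int\phi\, d\mu$ for every $\phi\in\mathcal{C}_b(X)$, so $x\in B_\phi(X,T)$ for every such $\phi$, i.e., $x\in Q(X,T)$. The reverse inclusion is trivial, since if $x$ is Birkhoff regular for every bounded continuous function then in particular it is regular for each $f_n$.

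The main obstacle is the $\supseteq$ direction, which needs the weak sequential completeness of $\M(X)$; once that is invoked the computation is routine. Writing it all out:

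\begin{proof}
By the definition of the irregular set and Fact~\ref{fact:Q=G}, $x\in Q(X,T)$ if and only if $A_k\phi(x)$ converges as $k\to\infty$ for every $\phi\in\mathcal{C}_b(X)$. Trivially this forces $x\in\bigcap_{n} B_{f_n}(X,T)$. Conversely, suppose $x\in\bigcap_{n} B_{f_n}(X,T)$, so $\int f_n\, d\Emp(x,k)=A_k f_n(x)$ converges in $\R$ for each $n$. Since $(f_n)$ generates the weak$^*$ topology on $\M(X)$, the sequence $(\Emp(x,k))_{k\ge 1}$ is fundamental in the sense of \cite[Section 8.1]{Bogachev}. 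By weak sequential completeness of the space of Borel probability measures \cite[Theorem 8.7.1]{Bogachev}, there is $\mu\in\M(X)$ with $\Emp(x,k)\to\mu$ in the weak$^*$ topology. Hence $A_k\phi(x)\to\int\phi\, d\mu$ for every $\phi\in\mathcal{C}_b(X)$, so $x\in Q(X,T)$. Therefore
\[
Q(X,T)=\bigcap_{n=0}^{\infty} B_{f_n}(X,T).
\]
By Fact~\ref{upper:irregular-phi} each $B_{f_n}(X,T)$ is $\bp^0_3$, so $Q(X,T)$ is $\bp^0_3$ and $I(X,T)=X\setminus Q(X,T)$ is $\bs^0_3$.
\end{proof}
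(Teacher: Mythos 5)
There is a genuine gap, and it sits exactly at the step you flag as the crux. From $x\in\bigcap_n B_{f_n}(X,T)$ you know only that $\int f_n\,d\Emp(x,k)$ is Cauchy for each $n$ in the fixed countable generating family, and you then assert that ``since $(f_n)$ generates the weak$^*$ topology, the sequence $(\Emp(x,k))_k$ is fundamental'' in the sense of Bogachev, i.e.\ that $\int \phi\,d\Emp(x,k)$ is Cauchy for \emph{every} $\phi\in\mathcal{C}_b(X)$. That inference is a non sequitur: the generating property of $(f_n)$ only says that convergence of the integrals against the $f_n$ characterizes convergence to a limit that is already known to lie in $\M(X)$; it does not upgrade Cauchyness along a countable family to Cauchyness against all of $C_b(X)$ when no limit probability measure is available. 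In fact the identity $Q(X,T)=\bigcap_n B_{f_n}(X,T)$ that your proof rests on is false in general for non-compact $X$, because mass can escape to infinity: take $X=\omega$ with the discrete metric, $T(n)=n+1$, and $x=0$, so that $\Emp(x,k)$ is uniform on $\{0,\dots,k-1\}$. For a standard generating family as in \cite[Theorem 17.19]{Kechris} (functions built from the metric and a countable dense set, hence constant or supported on finite sets here), every $A_k f_n(x)$ converges, yet $x$ is irregular for the bounded continuous function $\charst_{\bigcup_j[4^j,\,2\cdot 4^j)}$, so $x\notin Q(X,T)$. Your argument would be fine if $X$ were compact (then the empirical sequence is automatically tight, and since the $f_n$ separate points of $\M(X)$ all subsequential limits coincide), but Fact~\ref{upper:irregular} is stated for general Polish $X$, and the whole difficulty is precisely the possible loss of tightness.

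The paper avoids this by not working with a countable family of observables at all: by Fact~\ref{fact:Q=G}, $x\in Q(X,T)$ iff $(\Emp(x,n))_n$ converges in $\M(X)$, which (because the Prohorov metric $D$ built from the complete metric $\rho$ is complete on $\M(X)$) is equivalent to $(\Emp(x,n))_n$ being $D$-Cauchy. This yields
\[
Q(X,T)=\bigcap_{m=1}^\infty\bigcup_{N=1}^\infty\bigcap_{k\ge N}\bigcap_{\ell\ge N}\{x\in X\colon D(\Emp(x,k),\Emp(x,\ell))\le 1/m\},
\]
a $\bp^0_3$ formula with closed innermost sets, from which $I(X,T)\in\bs^0_3$ follows. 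The key point your proposal is missing is some device, such as Cauchyness in a complete metric for the weak$^*$ topology (or an a priori tightness assumption), that rules out escape of mass; convergence of Birkhoff averages for countably many observables does not do this.
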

\begin{proof}
Note that by Fact \ref{fact:Q=G} we have $x\in Q(X,T)$ if and only if $x$ is generic for some $\mu\in\MT(X)$. The latter is equivalent to
the sequence $(\Emp(x,n))_{n\ge 1}$ being a Cauchy sequence in the Prohorov metric $D$ (starting with the complete metric $\rho$ on $X$ we obtain a complete metric $D$ on $\M(X)$, see \cite[Cor. 18.6.3]{Garling}). Hence
\[
Q(X,T)=\bigcap_{m=1}^\infty\bigcup_{N=1}^\infty \bigcap_{k= N}^\infty
\bigcap_{\ell= N}^\infty \{ x \in X \colon D(\Emp(x, k), \Emp(x,\ell))  \le 1/m \}.
\]
Note that for any $n \geq 1$, the map $x \mapsto \Emp(x, n)$ is continuous. Thus the set $\{ x \in X \colon D(\Emp(x, k), \Emp(x,\ell))  \le 1/m \}$ is closed for any choice of the parameters $k,\ell$ and $m$. Therefore $Q(X,T)$ is a $\bP^0_3$ set.
\end{proof}


\begin{fact} \label{upper:GW}
If $V\subseteq \MT(X)$ is closed, then
$G^V=\{x\in X\colon  V\subseteq V(x)\}$ is a $\bP^0_2$ set.
\end{fact}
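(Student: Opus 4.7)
The plan is to express $G^V$ as a countable intersection of $G_\delta$ sets. Since $V(x)$ is a closed subset of the Polish space $\M(X)$ (this is noted in \S\ref{sec:SOI-def}), and since $V$, being a subspace of the Polish space $\M(X)$, is separable, I would first fix a countable dense subset $\{\mu_i\}_{i\in\N}$ of $V$. Then the condition $V\subseteq V(x)$ is equivalent to $\mu_i\in V(x)$ for every $i\in\N$: the forward direction is trivial, and the reverse direction follows because $V(x)$ is closed in $\M(X)$ and contains a dense subset of $V$, hence contains $V$.

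Next I would unpack the condition $\mu_i\in V(x)$. Recall that $V(x)$ is the set of \weakst{} limit points of the sequence $\pr{\Emp(x,n)}_{n\ge 1}$, so $\mu_i\in V(x)$ means that in the Prohorov metric $D$,
\[
\forall m\in\N\ \forall N\in\N\ \exists n\ge N\colon D(\Emp(x,n),\mu_i)<1/m.
\]
For every fixed $n$, the map $x\mapsto \Emp(x,n)$ is continuous (it is a normalized sum of Dirac masses at continuous images of $x$), so the set
\[
U(i,m,n)=\br{x\in X\colon D(\Emp(x,n),\mu_i)<1/m}
\]
is open in $X$.

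Putting these observations together, I obtain
\[
G^V=\bigcap_{i\in\N}\bigcap_{m\in\N}\bigcap_{N\in\N}\bigcup_{n\ge N}U(i,m,n),
\]
which is a countable intersection of open sets, hence a $\bP^0_2$ set. I do not expect any real obstacle: the only subtlety is the density reduction, which uses that $V(x)$ is closed; the continuity of the empirical measure map and the definition of the Prohorov metric handle the rest.
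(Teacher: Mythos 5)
Your proof is correct and follows essentially the same route as the paper: fix a countable dense subset of $V$, reduce $V\subseteq V(x)$ to membership of each dense point in $V(x)$ using that $V(x)$ is closed, unwind the limit-point condition with the Prohorov metric, and use continuity of $x\mapsto\Emp(x,n)$ to see the resulting countable intersection is $G_\delta$. No gaps.
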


\begin{proof}
Let $V_0$ be a countable dense subset of $V$. Then
\[
G^V = \bigcap_{\nu \in V_0} G^{\{ \nu \} } =
\bigcap_{\nu \in V_0}
\bigcap_{N = 1}^\infty
\bigcap_{m = 1}^\infty
\bigcup_{n = N}^\infty
\{ x \in X \colon D(\Emp(x, n), \nu)  < 1/m \}.
\]
Note that for any $n \geq 1$, the map $x \mapsto \Emp(x, n)$ is continuous. Thus the set $\{ x \in X \colon D(\Emp(x, n), \nu)  < 1/m \}$ is open for any choice of the parameters $n$ and $\nu$. Thus $G^V$ is a $\bP^0_2$ set.
\end{proof}

Recall that a point $x \in X$ is said to have \emph{maximal oscillation}, if $V(x) = \MT(X)$ \cite[Def. 21.17]{DGS}.
Taking $V = \MT(X)$ in Fact \ref{upper:GW}, we immediately obtain the following corollary:

\begin{fact}
The set of points with maximal oscillation is a $\bP^0_2$ set.
\end{fact}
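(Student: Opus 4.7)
The plan is to deduce this statement as a one-line application of Fact~\ref{upper:GW}. By definition, $x \in X$ has maximal oscillation precisely when $V(x) = \MT(X)$. Since $V(x) \subseteq \MT(X)$ always holds (any weak$^*$ limit in $\M(X)$ of the empirical measures $\Emp(x,n)$ is automatically $T$-invariant, because the difference $\Emp(x,n) - \Emp(x,n)\circ T^{-1}$ has total variation at most $2/n$, and the map $\mu \mapsto \mu \circ T^{-1}$ is weak$^*$-continuous on $\M(X)$ by continuity of $T$), the condition $V(x)=\MT(X)$ reduces to the one-sided inclusion $\MT(X) \subseteq V(x)$, which is exactly the defining condition for $x \in G^{\MT(X)}$.

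The next step is to check that the hypothesis of Fact~\ref{upper:GW} is satisfied for $V = \MT(X)$. This amounts to noting that $\MT(X)$ is a closed subset of $\M(X)$: for each $f \in C_b(X)$, the map $\mu \mapsto \int f \, d\mu - \int f \circ T \, d\mu$ is weak$^*$-continuous, and $\MT(X)$ is the intersection of its zero sets. Being a closed subset of the Polish space $\M(X)$, it is itself separable, which provides the countable dense subset $V_0$ needed in the proof of Fact~\ref{upper:GW}.

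With these observations, the conclusion is immediate: the set of points of maximal oscillation equals $G^{\MT(X)}$, and Fact~\ref{upper:GW} places this set in $\bP^0_2$. There is no genuine obstacle here; the only thing to highlight is the verification that $V(x) \subseteq \MT(X)$ (so that we may pass from the equality $V(x)=\MT(X)$ to the single inclusion handled by Fact~\ref{upper:GW}) and the closedness of $\MT(X)$ in $\M(X)$. Both are standard, hold under our hypothesis that $T$ is continuous on a Polish space, and require no compactness assumption on $X$.
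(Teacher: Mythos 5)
Your proof is correct and follows the paper's own route: the paper likewise obtains this as an immediate corollary of Fact~\ref{upper:GW} with $V=\MT(X)$, using that $V(x)\subseteq\MT(X)$ always holds so that maximal oscillation reduces to $x\in G^{\MT(X)}$. Your extra verifications (invariance of weak$^*$ limits of empirical measures and closedness of $\MT(X)$ in $\M(X)$) are facts the paper takes as known, and they are accurate.
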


%
%
%

\begin{fact}\label{fact:UG-analytic}If $U\subseteq \MT(X)$ is open, then the set
$\prescript{U}{}G$ is analytic (is $\bs^1_1$).
\end{fact}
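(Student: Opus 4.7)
The strategy is to realise $\prescript{U}{}G$ as the projection of a Borel subset of the Polish space $X\times\M(X)$, and then invoke the basic fact that continuous (in particular, projection) images of Borel sets are analytic. Concretely, set
\[
A=\{(x,\mu)\in X\times\M(X) \colon \mu\in U\text{ and }\mu\in V(x)\},
\]
so that $\prescript{U}{}G=\pi_X(A)$, where $\pi_X\colon X\times\M(X)\to X$ is the projection onto the first coordinate. Since $\M(X)$ is Polish and $\pi_X$ is continuous, once we verify that $A$ is Borel we are done.

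The slice $\{(x,\mu) : \mu\in U\}=X\times U$ is open. For the condition $\mu\in V(x)$, the point is that $\mu$ is a limit point of $(\Emp(x,n))_{n\ge 1}$ if and only if every Prohorov-ball around $\mu$ contains $\Emp(x,n)$ for infinitely many $n$. Hence
\[
\{(x,\mu) : \mu\in V(x)\}=\bigcap_{m=1}^\infty\bigcap_{N=1}^\infty\bigcup_{n=N}^\infty\{(x,\mu)\in X\times\M(X) : D(\Emp(x,n),\mu)<1/m\}.
\]
For each fixed $m,n$ the map $(x,\mu)\mapsto D(\Emp(x,n),\mu)$ is continuous, because $x\mapsto \Emp(x,n)$ is continuous (a finite sum of continuous maps $x\mapsto \delta_{T^{j}(x)}$ into $\M(X)$) and $D$ is continuous in both arguments. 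Thus each set in the union is open, so the whole expression is $\bp^0_2$, and therefore Borel. Intersecting with the open set $X\times U$ keeps $A$ Borel.

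Finally, $\prescript{U}{}G=\pi_X(A)$ is the continuous image of a Borel set in a Polish space, and hence analytic, i.e.\ $\bs^1_1$. The only slightly delicate point worth double-checking is the continuity of $x\mapsto \Emp(x,n)$ in the \weakst\ topology on $\M(X)$, but this is immediate from $\int f\,d\!\Emp(x,n)=\frac{1}{n}\sum_{j=0}^{n-1}f(T^j(x))$ for $f\in C_b(X)$ together with continuity of $T$, so no serious obstacle arises.
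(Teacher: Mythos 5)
Your proof is correct and follows essentially the same route as the paper: both express the relation $\{(x,\mu):\mu\in V(x)\}$ as a $\bp^0_2$ set via the condition $D(\Emp(x,n),\mu)<1/m$ for infinitely many $n$, intersect with the (Borel) set coming from $U$, and take the projection onto $X$ to conclude analyticity. No substantive difference from the paper's argument.
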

\begin{proof}
Let $R=\{(x,\mu)\in X\times \M(X):\mu\in V(x)\}\subseteq X\times\MT(X)$. We have
\[
R=\bigcap_{m=1}^\infty \bigcap_{N=1}^\infty \bigcup_{n=N}^\infty \{(x,\mu) \in X\times \MT(X) :D(\mu,\Emp(x,n))<1/m\}.
\]
Clearly, the set $\{(x,\mu) \in X\times \MT(X) :D(\mu,\Emp(x,n))<1/m\}$ is open in $X\times\MT(X)$ for every $n,m>0$, so $\{(x,\mu)\in X\times \M(X):\mu\in V(x)\}\subseteq X\times\MT(X)$ is Borel.
Since $\prescript{U}{}G$ is a projection of the set $R\cap (X\times U)$ 
onto the first coordinate, we get that $\prescript{U}{}G$ is $\bs^1_1$ (see \cite[p. 86]{Kechris}).
\end{proof}
As an immediate corollary we have:

\begin{fact}\label{GV_co-analytic}For every closed $V\subseteq \MT(X)$ the set $G_V$ is co-analytic (is $\bp^1_1$).
\end{fact}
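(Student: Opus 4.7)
The plan is to express $G_V$ as the complement of a set of the form $\prescript{U}{}G$ for a suitable open set $U$, and then apply Fact~\ref{fact:UG-analytic}.

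Set $U = \MT(X) \setminus V$. Since $V$ is closed in $\MT(X)$, the set $U$ is open in $\MT(X)$. Recall that $V(x)$ is always a (possibly empty) subset of $\MT(X)$, as every limit point of the sequence of empirical measures $\Emp(x,n)$ is $T$-invariant. Hence for every $x \in X$ we have the equivalence
\[
V(x) \subseteq V \iff V(x) \cap U = \emptyset.
\]
In other words, $x \in G_V$ if and only if $x \notin \prescript{U}{}G$, so $G_V = X \setminus \prescript{U}{}G$.

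By Fact~\ref{fact:UG-analytic}, the set $\prescript{U}{}G$ is $\bs^1_1$. Taking complements, $G_V$ is $\bp^1_1$, which is what we wanted to show.
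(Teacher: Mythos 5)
Your proof is correct and follows essentially the same route as the paper, which likewise writes $G_V$ as the complement of $\prescript{U}{}G$ for $U=\MT(X)\setminus V$ and invokes Fact~\ref{fact:UG-analytic}. Your added remark that $V(x)\subseteq\MT(X)$ (so that $V(x)\subseteq V$ is equivalent to $V(x)\cap U=\emptyset$) is a useful explicit justification of the identity the paper states without comment.
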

\begin{proof}
Recall that  $\prescript{X \setminus V}{} G = X \setminus  G_{V} $ and apply Fact~\ref{fact:UG-analytic}.
\end{proof}

\begin{fact} \label{upper:UG}
Assume $X$ is compact. If $V\subseteq \MT(X)$ is closed, then
$G_V=\{x\in X\colon V(x)\subseteq V\}$ is a $\bP^0_3$ set.
If $U \subseteq \MT(X)$ is open, then $\prescript{U}{} G = \{x\in X: V(x)\cap U\neq\emptyset\}$ is a $\bS^0_3$ set.
\end{fact}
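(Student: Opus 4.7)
The plan is to exploit compactness of $X$ (hence of $\M(X)$ and of $\MT(X)$, the latter being closed in the former) to rewrite the condition ``$V(x)\subseteq V$'' using closed conditions only, which buys one level in the hierarchy compared with the naive expression.

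The key equivalence I will establish for $V\subseteq\MT(X)$ closed is
\[
x\in G_V\iff \forall m\ge 1\ \exists N\ge 1\ \forall n\ge N\colon D(\Emp(x,n),V)\le 1/m,
\]
where $D(\mu,V)=\inf_{\nu\in V}D(\mu,\nu)$. For the forward direction, suppose for some $m$ there are infinitely many $n$ with $D(\Emp(x,n),V)>1/m$; by compactness of $\M(X)$ pass to a subsequence converging to some $\mu\in V(x)$, and since $\nu\mapsto D(\nu,V)$ is continuous we get $D(\mu,V)\ge 1/m$, so $\mu\notin V$, contradicting $V(x)\subseteq V$. The converse is immediate: any limit point $\mu$ of $(\Emp(x,n))$ satisfies $D(\mu,V)\le 1/m$ for every $m$, hence $D(\mu,V)=0$, and $V$ being closed forces $\mu\in V$. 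Using this equivalence,
\[
G_V=\bigcap_{m=1}^\infty\bigcup_{N=1}^\infty\bigcap_{n=N}^\infty\{x\in X:D(\Emp(x,n),V)\le 1/m\}.
\]
Each set inside the innermost intersection is closed, since $x\mapsto \Emp(x,n)$ is continuous (this is where we also use that $X$ is compact only mildly; continuity holds in general) and $\nu\mapsto D(\nu,V)$ is continuous on $\M(X)$. Hence $\bigcap_n\{\cdot\}$ is closed, $\bigcup_N$ of closed is $\bs^0_2$, and $\bigcap_m$ of $\bs^0_2$ is $\bp^0_3$, as required.

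For the second statement, I would avoid repeating the argument by passing to complements. If $U\subseteq\MT(X)$ is open, set $V'=\MT(X)\setminus U$, which is closed in $\MT(X)$. Since $V(x)\subseteq\MT(X)$ always (by compactness, every limit point of empirical measures is $T$-invariant), we have the tautology
\[
V(x)\cap U\ne\emptyset\iff V(x)\not\subseteq V'\iff x\notin G_{V'}.
\]
Therefore $\prescript{U}{}G=X\setminus G_{V'}$, which is the complement of a $\bp^0_3$ set, hence $\bs^0_3$.

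The only real obstacle is the forward direction of the key equivalence, and the subtle point is writing the condition with $\le 1/m$ (a closed condition) rather than $<1/m$ (open) so that the resulting expression lands at level $\bp^0_3$ rather than $\bp^0_4$; compactness of $\MT(X)$ is precisely what allows the closed formulation to suffice.
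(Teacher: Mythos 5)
Your proof is correct and follows essentially the same route as the paper: both rewrite $V(x)\subseteq V$ as $D(\Emp(x,n),V)\to 0$ (using compactness of $\M(X)$ for the forward implication), express this as $\bigcap_m\bigcup_N\bigcap_n$ of the closed sets $\{x: D(\Emp(x,n),V)\le 1/m\}$, and obtain the second statement by taking complements via $\prescript{U}{}G=X\setminus G_{\MT(X)\setminus U}$. Your write-up merely makes explicit the subsequence/compactness argument that the paper leaves as a one-line remark.
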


\begin{proof}
Observe that
\begin{align*}
V(x) \subseteq V \Leftrightarrow & \;
D(\Emp(x, n), V) \to 0 \textrm{ as } n \to \infty \\
\Leftrightarrow & \;
x \in \bigcap_{m = 1}^\infty
\bigcup_{N = 1}^\infty
\bigcap_{n = N}^\infty
\{ x \in X : D(\Emp(x, n), V) \le 1/m \}.
\end{align*}
Here we employed the standard notation $D(\Emp(x, n), V) = \inf_{v \in V} D(\Emp(x, n), v)$. We use the compactness of $X$ in the first equivalence; without compactness, only the implication~$\Leftarrow$~holds.  Since the map $x \mapsto \Emp(x, n)$ is continuous for every $n \geq 1$, the sets $\{ x \in X : D(\Emp(x, n), V) \le 1/m \}$ are closed. Thus $G_V$ is a $\bP^0_3$ set. For the second part of the fact, note that $\prescript{U}{} G = X \setminus  G_{X \setminus U}$ for every open set $U\subseteq X$.
\end{proof}

\begin{fact} \label{upper:GV}
If $V\subseteq \MT(X)$ is closed, then  the set $G(V)$ is co-analytic (is $\bp^1_1$). If, in addition, $X$ is
 compact, then $G(V)$ is a $\bP^0_3$ set.
\end{fact}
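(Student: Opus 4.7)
The plan is to simply decompose $G(V)$ as $G^V \cap G_V$ and invoke the upper bounds already obtained for each piece, together with closure properties of the relevant pointclasses.

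First, recall from the definition that $G(V) = G^V \cap G_V$. By Fact~\ref{upper:GW}, since $V$ is closed, the set $G^V$ is $\bp^0_2$, and in particular it is Borel, hence both analytic and co-analytic. By Fact~\ref{GV_co-analytic}, the set $G_V$ is co-analytic. Since the pointclass $\bp^1_1$ is closed under finite (indeed countable) intersections, the intersection $G(V) = G^V \cap G_V$ is co-analytic. This handles the first assertion, which requires no compactness hypothesis.

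For the second assertion, assume $X$ is compact. Then Fact~\ref{upper:UG} gives the stronger conclusion that $G_V$ is $\bp^0_3$. The $\bp^0_2$ set $G^V$ from Fact~\ref{upper:GW} is also $\bp^0_3$ (since $\bp^0_2\subseteq\bp^0_3$), and $\bp^0_3$ is closed under finite intersections. Therefore $G(V)=G^V\cap G_V$ is $\bp^0_3$, as claimed.

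There is no real obstacle here: both parts are immediate consequences of the prior facts together with standard closure properties of pointclasses. The only nontrivial ingredient lurking in the background is the use of compactness in Fact~\ref{upper:UG}, which is needed for the equivalence $V(x)\subseteq V \Leftrightarrow D(\Emp(x,n),V)\to 0$; without it only one implication holds, which is why in the non-compact case one can assert only that $G(V)$ is co-analytic and not that it is $\bp^0_3$.
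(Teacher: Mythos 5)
Your proof is correct and is essentially identical to the paper's: both decompose $G(V)=G^V\cap G_V$ and combine Fact~\ref{upper:GW} with Fact~\ref{GV_co-analytic} (general case) or Fact~\ref{upper:UG} (compact case), using closure of $\bp^1_1$ and $\bp^0_3$ under intersection. Your closing remark correctly identifies where compactness enters, namely in the proof of Fact~\ref{upper:UG}.
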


\begin{proof}
Use the equality $G(V)=G^V \cap G_V$, Fact~\ref{upper:GW} and, respectively, Fact \ref{GV_co-analytic} or Fact \ref{upper:UG}.
\end{proof}

%
%



The following question is related to those considered in this paper.

\begin{ques*} \label{ques:gq}
Given a dynamical system $(X,T)$, $\varphi\in \cC_b(X)$, and
$A\subseteq \R$ what is the exact complexity of the set
\[
B(\varphi,A)=\bigcup_{\alpha\in A}B_\phi(\alpha)?
\]
\end{ques*}

\section{Auxiliary results}\label{sec:aux}
The next three results record some basic estimates which we will use in the proof of our main result.
To formulate these results we have to extend our terminology.

The {\em empirical measure} $\Emp(\bar x)$ of a sequence $\bar x=\langle x_1,\ldots,x_k\rangle\in X^k$, where $k>0$
is the normalized sum of the Dirac (point-mass) measures along points in the sequence, that is
\[
\Emp(\bar x)=\frac{1}{k}\sum_{p=1}^{k}\delta_{x_p}.
\]
Given a specification $\xi=(\langle x_j,n_j\rangle)_{j=1}^k\in (X\times \omega\setminus\{0\})^k$ we define the \emph{empirical measure of $\xi$} denoted by $\Emp(\xi)$ as the empirical measure of the sequence $\seq(\xi)\in X^{\sum_{j=1}^kn_j}$ obtained by concatenation of the orbits segments in $\xi$, that is,
$\Emp(\xi)=\Emp(\seq(\xi))$  where
\[
\seq(\xi)=\langle x_1,T(x_1),\ldots, T^{n_1-1}(x_1),\ldots,x_k,T(x_k),\ldots,T^{n_k-1}(x_k)\rangle\in X^{\sum_{j=1}^kn_j}.
\]
Note that if $\xi=\langle (x,n)\rangle$ is a single orbit segment for some $n>0$, then we have
\[
\Emp(\xi)=\Emp(\langle x,T(x),\ldots, T^{n-1}(x)\rangle)=
\frac{1}{n}\sum_{j=0}^{n-1}\delta_{T^j(x)}=\Emp(x,n),
\]
where $\Emp(x,n)$ is the empirical measure defined by \eqref{eqn:empirical}.

\begin{lem}\label{lem:close-segments}
Let $\gamma,\delta,\eps>0$. Assume that
$\bar x=\langle x_1,\ldots,x_k\rangle\in X^k$ and
$\bar y=\langle y_1,\ldots,y_\ell\rangle\in X^\ell$, where $0<k\le \ell\le (1+\delta)k$.
If there exists $j$ with $(1-\gamma)k\le j\le k$ and
subsequences $\bar x'=\langle x_{a_1},\ldots,x_{a_j}\rangle\in X^j$ and
$\bar y'=\langle y_{b_1},\ldots,y_{b_j}\rangle\in X^j$ with $1\le a_1 <\ldots < a_j\le k$
and $1\le b_1 <\ldots < b_j\le \ell$
such that $\rho(x_{a_i},y_{b_i})<\eps$ for $1\le i\le j$, then
\[
D(\Emp(\bar x),\Emp(\bar y))
\le 2\gamma+\delta+\eps.
\]
\end{lem}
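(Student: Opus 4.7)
The plan is to verify the Prohorov inequality \eqref{eqn:Prohorov} directly, with $\eps^* = 2\gamma+\delta+\eps$: I will show that for every Borel set $A\subseteq X$,
\[
\Emp(\bar x)(A)\;\le\;\Emp(\bar y)(A^{\eps^*})+\eps^*.
\]
This suffices because the remark following \eqref{eqn:Prohorov} guarantees that on Borel probability measures the one-sided definition of $D$ already yields the (symmetric) metric, so verifying one direction is enough.

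First I would split the index set $\{1,\ldots,k\}$ into the matched part $M:=\{a_1,\ldots,a_j\}$ and its complement $R$ of size $|R|=k-j\le\gamma k$. Bounding the indicator of $A$ by $1$ on $R$ gives
\[
k\cdot\Emp(\bar x)(A)\;=\;\#\br{i\in\{1,\ldots,k\}:x_i\in A}\;\le\;\#\br{p\in\{1,\ldots,j\}:x_{a_p}\in A}+\gamma k.
\]
Next I would apply the pointwise closeness hypothesis: whenever $x_{a_p}\in A$, the bound $\rho(x_{a_p},y_{b_p})<\eps$ forces $y_{b_p}\in A^\eps$. Since $b_1<\cdots<b_j$ are distinct elements of $\{1,\ldots,\ell\}$, this yields
\[
\#\br{p:x_{a_p}\in A}\;\le\;\#\br{q\in\{1,\ldots,\ell\}:y_q\in A^\eps}\;=\;\ell\cdot\Emp(\bar y)(A^\eps).
\]
Dividing the combined estimate by $k$ and using $\ell/k\le 1+\delta$ together with $\Emp(\bar y)(A^\eps)\le 1$ gives
\[
\Emp(\bar x)(A)\;\le\;(1+\delta)\Emp(\bar y)(A^\eps)+\gamma\;\le\;\Emp(\bar y)(A^\eps)+\gamma+\delta.
\]

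To finish, I would observe that $\eps^*\ge\eps$ implies $A^\eps\subseteq A^{\eps^*}$, while $\eps^*\ge\gamma+\delta$ absorbs the additive term; hence $\Emp(\bar x)(A)\le\Emp(\bar y)(A^{\eps^*})+\eps^*$, which is exactly what we need. There is no real obstacle in this proof; the only things to watch are constant-tracking (the argument in fact produces the slightly sharper bound $\eps+\gamma+\delta$, so the stated $2\gamma+\delta+\eps$ is comfortable) and remembering that on probability measures one-sided checking of \eqref{eqn:Prohorov} suffices.
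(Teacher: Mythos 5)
Your proof is correct, and it also yields the sharper constant $\gamma+\delta+\eps$, so the stated bound $2\gamma+\delta+\eps$ follows a fortiori. Your route differs from the paper's: you verify the defining inequality \eqref{eqn:Prohorov} directly for the pair $(\Emp(\bar x),\Emp(\bar y))$ by a single counting argument --- discard the at most $\gamma k$ unmatched indices of $\bar x$, push each matched atom $x_{a_p}\in A$ into $A^\eps$ via its partner $y_{b_p}$ (distinctness of the $b_p$ is the key point, and you use it), and absorb the length discrepancy $\ell/k\le 1+\delta$ using $\Emp(\bar y)(A^\eps)\le 1$. The paper instead decomposes via the triangle inequality $D(\Emp(\bar x),\Emp(\bar y))\le D(\Emp(\bar x),\Emp(\bar x'))+D(\Emp(\bar x'),\Emp(\bar y'))+D(\Emp(\bar y'),\Emp(\bar y))$, bounding the middle term by $\eps$ from $D(\delta_{x_{a_i}},\delta_{y_{b_i}})<\eps$ and the two outer terms by $\gamma$ and $\gamma+\delta$ through comparisons of a sequence with its subsequence; the two $\gamma$'s are why their constant is $2\gamma+\delta+\eps$. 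The paper's decomposition is modular (the comparison of a sequence with a subsequence is reusable), while yours is more elementary, avoids introducing the intermediate measures $\Emp(\bar x')$, $\Emp(\bar y')$, and tracks constants more tightly. One small remark: since the lemma's conclusion is exactly the one-sided inequality with $\Emp(\bar x)$ in first position, your verification already bounds $D$ as defined in \eqref{eqn:Prohorov}; the appeal to symmetry of the Prohorov metric on probability measures (which the paper notes after \eqref{eqn:Prohorov}) is only needed if one wants the bound with the arguments interchanged, so your use of it is legitimate but not even strictly necessary.
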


\begin{proof}
By the triangle inequality,
\begin{align} \label{triangle_ineq}
D(\Emp(\bar{x}),\Emp(\bar{y})) \leq D((\Emp(\bar{x}), \Emp(\bar x'))
+D(\Emp(\bar x'), \Emp(\bar y'))+D(\Emp(\bar y'), \Emp(\bar{y})).
\end{align}
For each $1\leq i \leq j$ we have from the definition of $D$ that
$D(\delta_{x_{a_i}},\delta_{y_{b_i}})<\epsilon$. It follows that
$D(\Emp(\bar x'), \Emp(\bar y'))<\epsilon$.

Next, observe that
$$ \Emp(\bar{x}')(A) \le \frac{k}{j} \Emp(\bar{x})(A)
 \le \Emp(\bar{x})(A) + \frac{k-j}{k}
 \le \Emp(\bar{x})(A) + \gamma,
$$
which means that $D(\Emp(\bar{x}), \Emp(\bar x'))\leq \gamma$. By our assumptions $(1 - \gamma - \delta) k \le l \le k$; this implies $D(\Emp(\bar{y}), \Emp(\bar y')) \leq \gamma +  \delta$ by a similar reasoning. Plugging all the mentioned inequalities into (\ref{triangle_ineq}) finishes the proof.
\end{proof}


We have the following immediate corollary.

\begin{cor}Let $k>0$.
If a point $y$ $\eps$-approximately-traces a specification
$\xi=(\langle x_j,n_j\rangle)_{j=1}^k\in (X\times \omega\setminus\{0\})^k$
with gaps $s_1,\ldots,s_{k-1}\in \omega$ bounded by $\delta_2 n_j$
and errors bounded by $\delta_1 n_j$ and
\[
L=n_k+\sum_{i=1}^{k-1}(s_i+n_i),
\]
then $D(\Emp(\xi),\Emp(y,L))<2\gamma+\delta+\eps$.
\end{cor}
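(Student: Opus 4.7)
The plan is to apply Lemma~\ref{lem:close-segments} to the sequences obtained by writing out, respectively, the concatenated orbit segments of $\xi$ and the length-$L$ orbit segment of $y$. Concretely, set $N=\sum_{j=1}^k n_j$ and take $\bar x=\seq(\xi)\in X^N$ (so $\Emp(\bar x)=\Emp(\xi)$) and $\bar y=\langle y,T(y),\ldots,T^{L-1}(y)\rangle\in X^L$ (so $\Emp(\bar y)=\Emp(y,L)$). Since the lemma's conclusion is expressed in exactly the quantity $D(\Emp(\bar x),\Emp(\bar y))$, once the hypotheses are verified the result follows immediately.

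The length comparison is straightforward: $L-N=\sum_{i=1}^{k-1}s_i$, and the hypothesis $s_{i}\le\delta_2 n_{i+1}$ gives $L-N\le \delta_2\sum_{j=2}^{k}n_j\le \delta_2 N$, so $N\le L\le (1+\delta_2)N$, which matches the lemma's hypothesis with $\delta=\delta_2$. For the pointwise-close subsequences, I would read off the matches from Definition~\ref{def:faithful2}: the first segment contributes all $n_1$ indices (as $y\in B_{n_1}(x_1,\eps)$ forces $\rho(T^t(y),T^t(x_1))<\eps$ for $0\le t<n_1$), and for each $2\le j\le k$ the set $\Lambda_j$ provides at least $(1-\delta_1)n_j$ further matches, where the position $t\in\Lambda_j$ corresponds to index $\sum_{i<j}n_i+t$ in $\bar x$ and index $\sum_{i<j}(n_i+s_i)+t$ in $\bar y$. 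Summing, the total number $j$ of matched pairs satisfies $j\ge n_1+\sum_{j=2}^{k}(1-\delta_1)n_j\ge(1-\delta_1)N$, so the lemma applies with $\gamma=\delta_1$.

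The main (and only) thing to be a little careful about is that the chosen index sequences $a_1<\cdots<a_j$ in $\bar x$ and $b_1<\cdots<b_j$ in $\bar y$ are genuinely increasing. This is automatic from the structure: we enumerate the matches in the natural order (by segment $j$, then by $t$ within $\Lambda_j$), and both index formulas $\sum_{i<j}n_i+t$ and $\sum_{i<j}(n_i+s_i)+t$ are strictly increasing in the pair $(j,t)$ under lexicographic order, because increasing $t$ within a fixed $j$ increases each expression by $1$, and passing from the last index of segment $j$ to the first of segment $j+1$ strictly increases each expression (by at least $1$ on the $\bar x$ side, and by at least $1+s_j$ on the $\bar y$ side). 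With the three hypotheses of Lemma~\ref{lem:close-segments} met with parameters $(\gamma,\delta,\eps)=(\delta_1,\delta_2,\eps)$, the lemma yields $D(\Emp(\xi),\Emp(y,L))\le 2\delta_1+\delta_2+\eps$, which is precisely the claimed bound (with $\gamma,\delta$ read as $\delta_1,\delta_2$).
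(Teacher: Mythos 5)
The paper offers no separate proof (it declares the corollary ``immediate'' from Lemma~\ref{lem:close-segments}), and your argument is exactly the intended application: take $\bar x=\seq(\xi)$ and $\bar y$ the length-$L$ orbit segment of $y$, check $N\le L\le(1+\delta_2)N$, count at least $(1-\delta_1)N$ matched indices coming from the full first segment and the sets $\Lambda_j$, and verify monotonicity of the index maps, then apply the lemma with $(\gamma,\delta)=(\delta_1,\delta_2)$. Your bookkeeping is correct; the only discrepancy --- you obtain $D(\Emp(\xi),\Emp(y,L))\le 2\delta_1+\delta_2+\eps$ rather than the strict inequality in the corollary's statement --- is inherited from the lemma's own non-strict conclusion and reflects a slip in the paper's wording, not a gap in your proof.
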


\begin{lem}\label{lem:protogen}
If $(X,T)$ is a Polish dynamical system with the strong approximate
product structure and $\mu$ is a $T$-invariant measure on $X$, then
for each $\eps>0$ there exists a positive integer $Q$ such that for every $n>0$
there is $x\in X$ satisfying $D(\mu,\Emp(x,nQ))<\eps$.
\end{lem}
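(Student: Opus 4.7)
The plan is to build the witnessing point $x$ by concatenating many short ``model blocks'' whose empirical measure is close to $\mu$ and then using the strong approximate product structure to realize this concatenation as a single orbit of length slightly larger than $nQ$. The model block itself comes from a finite convex approximation of $\mu$ by equal-length empirical measures $\Emp(x_i,N)$, whose existence follows directly from $T$-invariance.

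First I fix auxiliary parameters $\eps_1,\delta_1,\delta_2>0$ with $2\delta_1+3\delta_2+\eps_1<\eps/2$ and let $N\geq N(\eps_1,\delta_1,\delta_2)$ be the threshold from Definition~\ref{def:saps}. Since $\mu$ is $T$-invariant, a straightforward Fubini computation gives the identity
\[
\mu = \int_X \Emp(x,N)\,d\mu(x),
\]
so $\mu$ lies in the Prohorov closed convex hull of $\{\Emp(x,N):x\in X\}$. Approximating by finite convex combinations and then rationalizing the weights, I obtain points $x_1,\ldots,x_r\in X$ and positive integers $p_1,\ldots,p_r$ with $q=\sum_i p_i$ such that $D\bigl(\tfrac{1}{q}\sum_i p_i\Emp(x_i,N),\mu\bigr)<\eps/2$. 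Let $\eta$ be the specification of rank $q$ built by listing, for each $i$, exactly $p_i$ copies of the orbit segment $\langle x_i,N\rangle$; the definition of $\Emp(\eta)$ gives $\Emp(\eta)=\tfrac{1}{q}\sum_i p_i\Emp(x_i,N)$, hence $D(\Emp(\eta),\mu)<\eps/2$. Set $Q=qN$.

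For each $n>0$ let $\xi_n$ be the specification obtained by concatenating $n$ copies of $\eta$; its rank is $nq$, every segment has length exactly $N\geq N(\eps_1,\delta_1,\delta_2)$, the total segment length equals $nqN=nQ$, and $\Emp(\xi_n)=\Emp(\eta)$. The strong approximate product structure produces a point $y_n$ that $(\eps_1,\delta_1,\delta_2)$-approximately-traces $\xi_n$ with gaps $s_1,\ldots,s_{nq-1}$ of sizes at most $\delta_2 N$. The number of iterates used is $L_n=nQ+\sum_j s_j\in[nQ,(1+\delta_2)nQ]$, and the corollary to Lemma~\ref{lem:close-segments} yields $D(\Emp(y_n,L_n),\Emp(\xi_n))\leq 2\delta_1+\delta_2+\eps_1$. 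Since the first $nQ$ iterates of $y_n$ form an initial segment of its $L_n$ iterates and $L_n-nQ\leq\delta_2 nQ$, a direct comparison on Borel sets bounds the total variation, and hence the Prohorov distance, between $\Emp(y_n,nQ)$ and $\Emp(y_n,L_n)$ by $2(L_n-nQ)/L_n\leq 2\delta_2$. The triangle inequality then gives $D(\mu,\Emp(y_n,nQ))<\eps/2+(2\delta_1+\delta_2+\eps_1)+2\delta_2<\eps$, so $x=y_n$ witnesses the conclusion.

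The main obstacle is the initial convex approximation: extracting a finite convex combination of equal-length empirical measures that is within $\eps/2$ of $\mu$ in the Prohorov metric and has positive rational coefficients. The identity $\mu=\int\Emp(x,N)\,d\mu(x)$ is just $T$-invariance integrated against $\mu$; passing from this barycenter representation to a finite convex combination, and then to rational coefficients, is standard once we fix the Prohorov topology. Everything else is a mechanical combination of Definition~\ref{def:saps}, Lemma~\ref{lem:close-segments}, and the truncation estimate.
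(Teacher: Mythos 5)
Your proof is correct, and the tracing half -- a specification of $nq$ blocks of length $N$, an appeal to the strong approximate product structure, Lemma~\ref{lem:close-segments}, and the truncation estimate comparing $\Emp(y_n,nQ)$ with $\Emp(y_n,L_n)$ -- matches the paper's argument step for step. Where you genuinely diverge is in how the model block is manufactured. The paper first approximates $\mu$ by a rational convex combination $\sum_i q_i\nu_i$ of \emph{ergodic} measures, then invokes the Birkhoff ergodic theorem to choose generic points $z_i$ and a length $K$ large enough that $D(\Emp(z_i,K),\nu_i)$ is small. You instead use only $T$-invariance, via the barycenter identity $\mu=\int_X\Emp(x,N)\,d\mu(x)$, to place $\mu$ in the weak$^*$-closed convex hull of $\{\Emp(x,N)\colon x\in X\}$ for the single prescribed length $N$, and then rationalize the weights. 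This buys you something real: you avoid both the ergodic decomposition and the ergodic theorem (the density of rational ergodic combinations in $\MT(X)$ needs separate justification in the non-compact Polish setting), and you need no ``$K$ large enough for generic points to have equidistributed'' step. A side benefit of your bookkeeping is that setting $Q=qN$ makes explicit that the period must absorb the block length; the paper's closing comparison $nQ\le L<(1+\eps/4)nQ$ only parses if its $Q$ is likewise read as the denominator times $K$. The one step you leave compressed -- passing from the barycenter identity to a finite convex combination within Prohorov distance $\eps/2$ with positive rational weights -- is indeed valid (separate with a single $f\in C_b(X)$ and use $\int A_Nf\,d\mu=\int f\,d\mu$ to contradict strict separation, or work with finitely many test functions in $\R^m$), but since it is the load-bearing replacement for the ergodic-theoretic input, it deserves to be written out rather than cited as standard.
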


\begin{proof} Assume that $\mu$ is a $T$-invariant (possibly non-ergodic) probability measure.
It is well-known that $\mu$ can be approximated by  affine combinations of finite collections of
ergodic measures with rational coefficients, that is, for every $\eps>0$ there is a measure $\nu\in\MT(X)$ such that
\[
D(\mu,\nu)<\eps/4\quad \text{and}\quad\nu=\sum_{i=1}^{k} q_i \nu_i,
\]
where $k>0$ and $\nu_1,\ldots,\nu_k\in\MT(X)$ are ergodic, $q_1,\ldots,q_k\in\mathbb{Q}$ are positive
and $q_1+\cdots+q_k=1$. Let $Q$ be a positive integer such that for some $p_1,\ldots,p_k$ we have $q_i=p_i/Q$ for $i=1,\ldots,k$.
We choose $z_1,\ldots,z_k$ such that $z_i$ is generic for $\nu_i$.
Next, let $K\geq N(\eps/8)$ (here $N(\cdot)$ is as in Remark~\ref{rem:N}) be large enough so that
\[
D(\Emp(z_i,K),\nu_i)<\eps/8 \text{ for }i=1,\ldots,k.
\]
For each $n>0$ we define a specification $\xi$ by
\[
\xi=(\langle x'_j,K\rangle)_{j=1}^{nQ}\in (X\times \omega\setminus\{0\})^{nQ},
\]
where each $z_i$ appears $np_i$ times on the list $x'_1,\ldots,x'_{nQ}$, that is,
\[
\langle x'_j\rangle_{j=1}^{nQ}=\langle \underbrace{z_1,\ldots,z_1}_{\text{$np_1$ times}},
\underbrace{z_2,\ldots,z_2}_{\text{$np_2$ times}},\,\ldots\,,\underbrace{z_k,\ldots,z_k}_{\text{$np_k$ times}}\rangle.
\]
By definition of $\xi$ we have
\begin{equation}\label{eqn:emp-xi}
\Emp(\xi)=\sum_{i=1}^k q_i\Emp(z_i,K) .
\end{equation}
Let $x$ be a point which $\eps/8$-approximately-traces the specification $\xi$ and let $L$ be
the number of iterates required to do so. Then
$D(\Emp(\xi),\Emp(x,L))<\eps/4$, hence $D(\mu,\Emp(x,L))<5\eps/8$
because of \eqref{eqn:emp-xi}.

Since $nQ\le L<(1+\eps/4)nQ$ we use Lemma~\ref{lem:close-segments} to get
$D(\Emp(x,nQ),\Emp(x,L))<\eps/4$ which ends the proof.
\end{proof}

\begin{rem} \label{rem:wsl}
Lemma~\ref{lem:protogen} shows, assuming $(X,T)$ has the strong approximate product structure, that
the set of measures of the form $\Emp(x,n)$ for $x\in X$ and $n>0$
is weak${}^*$ dense in $\MT(X)$ in the sense that every $\mu \in \MT(X)$ is the
weak${}^*$ limit of a sequence $\mu_k=\Emp(x_k,n_k)$ (in general, the measures $\Emp(x,n)$
are not $T$-invariant).
\end{rem}

\section{Main Results} \label{sec:ws}

In this section we prove our main results about the complexity of
the set of generic points for systems satisfying the strong approximate
product structure. If the system $(X, T)$ is compact, the set $\MT(X)$ is nonempty (a theorem of Bogolyubov and Krylov). If $X$ is compact and $\MT(X)$ consists of a single element $ \mu $, then $X = G_\mu$. Thus for compact $X$, the statistical behavior of the system is trivial if there exist less than two $T$-invariant measures. For this reason, in this section we mostly work under the assumption that $| \MT(X) | \ge 2$. It should be kept in mind, however, that for general Polish $X$, the set $\MT(X)$ may be empty.

\begin{exam} Let $\hat X$ be a compact metric space and $\hat{T}\colon \hat{X}\to\hat{X}$ be a continuous map with the specification property. For example, we can take the compact unit interval, $\hat{X}=[0,1]$ and $\hat{T}$ to be the tent map $\hat{T}(x)=1-|1-2x|$. Let $X$ be the set of points with the maximal oscillation. It was noted by Sigmund that  $X$ is a dense $G_\delta$ subset of $\hat{X}$, and $X$ is a $T$-invariant set. Hence restricting $\hat{T}$ to $X$ we obtain a Polish dynamical system which does not have any invariant measures, since $I(X,T)=X$. It is easy to see that $(X,T)$ has the specification property. It shows that we need to assume that $| \MT(X) | \ge 2$
\end{exam}

In Lemma \ref{lem:main}, we show (assuming that $(X,T)$ has the strong approximate product structure) that given a sequence of $T$-invariant measures $(\mu_n)_{n \ge 1}$, satisfying $D(\mu_n, \mu_{n+1}) \to 0$, one can find a point $y \in X$ and piecewise constant\footnote{Constant on finite sets of consecutive $j$'s.} nondecreasing function $j\mapsto n(j)$ such that the sequence of empirical measures $\Emp(y, j)$, is asymptotic to the sequence $\mu_{n(j)}$ (the distance
$D(\Emp(y,j),\mu_{n(j)})$ converges to $0$ as $j\to\infty$).

Part~(\ref{main1}) of Theorem~\ref{thm:main} gives the exact complexity of the set of generic
points $G_\mu$ for a Polish dynamical system $(X,T)$ with the strong approximate product structure satisfying $| \MT(X) | \ge 2$.
More generally,
Part~(\ref{main2}) of Theorem~\ref{thm:main}, uses the same proof method, to obtain lower bounds for the complexity of the set $G(V)$, where $V\subseteq \MT(X)$is  a non-trivial, closed,
connected subset of $\MT(X)$. If $X$ is compact then $G_\mu=G(\{ \mu\})$ and
so in this case (\ref{main1})  follows directly from the statement of
(\ref{main2}) of Theorem~\ref{thm:main}. In this section we also prove lower bounds on the complexity of the sets $G^V, G_V$ and other sets introduced in Section \ref{sec:ub}.

\begin{lem} \label{lem:main}
Let $(X, T)$ be a Polish dynamical system with the strong approximate product structure.
\begin{enumerate}
\item \label{it:i} Given a sequence $(\mu_n)_{n \geq 1}$ in $\MT(X)$ such that $D(\mu_n, \mu_{n+1}) \to 0$, there exists a point $y \in X$ and a sequence $L_n \nearrow \infty$ such that
\begin{equation} \label{eqn:lemmain}
D(\Emp(y, j), \mu_{n(j)}) \to 0 \quad \textrm{ as } j \to \infty,
\end{equation}
where $n(j)$ denotes the unique integer satisfying $L_{n(j) - 1} \le j < L_{n(j)}$. In particular, this implies $V(y) = \{ \textrm{ limit points of the sequence } (\mu_k)_{k \geq 1} \}$.
\item \label{it:ii} Given any $y_0 \in X$ and $\eps > 0$, we can define $y$ in \eqref{it:i} so that $y \in B(y_0, \eps)$.
\item \label{it:iii} If $y^{1}$ and $y^{2}$ are points obtained as in \eqref{it:i}  for sequences of measures $( \mu^{1}_n ) _{n \ge 1}$ and $( \mu^{2}_n ) _{n \ge 1}$ such that for some $N\ge 1$ we have that $\mu^{1}_i = \mu^{2}_i$ for $1\le i \le N$,  then $\rho(y^{1}, y^{2}) \le 1 / 2^N$.
\end{enumerate}
\end{lem}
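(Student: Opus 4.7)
The plan is to construct $y$ as a limit of a Cauchy sequence $(y^{(N)})$ built by iteratively applying the strong approximate product structure (SAPS) to rank-$2$ specifications, where each new specification's first segment is the orbit of the previous point. Fix a rapidly decreasing sequence $\eps_n \searrow 0$ with $\eps_n < 1/2^{n+3}$. For each $n$, apply Lemma~\ref{lem:protogen} to find a period $Q_n$ and then fix a huge multiple $K_n$ of $Q_n$ together with $x_n \in X$ satisfying $D(\Emp(x_n,K_n),\mu_n) < \eps_n$. The integer $K_n$ will be chosen so large (relative to the total length $L_{n-1}$ of everything assembled so far) that the $n$-th segment dominates all prior contributions to the empirical measure; concretely, $K_n \ge 2^n L_{n-1}/\eps_n$ and $K_n \ge N(\eps_n)$ from SAPS.

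The inductive step is the heart of the argument. Set $y^{(1)}$ to a point $\eps_1$-tracing $(x_1,K_1)$ and let $L_1=K_1$. Given $y^{(N)}$ and $L_N$, apply SAPS to the rank-$2$ specification $\langle (y^{(N)},L_N),(x_{N+1},K_{N+1})\rangle$ with parameter $\eps_{N+1}$, obtaining $y^{(N+1)}$ and a gap $s_N\le \eps_{N+1}K_{N+1}$; set $L_{N+1}=L_N+s_N+K_{N+1}$. Because the first segment in the SAPS definition is traced without errors, $y^{(N+1)}\in B_{L_N}(y^{(N)},\eps_{N+1})$; in particular $\rho(y^{(N)},y^{(N+1)})<\eps_{N+1}$. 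Hence $(y^{(N)})$ is Cauchy in $(X,\rho)$, converging to some $y\in X$, and $\rho(T^j(y),T^j(y^{(N)}))<\sum_{k>N}\eps_k$ for all $0\le j<L_N$.

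To verify \eqref{eqn:lemmain}, apply Lemma~\ref{lem:close-segments} to compare $\Emp(y,j)$ with the empirical measure of the relevant prefix of $y^{(n(j))}$, which in turn compares to $\Emp(x_{n(j)},K_{n(j)})\approx \mu_{n(j)}$. The dominance condition $K_n\gg L_{n-1}$ yields $D(\Emp(y,L_n),\mu_n)\to 0$. For intermediate $j\in(L_{n-1},L_n)$ I split into two cases: if $j-L_{n-1}$ is comparable to $K_n$, then the $n$-th segment already dominates and gives closeness to $\mu_n$; if $j$ is close to $L_{n-1}$, then $\Emp(y,j)\approx \Emp(y,L_{n-1})\approx \mu_{n-1}\approx \mu_n$, using the hypothesis $D(\mu_n,\mu_{n+1})\to 0$. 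The characterization of $V(y)$ as the set of limit points of $(\mu_k)_k$ is immediate from \eqref{eqn:lemmain}. For part~\eqref{it:ii}, prepend a short initial segment $(y_0,1)$ to the first specification with error $\eps$; since SAPS guarantees no error on the first segment, $y\in B(y_0,\eps)$, and the finite-length prefix is washed out asymptotically. For part~\eqref{it:iii}, observe that the choices at step $n$ in the construction depend only on $\mu_1,\ldots,\mu_n$; if $\mu^1_i=\mu^2_i$ for $i\le N$, then identical choices can be made through step $N$, so both $y^1$ and $y^2$ lie in $B_{L_N}(y^{(N)},\sum_{k>N}\eps_k)$ and $\rho(y^1,y^2)<2\sum_{k>N}\eps_k<1/2^N$.

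The main technical obstacle is the bookkeeping in step~4: translating the Bowen-metric tracing guarantees into Prohorov-metric closeness of empirical measures while controlling the transitional regime at $j$ near $L_{n-1}$. This is where the rapidly growing lengths $K_n$, the assumption $D(\mu_n,\mu_{n+1})\to 0$, and Lemma~\ref{lem:close-segments} all combine, and where the choices of $\eps_n$, $K_n$, and the SAPS parameters must be coordinated carefully from the outset.
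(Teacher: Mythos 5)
Your overall architecture (iterate SAPS on specifications whose first segment is the orbit of the previously constructed point, pass to the Cauchy limit, and deduce parts \eqref{it:ii} and \eqref{it:iii} from the error-free tracing of the first segment and the fact that stage $n$ depends only on $\mu_1,\dots,\mu_n$) is the same as the paper's. But your key deviation --- appending at stage $n$ a \emph{single} block $(x_n,K_n)$ whose length $K_n$ is chosen huge compared with $L_{n-1}$ --- creates a genuine gap in exactly the step you flag as the ``main technical obstacle''. For $j$ strictly inside the new block, say $j\approx L_{n-1}+s_{n-1}+t$ with $t\approx K_n/2$, the dominance condition $K_n\gg L_{n-1}$ forces $\Emp(y,j)$ to be close to $\Emp(x_n,t)$, the empirical measure of a \emph{partial} orbit segment of $x_n$. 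Lemma~\ref{lem:protogen} only controls $D(\Emp(x_n,K_n),\mu_n)$ at the full length $K_n$; it gives no control at intermediate times, and in fact its proof produces points whose partial averages are far from $\mu_n$ (e.g.\ if $\mu_n=\tfrac12(\nu_1+\nu_2)$ with $\nu_i$ ergodic, the first half of the traced specification tracks $\nu_1$, so $\Emp(x_n,K_n/2)\approx\nu_1$). Hence your case ``$j-L_{n-1}$ comparable to $K_n$, so the $n$-th segment dominates and gives closeness to $\mu_n$'' is false as stated: \eqref{eqn:lemmain} fails at such $j$, and $V(y)$ may acquire limit measures (like $\nu_1$) that are not limit points of $(\mu_k)$. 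You cannot repair this by taking $x_n$ generic for $\mu_n$, since $\mu_n$ need not be ergodic and the density of $G_{\mu_n}$ is a consequence of this very lemma.

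The missing idea is the one the paper uses: at stage $n$ append \emph{many repetitions} $\ell_n$ of a block $(x_n,K_n)$ of moderate length, with $L_{n-1}\ge nK_n$ and $\ell_nK_n\ge nL_{n-1}$. Then at any intermediate $j$ the incomplete current block contributes a fraction $\lesssim 1/n$ of the mass, while the completed part is (up to tracing errors) a convex combination of $\Emp(y_{n-1},L_{n-1})$ and $\Emp(x_n,K_n)$, both within $o(1)$ of $\mu_n$ (using the inductive estimate at $j=L_{n-1}$ together with $D(\mu_{n-1},\mu_n)\to0$, and the quasi-convexity bound $D(\Emp(\zeta),\mu_n)\le\max\{D(\Emp(y_{n-1},L_{n-1}),\mu_n),D(\Emp(x_n,K_n),\mu_n)\}$). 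Two smaller points: in \eqref{it:ii} you cannot prepend a segment $(y_0,1)$, since Definition~\ref{def:saps} requires all segment lengths to be at least $N(\eps,\delta_1,\delta_2)$ --- the paper instead starts the first specification with $(y_0,L_0)$, $L_0=K_1$, which already yields $\rho(y,y_0)\le\eps/2$; and your constant-$2$ in the bound $\rho(y^1,y^2)<2\sum_{k>N}\eps_k$ is fine only because you took $\eps_k<1/2^{k+3}$, so state that dependence explicitly.
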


\begin{proof}
Fix $0< \eps < 1$.
We will define sequences of integers $(K_n)_{n\ge 0}$ and points
$(x_n)_{n\ge 0} \subseteq X$ which will be parameters for our construction. For each $n\ge 0$ we use Lemma \ref{lem:protogen}
to pick $x_n \in X$ and $K_n$
so that the following hold:
\begin{gather}
 K_n\ge N(\eps/2^{n+1}), \label{eqn:cond-x}\\
  D(\Emp(x_n,K_n),\mu_n) \le \eps/2^{n+1}.  \label{eqn:cond-i}
\end{gather}
Here, $N(\eps/2^{n+1})$ is defined as in Remark~\ref{rem:N}. 

Next, we will define $(\ell_n)_{n\ge 1}, (L_n)_{n \ge 0}$ and $(y_n)_{n \ge 0} \subseteq X$. We begin with $L_0 = K_1$ and $y_0$ arbitrary point in $X$. For all $n \ge 0$ we will have
\begin{equation} \label{eqn:Ln_bound}
L_{n-1} \ge n K_n.
\end{equation}
We proceed inductively, for $n \ge 1$:
\begin{itemize}
  \item pick $\ell_n$ such that $\ell_n K_n \ge (n + 1) K_{n+1}$ and $\ell_n K_n \ge n L_{n-1}$,
  \item define the specification $\xi_n$ to consist of the pair $(y_{n-1}, L_{n-1})$ followed by the pair $(x_n, K_n)$ repeated $\ell_n$ times,
  \item define $y_n$ to be a point that $\eps/2^{n+1}$-approximately-traces the specification $\xi_n$,
  \item define $L_n$ to be the length of orbit segment required for $y_n$ to trace $\xi_n$. Note that condition (\ref{eqn:Ln_bound}) is satisfied, as $L_n \ge \ell_n K_n \ge n K_{n+1}$.
\end{itemize}
It follows from this definition that
\begin{equation}
\rho(T^i(y_{n-1}), T^i(y_n)) \le \eps / 2^{n+1} \quad \textrm{ for } 0 \le i < L_{n-1}.
\end{equation}
In particular, the sequence $(y_n)$ must converge to a limit $y$, and we have
\begin{equation} \label{eqn:yn_close_y}
\rho(T^i(y_{n-1}), T^i(y)) \le \eps / 2^n \quad \textrm{ for } 0 \le i < L_{n-1}.
\end{equation}

We will now examine the empirical measures $\Emp(y, j)$ for $j>0$. First, consider $\Emp(y, L_n)$ for some integer $n$. By (\ref{eqn:yn_close_y}), we have
\begin{equation} \label{eqn:item1}
D(\Emp(y, L_n), \Emp(y_n, L_n)) \le \eps / 2^n.
\end{equation}
Recall that we have chosen $y_n$ so that it $\eps/2^{n+1}$-approximately-traces $\xi_n$ along the orbit segment of length $L_n$. Together with Lemma \ref{lem:close-segments} this gives $D(\Emp(y_n, L_n), \Emp(\xi_n)) < 4 \eps/2^{n+1} = \eps / 2^{n-1}$.

Define $\bar{\xi_n}$ to be the specification consisting of $\ell_n$ repetitions of pair $(x_n, K_n)$. Thus $\bar{\xi_n}$ only differs from $\xi_n$ by the lack of initial $(y_{n-1}, L_{n-1})$ segment. We defined $\ell_n$ so that $\ell_n K_n \ge n L_{n-1}$, thus an application of Lemma \ref{lem:close-segments} gives
\begin{equation} \label{eqn:item2}
D(\Emp(\xi_n), \Emp(\bar{\xi_n})) \le 1/n.
\end{equation}
Finally, notice that
\begin{equation} \label{eqn:item3}
D(\Emp(\bar{\xi_n}), \mu_n) = D(\Emp(x_n, K_n), \mu_n) \le \eps / 2^{n+1}.
\end{equation}

Combining (\ref{eqn:item1}), (\ref{eqn:item2}) and (\ref{eqn:item3}) gives
\begin{equation} \label{eqn:emp_measure_Ln}
D(\Emp(y, L_n), \mu_n) \le 1/n + 7 \eps/2^{n+1} \to 0 \quad \textrm{ as } n \to \infty.
\end{equation}

We are now prepared to estimate $\Emp(y, j)$ for arbitrary $j>0$. There exists a unique integer $n = n(j)$ such that $L_{n-1} \le j < L_n$. Recall the definition of $y_n$ as a point that $\eps / 2^{n+1}$-approximately-traces $\xi_n$. Let $s_1, \dots s_{\ell_n}$ be the lengths of gaps in this tracing, as in Definition \ref{def:faithful2}. The orbit segment $(y_n, L_n)$ is made up of orbit segments corresponding to the elements of $\xi_n$ (interspersed by gaps of length $s_i$). Thus, the initial subsegment $(y, j)$ fully contains some number $b+1$ of those segments. Formally, $b$ is the unique integer $0 \le b < \ell_n$ such that
\[
B := L_{n-1} + bK_n + \sum_{i < b} s_i \le j < L_{n-1} + (b+1)K_n + \sum_{i \le b} s_i.
\]
We have $j - B \le K_n + s_b \le (1 + \eps / 2^{n+1}) K_n$. We also have
\begin{equation} \label{eqn:B_ineq}
\frac{j - B}{B} \le \frac{(1 + \eps / 2^{n+1}) K_n}{B} \le
\frac{(1 + \eps / 2^{n+1}) K_n}{L_{n-1}} \le \frac{1 + \eps / 2^{n+1}}{n},
\end{equation}
where the last inequality follows from (\ref{eqn:Ln_bound}). Next, note that $y_n$, over the orbit segment of length $B$, $\eps / 2^{n+1}$-approximately-traces the specification
\[
\zeta = \langle
(y_{n-1}, L_{n-1}), \underbrace{(x_n, K_n), \dots (x_n, K_n)}_{b \textrm{ times}}
\rangle.
\]
By the triangle inequality, we obtain
\begin{multline} \label{eqn:temp12}
D(\Emp(y, j), \mu_{n}) \le
D(\Emp(y, j), \Emp(y, B)) + D(\Emp(y, B), \Emp(y_n, B)) + \\
D(\Emp(y_n, B), \Emp(\zeta)) + D(\Emp(\zeta), \mu_{n}).
\end{multline}
Let us examine the terms on the right hand side of $\le$ in \eqref{eqn:temp12}:
\begin{itemize}
  \item $D(\Emp(y, j), \Emp(y, B)) \le (1 + \eps / 2^{n+1}) / n$ by (\ref{eqn:B_ineq}),
  \item $D(\Emp(y, B), \Emp(y_n, B)) \le \eps / 2^{n+1}$ by (\ref{eqn:yn_close_y}),
  \item $D(\Emp(y_n, B), \Emp(\zeta)) \le 4\eps / 2^{n+1}$ by $\eps / 2^{n+1}$-approximate-tracing and Lemma \ref{lem:close-segments},
  \item $D(\Emp(\zeta), \mu_{n}) \le \max \{ D(\Emp(y_{n-1}, L_{n-1}), \mu_{n}), D(\Emp(x_n, K_n), \mu_{n}) \}$.
  Now,
  \begin{itemize}
    \item[$\circ$] $D(\Emp(y_{n-1}, L_{n-1}), \mu_{n-1}) \to 0$ as $n \to \infty$, as shown by (\ref{eqn:emp_measure_Ln}), and by our assumption $D(\mu_n, \mu_{n-1}) \to 0$,
    \item[$\circ$] $D(\Emp(x_n, K_n), \mu_{n}) \le \eps / 2^{n+1} \to 0$ as $n \to \infty$.
  \end{itemize}
\end{itemize}
All of this, applied to the right hand side of $\le$ in \eqref{eqn:temp12}, shows that
\begin{equation}
D(\Emp(y, j), \mu_{n(j)}) \to 0 \quad \textrm{ as } j \to \infty.
\end{equation}
It follows easily that $V(y) \subseteq \{ \textrm{ limit points of the sequence } (\mu_k)_{k \geq 1} \}$. Indeed, if $\Emp(y, j_k) \to \bar{\mu}$ for some subsequence $(j_k)$, then also $\mu_{n(j_k)} \to \bar{\mu}$. Conversely, if $\mu_{n_k} \to \bar{\mu}$ for some $\bar{\mu}$, then $\Emp(y, L_{n_k - 1}) \to \bar{\mu}$.

It remains to address the ``moreover'' part of the lemma statement:
\begin{enumerate}
  \item Regardless of the sequence $(\mu_n)$, by (\ref{eqn:yn_close_y}) we have $\rho(y, y_0) \le \eps / 2$.
  \item Note that construction of $y_1 \dots y_n$ depends only on $\mu_1 \dots \mu_n$. Consider another sequence $(\mu_n')$, such that $\mu_j = \mu_j'$ for $j \le n$. Let $y'$ be the point obtained from this sequence. Then, since $y_n = y_n'$, we deduce from \eqref{eqn:yn_close_y} that
  \[
    \rho(y, y') \le \rho(y, y_n) + \rho(y_n', y') \le \eps /2^{n+1} + \eps /2^{n+1} = \eps / 2^n.
  \qedhere\]
\end{enumerate}
\end{proof}

We make the following observation:
\begin{cor} \label{cor:dense-gv}
If $(X,T)$ is  a Polish dynamical system with the strong approximate product structure
and $V\subseteq \MT(X)$ is a nonempty closed connected set, then the set $G(V)$ is dense in $X$.
For any $T$-invariant measure $\mu$, the set $G_\mu$ is dense.
\end{cor}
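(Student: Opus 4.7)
The plan is to invoke Lemma~\ref{lem:main} directly in both cases, using part~(\ref{it:ii}) to place the constructed point inside any prescribed ball, and handling the two assertions in parallel by feeding the lemma appropriately chosen driving sequences $(\mu_n) \subseteq \MT(X)$.

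For the statement about $G_\mu$, I would apply Lemma~\ref{lem:main}(\ref{it:i}) to the constant sequence $\mu_n \equiv \mu$. The hypothesis $D(\mu_n,\mu_{n+1}) \to 0$ is then trivial, and the conclusion \eqref{eqn:lemmain} specialises to $D(\Emp(y,j),\mu) \to 0$, which is precisely the statement $y \in G_\mu$. Combining this with part~(\ref{it:ii}), for any $y_0 \in X$ and $\eps > 0$ one obtains such a $y$ in $B(y_0,\eps)$, establishing density of $G_\mu$.

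For $G(V)$ with $V$ nonempty, closed, and connected in $(\MT(X), D)$, the real content is producing a sequence $(\mu_n) \subseteq V$ with $D(\mu_n,\mu_{n+1}) \to 0$ whose set of limit points equals $V$: Lemma~\ref{lem:main}(\ref{it:i}) will then yield a point $y$ with $V(y) = V$, i.e.\ $y \in G^V \cap G_V = G(V)$, and part~(\ref{it:ii}) will again place $y$ inside any prescribed $B(y_0,\eps)$. To build such a sequence I would fix a countable dense subset $\{\nu_k\}_{k \ge 1}$ of $V$ (which exists because $\MT(X)$ is separable, hence so is $V$), set $\mu_1 = \nu_1$, and for each $k \ge 2$ append to the sequence the intermediate vertices of a $(1/k)$-chain in $V$ joining the current endpoint to $\nu_k$. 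The resulting sequence then lies in $V$, has consecutive differences tending to $0$, meets every $\nu_k$, and so its set of limit points contains the dense set $\{\nu_k\}$; closedness of $V$ then forces this set of limit points to equal $V$.

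The only step requiring a separate, non-bookkeeping argument is the existence of $\delta$-chains in $V$ between arbitrary points of $V$ for every $\delta > 0$, and this is the one place where the connectedness hypothesis on $V$ is used. It follows from the standard fact that in any connected metric space $W$ the relation ``joinable by a $\delta$-chain inside $W$'' is an open equivalence relation on $W$ and therefore has a single class. I do not anticipate any genuine obstacle: once the driving sequence $(\mu_n)$ is arranged, the conclusion is an immediate consequence of Lemma~\ref{lem:main}.
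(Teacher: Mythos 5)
Your proposal is correct and follows essentially the same route as the paper: the paper likewise applies Lemma~\ref{lem:main} (parts~(\ref{it:i}) and~(\ref{it:ii})) to the constant sequence for $G_\mu$ and to a sequence in $V$ with $D(\mu_n,\mu_{n+1})\to 0$ whose set of limit points is $V$, merely asserting the existence of such a sequence, which your $\delta$-chain construction supplies. One small wording fix: visiting each $\nu_k$ once does not by itself make it a limit point, so either enumerate the dense subset with every element repeated infinitely often or note that a connected $V$ with more than one point has no isolated points; then the limit-point set, being closed and containing a dense subset of $V$, equals $V$.
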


\begin{proof}
For the first part, note that there exists a sequence $(\mu_n)_{n \ge 1}$ such that $D(\mu_n, \mu_{n+1}) \to 0$ and $\{ \textrm{limit points of } (\mu_n)_{n \ge 1} \} = V$. Thus an application of Lemma \ref{lem:main} shows that $G(V)$ is dense. Similarly, application of Lemma \ref{lem:main} to the constant sequence $\mu_n = \mu$ shows that $G_\mu$ is dense.
\end{proof}

\begin{thm} \label{thm:main}
If a Polish dynamical system $(X,T)$ has the strong approximate product structure,
then

\begin{enumerate}
\item \label{main1}
If $|\MT(X)|\geq 2$, then
for any $\mu \in \MT(X)$
the set $G_\mu$ of $\mu$ generic points is $\bP^0_3$-complete.
\end{enumerate}
Moreover, we have the following:
\begin{enumerate} \setcounter{enumi}{1}
\item \label{main2}
For every nonempty closed connected set $V\subseteq \MT(X)$ with nonempty
complement $U=\MT(X)\setminus V$, the sets $G_V$ and $G(V)$ are $\bP_3^0$-hard
(and so the  set $\prescript{U}{}G$ is $\bS_3^0$-hard).
Also, the set $G^V$ is $\bP^0_2$-complete.
The sets $G_V$, $G^V$, $G(V)$, and $\prescript{U}{}G$ are all dense in $X$.
\end{enumerate}
\end{thm}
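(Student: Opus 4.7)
The strategy is to prove the lower bounds via continuous Wadge reductions from suitable complete sets, driven by Lemma~\ref{lem:main}: from a continuously varying sequence $(\mu_n^s)\in\MT(X)^\omega$ with $D(\mu_n^s,\mu_{n+1}^s)\to 0$ we extract a point $y^s\in X$ depending continuously on $s$ by part~(\ref{it:iii}), whose empirical measures satisfy $D(\Emp(y^s,j),\mu_{n(j)}^s)\to 0$, and for which $V(y^s)$ equals the set of limit points of $(\mu_n^s)$. The upper bounds come from Section~\ref{sec:ub}. Density of $G_\mu$, $G(V)$, $G^V$, $G_V$ follows from Corollary~\ref{cor:dense-gv} (since $G^V,G_V\supseteq G(V)$), and density of $\prescript{U}{}G$ follows from $G_{\mu'}\subseteq \prescript{U}{}G$ for any $\mu'\in U$.

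For (\ref{main1}), pick any $\nu\in\MT(X)\setminus\{\mu\}$, set $\nu_k=(1-1/(k+1))\mu+(1/(k+1))\nu$, and reduce the $\bP^0_3$-complete set $C_3=\{s\in\omega^\omega:s(n)\to\infty\}$: define $(\mu_n^s)$ blockwise so that the $n$-th block is an $L_n$-long equally-spaced path of convex combinations from $\nu_{s(n-1)}$ to $\nu_{s(n)}$, with $L_n\to\infty$. All terms lie in the compact segment $[\mu,\nu]$, consecutive gaps are bounded by $D(\mu,\nu)/L_n\to 0$, and the block endpoints $\nu_{s(n)}$ converge to $\mu$ iff $s\in C_3$; hence $\Emp(y^s,j)\to\mu$ iff $s\in C_3$, which is the desired reduction.

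For (\ref{main2}) we use the same template after a careful choice of auxiliary measures. Fix $\mu'\in U$ and $\nu\in V$; setting $t^*=\sup\{t\in[0,1]:(1-t)\nu+t\mu'\in V\}$ (which lies in $[0,1)$ since $V$ is closed, $U$ is open, and $\mu'\in U$), we obtain $\mu_*=(1-t^*)\nu+t^*\mu'\in V$ and a sequence $\nu_k=(1-t_k)\nu+t_k\mu'\in U$ with $t_k\searrow t^*$, converging to $\mu_*$ along the compact segment $[\mu_*,\mu']\subseteq\MT(X)$ that meets $V$ only at $\mu_*$. Repeating the (\ref{main1}) construction with these data places $(\mu_n^s)$ on this segment, with limit points $\{\mu_*\}\subseteq V$ if $s\in C_3$ and containing some $\nu_m\in U$ otherwise, so $y^s\in G_V$ iff $s\in C_3$. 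This gives $\bP^0_3$-hardness of $G_V$, and complementation yields $\bS^0_3$-hardness of $\prescript{U}{}G$. For $G(V)$-hardness, prepend to the $n$-th block a filler that traverses a finite $1/n$-dense subset $F_n\subseteq V$ via a $(1/n)$-chain inside $V$ beginning and ending at $\mu_*$; such a chain exists because $V$ is connected, hence any two points of $V$ can be joined by an $\eps$-chain in $V$ for every $\eps>0$. The filler forces $V\subseteq V(y^s)$ unconditionally while preserving the dichotomy for $V(y^s)\subseteq V$, so $y^s\in G(V)$ iff $s\in C_3$. Finally, for $\bP^0_2$-hardness of $G^V$, reduce $P_\infty=\{x\in 2^\omega:x(n)=1\text{ i.o.}\}$ by placing such a $V$-filler in block $n$ when $x(n)=1$ and the constant $\mu'$ when $x(n)=0$: if $x\in P_\infty$ every open subset of $V$ is visited infinitely often, while if $x\notin P_\infty$ the sequence is eventually the constant $\mu'$ and $V(y^x)=\{\mu'\}$, which misses $V$ entirely.

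The principal technical obstacle is maintaining $D(\mu_n^s,\mu_{n+1}^s)\to 0$ while keeping the limit-point set under the tight control that the reductions demand. For the segment-based constructions this is immediate because the sequences live in a compact convex one-dimensional subset of $\MT(X)$. For the filler-based constructions the key observation is that in a connected metric space any pair of points can be joined by a finite $\eps$-chain for every $\eps>0$, which lets us assemble the $(1/n)$-chains inside $V$ with consecutive gaps vanishing and all limit points remaining in $V$; the inter-block transitions on $[\mu_*,\mu']$ are handled by equally-spaced convex combinations exactly as in (\ref{main1}).
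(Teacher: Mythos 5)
Your proposal is correct in substance and runs on the same engine as the paper's proof: a continuous reduction of $\mathcal{C}_3=\{\beta\in\omega^\omega:\beta(n)\to\infty\}$ built by feeding Lemma~\ref{lem:main} a continuously varying sequence in $\MT(X)$ with vanishing consecutive Prohorov gaps, with continuity from part~(\ref{it:iii}), genuine $G_\mu$-membership (not just $V(y)=\{\mu\}$) from \eqref{eqn:lemmain}, and density from part~(\ref{it:ii}). Your choice of the pivot measure $\mu_*$ on the segment $[\nu,\mu']$ is the same device as the paper's $\bar\mu=t_0\mu'+(1-t_0)\nu$. Where you genuinely diverge: (a) the paper fixes once and for all a sequence $(\mu_n)\subseteq V$ with $D(\mu_n,\mu_{n+1})\to 0$ whose limit-point set is all of $V$ (asserted from separability and connectedness) and then perturbs it by a single scalar coefficient $\psi^\beta(j)$ toward $\nu$, so that one construction simultaneously handles $G(V)$, $G_V$ and $\prescript{U}{}G$; you instead build the sequence blockwise, treating $G_V$/$\prescript{U}{}G$ without a filler and adding a $V$-filler (via $\eps$-chains, which is in effect a proof of the existence statement the paper only asserts) for $G(V)$; (b) for $\bP^0_2$-completeness of $G^V$ you give a direct reduction from the $\bP^0_2$-complete set $\{x\in 2^\omega: x(n)=1 \text{ i.o.}\}$, whereas the paper argues by Baire category from the density of $G^V$ and of $G_\nu\subseteq X\setminus G^V$. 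Both of your alternatives are valid and arguably more self-contained.

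One small repair is needed: you ask for a \emph{finite} $1/n$-dense subset $F_n\subseteq V$, but in the non-compact setting a closed connected $V\subseteq\MT(X)$ need not be totally bounded, so such finite nets may not exist. Replace this by fixing a countable dense subset $\{\rho_1,\rho_2,\dots\}$ of $V$ (separability of $\M(X)$) and letting the block-$n$ filler visit $\rho_1,\dots,\rho_n$ exactly, joined inside $V$ by $(1/n)$-chains; each $\rho_i$ is then a limit point of the measure sequence, and since $V(y)$ is closed this still forces $V\subseteq V(y)$, while all filler measures lie in $V$ so no unwanted limit points appear. With that adjustment (and keeping the chain lengths independent of the input so that Lemma~\ref{lem:main}\eqref{it:iii} yields continuity), your argument goes through.
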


\begin{proof}
We give the proof of (ii),
and remark at the end how the proof shows (\ref{main1}). Let $V\subseteq \MT(X)$
be as in (ii) and let $U=\MT(X) \setminus V$.
Let $\cC_3\subseteq  \omega ^ \omega$ be the set $\cC_3=\{ \beta\in  \omega ^ \omega \colon \beta(n) \to \infty\}$.
We construct a continuous reduction map
$\pi\colon \omega^\omega\to X$ such that if $\beta \in \mathcal{C}_3$ then $\pi(\beta)\in G(V)\subseteq G_V$,
and if $\beta\notin \mathcal{C}_3$ then $\pi(\beta)\in \prescript{U}{}G=\MT(X)\setminus G_V$.
This will show that $G_V$ and $G(V)$ are $\bP^0_3$-complete.

Take $\nu \in U$ and let $\mu'\in V$. Consider the set
$E=\{0\le t\le 1: t\mu'+(1-t)\nu\in V\}$. It is easy to see that $E\neq \emptyset$ is closed, $1\in E$,
and $0\notin E$. Let $t_0=\min E$ and $\bar\mu=t_0\mu'+(1-t_0)\nu\in V$. 
Hence $\alpha\bar\mu+(1-\alpha)\nu\in U$ for $\alpha<1$.
Since $V$ is closed and connected and $\MT(X)$ is separable there is a
sequence $(\mu_n)_{n\ge 1}$ of $T$-invariant measures satisfying
\begin{gather}
  D(\mu_n,\mu_{n+1})\to  0\quad\text{ as $n\to\infty$}, \label{cond:D-to-0}\\
  \bigcap_{N=1}^\infty\overline{\{\mu_n:n\ge N\}}=V .\label{cond:density-in-V}
\end{gather}
That is, the set of limit points of sequence $(\mu_n)_{n \ge 1}$ is equal to $V$.

Select a sequence $(n_k)_{k\ge 1}$ such that $\mu_{n_k}\to\bar\mu$ as $k\to\infty$, and additionally $n_{k+1} - n_k \to \infty$. Given a $\beta \in \omega ^ \omega$, we define a function
\begin{equation} \label{eqn:psibeta}
\psi^\beta (j) = \frac{j - n_k}{n_{k+1} - n_k} \frac{1}{\beta(k) + 1} + \frac{n_{k+1} - j}{n_{k+1} - n_k} \frac{1}{\beta(k+1) + 1}
\quad \textrm{ for } n_k \le j < n_{k+1}
\end{equation}
and a sequence of measures
\[
\mu^\beta_j = \psi^\beta (j) \nu + (1 - \psi^\beta (j) ) \mu_j.
\]
Note that $\psi^\beta(n_k) = 1/\beta(k)$ for all $k$, and $\psi^\beta$ is ``linear'' between $n_k$ and $n_{k+1}$. Thus we have
\begin{equation*}
| \psi^\beta(j) - \psi^\beta(j+1) | \le \frac{1}{n_{k+1} - n_k} \to 0 \quad \textrm{ as } j \to \infty.
\end{equation*}
It follows that $D(\mu^\beta_j, \mu^\beta_{j+1}) \to 0$ as $j \to \infty$. Thus we can apply Lemma \ref{lem:main} to the sequence $(\mu^\beta_j)$, and obtain a point $\pi(\beta)$ such that $V(\pi(\beta)) = \{ \textrm{ limit points of } \mu^\beta_j \}$. Let us consider cases:

\begin{itemize}
  \item If $\beta(n) \in \mathcal{C}_3$, then $\psi^\beta(j) \to 0$ as $j \to \infty$. It follows that $D(\mu^\beta_j, \mu_j) \to 0$, and consequently $V(\pi(\beta)) = \{ \textrm{ limit points of } \mu^\beta_j \} = \{ \textrm{ limit points of } \mu_j \} = V$.
  \item If $\beta(n) \not \in \mathcal{C}_3$, then there exists a subsequence $(m_k)$ such that $\beta(m_k) = c$. Then
  \[
    \mu^\beta_{n_{m_k}} = \frac{1}{c} \nu + \frac{c-1}{c} \mu_{n_{m_k}} \to \frac{1}{c} \nu + \frac{c-1}{c} \bar{\mu}
    \not \in V
  \]
  and it follows that $V(\pi(\beta))  \not \subseteq V$.
\end{itemize}
Finally, notice that $\mu^\beta_1, \dots \mu^\beta_{n_k}$ depend only on $\beta(1), \dots \beta(n_k)$. Thus the map $\pi$ is continuous by  Lemma \ref{lem:main}\eqref{it:iii}. This shows that $G(V)$ and $G_V$ are $\bP^0_3$-hard, and $^U G$ is $\bS_3^0$-hard. Note that Lemma \ref{lem:main}\eqref{it:ii} shows that all those sets are dense.

Suppose now that $|\MT(X)|\geq 2$, and let $\mu \in \MT(X)$. We
reapply the above proof for the case $V=\{ \mu\}$. Since $|\MT(X)|\geq 2$,
$U=\MT(X)\setminus V \neq \emptyset$. We can take the measures
$(\mu_n)$ of the previous proof to be all equal to $\mu$.
We also let $\nu \in U=\MT(X)\setminus V$. For $\beta \in \mathcal{C}_3$,
the previous argument shows in this case that
not only is $\pi(\beta) \in G(\{ \mu\})$, but in fact
$\pi(\beta) \in G_\mu$. This follows from \eqref{eqn:lemmain} in Lemma \ref{lem:main}\eqref{it:i}. For $\beta \notin \mathcal{C}_3$, the previous argument
gives that $\pi(\beta) \notin G(\{\mu\})$, and so $\pi(\beta) \notin G_\mu$.
Thus, $\beta \in \mathcal{C}_3$ iff $\pi(\beta)\in G_\mu$, and so
$G_\mu$ is $\bP^0_3$-hard (and thus $\bP^0_3$-complete by Fact~\ref{upper:Gmu}).

To prove that $G^V$ is $\bP^0_2$-complete, first recall that it is a $\bP^0_2$ set (Fact \ref{upper:GW}). Again by Lemma \ref{lem:main}\eqref{it:ii}, we see that $G^V$ is dense. Given $\nu \notin V$, we know that $G_\nu$ is dense. Note that $G_\nu \subseteq X \setminus G^V$, thus both $G^V$ and its complement are dense. It follows that $G^V$ is $\bP^0_2$-complete (indeed, suppose that $G^V$ is $\bS^0_2$-complete; then both $G^V$ and its complement would be dense $\bP^0_2$ sets, which is impossible by the Baire Category Theorem).
This completes the proof of Theorem~\ref{thm:main}.
\end{proof}

Combined with the upper bounds of Facts~\ref{upper:UG} and \ref{upper:GV},
Theorem~\ref{thm:main} has the following immediate corollary.

\begin{cor}
If $(X,T)$ is a compact dynamical system with the strong approximate product structure
and $V\subseteq \MT(X)$ is a nonempty closed connected set whose complement
$U=\MT(X)\setminus V$ is also nonempty, then
\begin{enumerate}
\item the sets $G_V$ and $G(V)$ are $\bP_3^0$-complete,
\item the set $\prescript{U}{}G$  is $\bS_3^0$-complete.
\end{enumerate}
\end{cor}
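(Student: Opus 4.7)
The proof will be a direct combination of the upper bounds established earlier with the hardness results from Theorem~\ref{thm:main}. The structure in each case is identical: exhibit membership in the pointclass using the compactness-dependent upper bound facts, then invoke the matching lower bound from Theorem~\ref{thm:main}(\ref{main2}).

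For the first part, I would argue as follows. Since $V\subseteq \MT(X)$ is closed and $X$ is compact, Fact~\ref{upper:UG} yields $G_V\in \bP^0_3$, while Fact~\ref{upper:GV} (using compactness in an essential way to upgrade the co-analytic bound) yields $G(V)\in \bP^0_3$. On the other hand, Theorem~\ref{thm:main}(\ref{main2}) gives that both $G_V$ and $G(V)$ are $\bP^0_3$-hard under precisely the hypotheses at hand: $V$ closed and connected, $U=\MT(X)\setminus V$ nonempty, and $(X,T)$ possessing the strong approximate product structure. Membership together with hardness gives completeness, establishing~(i).

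For the second part, the complement $U=\MT(X)\setminus V$ is open since $V$ is closed. Fact~\ref{upper:UG} then provides $\prescript{U}{}G\in \bS^0_3$, and Theorem~\ref{thm:main}(\ref{main2}) supplies the matching $\bS^0_3$-hardness, so $\prescript{U}{}G$ is $\bS^0_3$-complete. Alternatively, one could deduce this directly from the first part via the identity $\prescript{U}{}G = X\setminus G_{V}$, using the fact that $\bP^0_3$-completeness of $G_V$ is equivalent to $\bS^0_3$-completeness of its complement. Since every ingredient is already in place, there is no real obstacle here; the corollary is genuinely immediate, and the only thing to be careful about is to check that the hypotheses of both the upper-bound facts (compactness of $X$ and closedness of $V$) and of Theorem~\ref{thm:main}(\ref{main2}) (strong approximate product structure, $V$ nonempty closed connected, $U$ nonempty) are all satisfied by the standing assumptions.
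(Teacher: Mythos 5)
Your proposal is correct and follows exactly the paper's (implicit) argument: the paper derives this corollary immediately by combining the compact-case upper bounds of Facts~\ref{upper:UG} and~\ref{upper:GV} with the hardness statements of Theorem~\ref{thm:main}(\ref{main2}), which is precisely what you do. The alternative deduction of (ii) from (i) via $\prescript{U}{}G = X\setminus G_V$ is also fine, though not needed.
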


Observe that for any ergodic probability measure $\mu$, it follows from the
Birkhoff ergodic theorem that
for $\mu$-almost all $x$ we have $x\in B_\phi(\int\phi\,d\mu)$.

%
%
%
%
%
%
%

\begin{thm} \label{thm:phi_nocompact}
Let $(X,T)$ be a Polish dynamical  system with the strong approximate product structure.
If $\phi\in\mathcal{C}_b(X)$ and $\alpha\in\R$ are such that
\[
V_\phi(\alpha)=\{\mu\in\MT(X)\colon \int\phi\,d\mu=\alpha\}\neq\emptyset
\]
and $U=\MT(X)\setminus V_\phi(\alpha)\neq\emptyset$, then
$B_\phi(\alpha)$ is a $\bP^0_3$-complete set and $I_\phi(X,T)$ is a $\bS^0_3$-complete set.
\end{thm}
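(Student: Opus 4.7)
Since the upper bounds $B_\phi(\alpha)\in\bP^0_3$ and $I_\phi(X,T)\in\bS^0_3$ are already recorded in Facts~\ref{upper:Bfi} and \ref{upper:irregular-phi}, my plan is to prove the matching hardness statements with a single continuous Wadge reduction. Specifically, I will construct a continuous map $\pi\colon\ww\to X$ such that for every $\beta\in\ww$ one has $\pi(\beta)\in B_\phi(\alpha)$ iff $\beta\in\mathcal{C}_3$ and $\pi(\beta)\in I_\phi(X,T)$ iff $\beta\notin\mathcal{C}_3$, where $\mathcal{C}_3=\{\beta\in\ww:\beta(n)\to\infty\}$ is the standard $\bP^0_3$-complete subset of $\ww$. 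Together with the upper bounds this will give $\bP^0_3$-completeness of $B_\phi(\alpha)$ and $\bS^0_3$-completeness of $I_\phi(X,T)$.

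To set up $\pi$, I fix $\bar\mu\in V_\phi(\alpha)$ and $\nu\in U$, set $\gamma=\int\phi\,d\nu\neq\alpha$, and choose positive integers $n_1<n_2<\cdots$ with $n_{k+1}-n_k\to\infty$ (say $n_k=k^2$). For $\beta\in\ww$ I define $\psi^\beta\colon\N\to[0,1]$ by $\psi^\beta(n_{2k})=1/(\beta(k)+1)$ and $\psi^\beta(n_{2k+1})=0$, interpolating linearly in between, and then let $\mu^\beta_j=(1-\psi^\beta(j))\bar\mu+\psi^\beta(j)\nu\in\MT(X)$. The spacing forces $|\psi^\beta(j+1)-\psi^\beta(j)|\le 1/(n_{k+1}-n_k)$ whenever $j\in[n_k,n_{k+1})$, so $D(\mu^\beta_{j+1},\mu^\beta_j)\to 0$; hence Lemma~\ref{lem:main} applies and yields a point $y=\pi(\beta)\in X$ together with $L_n\nearrow\infty$ such that $D(\Emp(y,j),\mu^\beta_{n(j)})\to 0$, where $n(j)$ is defined by $L_{n(j)-1}\le j<L_{n(j)}$ (and hence $n(j)\to\infty$ and attains every positive integer value on a nonempty interval of $j$'s). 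Continuity of $\pi$ follows from Lemma~\ref{lem:main}\eqref{it:iii}: if $\beta,\beta'\in\ww$ agree on their first $K+1$ coordinates, then $\mu^\beta_n=\mu^{\beta'}_n$ for all $n\le n_{2K}$, so $\rho(\pi(\beta),\pi(\beta'))\le 2^{-n_{2K}}$.

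Next, since $\phi$ is bounded continuous and $D$ metrizes \weakst\ convergence, $A_j\phi(y)-\int\phi\,d\mu^\beta_{n(j)}\to 0$, while $\int\phi\,d\mu^\beta_n=\alpha+\psi^\beta(n)(\gamma-\alpha)$, so the asymptotics of $A_j\phi(y)$ are governed by those of $\psi^\beta$. If $\beta\in\mathcal{C}_3$, then $\psi^\beta(n_{2k})=1/(\beta(k)+1)\to 0$ while $\psi^\beta(n_{2k+1})=0$, and the linear interpolation forces $\psi^\beta(n)\to 0$, giving $A_j\phi(y)\to\alpha$; hence $\pi(\beta)\in B_\phi(\alpha)\subseteq X\setminus I_\phi(X,T)$. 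If $\beta\notin\mathcal{C}_3$, there exist $C\in\N$ and $m_\ell\to\infty$ with $\beta(m_\ell)\le C$, so $\psi^\beta(n_{2m_\ell})\ge 1/(C+1)$ infinitely often, while $\psi^\beta(n_{2k+1})=0$ for every $k$; because $n(j)$ is onto $\N$, the sequence $A_j\phi(y)$ has a limit point equal to $\alpha$ and another limit point at distance at least $|\gamma-\alpha|/(C+1)>0$ from $\alpha$, so $\pi(\beta)\in I_\phi(X,T)\subseteq X\setminus B_\phi(\alpha)$.

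The main obstacle is that the two reductions must be carried out by the \emph{same} continuous $\pi$. Simply taking $\psi^\beta(n_k)=1/(\beta(k)+1)$ would reduce $\mathcal{C}_3$ to $B_\phi(\alpha)$, but would fail for $I_\phi(X,T)$: a $\beta$ that is eventually constant would produce a sequence $A_j\phi(y)$ that converges to a value different from $\alpha$, so $\pi(\beta)$ would lie outside both $B_\phi(\alpha)$ and $I_\phi(X,T)$. Interleaving the test values $1/(\beta(k)+1)$ with the forced zeroes on the odd-indexed nodes $n_{2k+1}$ creates the oscillation needed on $\ww\setminus\mathcal{C}_3$, and the gap condition $n_{k+1}-n_k\to\infty$ is the technical lever that reconciles this oscillation with the smoothness requirement $D(\mu^\beta_j,\mu^\beta_{j+1})\to 0$ needed to invoke Lemma~\ref{lem:main}.
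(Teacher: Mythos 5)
Your proposal is correct and follows essentially the same route as the paper's proof: the same interleaved construction $\psi^\beta(n_{2k})=1/(\beta(k)+1)$, $\psi^\beta(n_{2k+1})=0$ with linear interpolation and spacing $n_{k+1}-n_k\to\infty$, the same application of Lemma~\ref{lem:main} (with part~\eqref{it:iii} for continuity), and the same dichotomy giving a single reduction of $\mathcal{C}_3$ to $B_\phi(\alpha)$ and of its complement to $I_\phi(X,T)$. The only cosmetic difference is that you bypass the paper's preliminary choice of $\bar\mu$ with $t\nu+(1-t)\bar\mu\in U$ for $t>0$, which is indeed automatic here since $\int\phi\,d(t\nu+(1-t)\bar\mu)=t\gamma+(1-t)\alpha\neq\alpha$.
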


\begin{proof}
Pick any measure $\nu \in U$. As in the proof of Theorem \ref{thm:main}, we can pick a measure $\bar{\mu} \in V_\phi(\alpha)$ such that $t \nu + (1-t) \bar{\mu} \in U$ for all $0 < t \le 1$. We select an arbitrary sequence $n_k \nearrow \infty$, with $n_{k+1} - n_k \to \infty$. Given a sequence $\beta \in  \omega ^ \omega$, we define function $\psi^\beta \colon \omega \to [0, 1]$ by setting $\psi^\beta(n_{2k}) = 1/(\beta(k) + 1)$, $\psi^\beta(n_{2k+1}) = 0$, and interpolating linearly on other arguments, that is,
\begin{equation}
\psi^\beta (j) = \frac{j - n_k}{n_{k+1} - n_k} \psi^\beta(n_k) + \frac{n_{k+1} - j}{n_{k+1} - n_k} \psi^\beta(n_{k+1}).
\quad \textrm{ for } n_k < j < n_{k+1}.
\end{equation}

It is straightforward to check that
\begin{enumerate}
  \item $|\psi^\beta(j) - \psi^\beta(j+1)| \to 0$ as $j \to \infty$,
  \item if $\beta \to \infty$, then $\psi^\beta(j) \to 0$,
  \item if not $\beta \to \infty$, then both $0$ and $1 / (\liminf \beta(j) + 1)$ are limit points of sequence $\psi^\beta(j)$.
\end{enumerate}
We define measures
\[
\mu^\beta_j = \psi^\beta(j) \nu + (1 - \psi^\beta(j)) \bar{\mu}.
\]
From the properties of $\psi^\beta$ listed above it follows that
\begin{enumerate}
  \item $D(\mu^\beta_j, \mu^\beta_{j+1}) \to 0$ as $j \to \infty$, thus we can invoke Lemma \ref{lem:main} to obtain a point $y^\beta$,
  \item if $\beta \to \infty$, then $\mu_j^\beta \to \bar{\mu}$ and consequently $y^\beta \in G_{\bar{\mu}} \subseteq B_\phi(\alpha)$,
  \item if not $\beta \to \infty$, then $y^\beta \in I_\phi(X,T)$.
\end{enumerate}
Note that the measures $\mu^\beta_1 \dots \mu^\beta_{n_{2k}}$ depend only on the values $\beta(1) \dots \beta(k)$. This, together with part (ii) of Lemma \ref{lem:main} implies that the map $\pi \colon  \omega ^ \omega \ni \beta \mapsto y^\beta \in X$ is continuous. By (ii) and (iii), $\pi$ is a continuous reduction of $\mathcal{C}_3$ to $B_\phi(\alpha)$, and also a reduction of $ \omega ^ \omega \setminus \mathcal{C}_3$ to $I_\phi(X,T)$. Together with Fact \ref{upper:irregular} this finishes the proof.
\end{proof}
From the proof above, we also obtain the following corollary:

\begin{cor}
If $(X,T)$ is  a Polish dynamical system with the strong approximate product structure
with respect to a compatible metric $\rho$ and there are at least two $T$-invariant measures, then
\begin{enumerate}
\item the quasi-regular set $Q(X,T)$ is $\bP_3^0$-hard, thus
$Q(X,T)$ is $\bP^0_3$-complete.
\item the irregular set $I(X,T)$ is $\bS_3^0$-hard, and thus
$I(X,T)$ is $\bS_3^0$-complete.
\end{enumerate}
\end{cor}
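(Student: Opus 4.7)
The plan is to observe that the proof of Theorem~\ref{thm:phi_nocompact} already produces, essentially for free, a Wadge reduction witnessing both parts of the corollary; we only need to (a) upgrade the $\phi$-specific lower bound to one for $Q(X,T)$ and $I(X,T)$ themselves and (b) pair it with the upper bounds from Fact~\ref{upper:irregular}.

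First I would reduce to the setting of Theorem~\ref{thm:phi_nocompact}. Since $|\MT(X)|\ge 2$, fix two distinct invariant measures $\mu_1,\mu_2\in\MT(X)$. Because the weak${}^*$ topology is generated by the functionals $\mu\mapsto \int \phi\,d\mu$ for $\phi\in C_b(X)$, there must exist $\phi\in C_b(X)$ with $\int\phi\,d\mu_1\ne \int\phi\,d\mu_2$. Setting $\alpha=\int\phi\,d\mu_1$, we obtain $\mu_1\in V_\phi(\alpha)$ and $\mu_2\in U:=\MT(X)\setminus V_\phi(\alpha)$, so both $V_\phi(\alpha)$ and $U$ are nonempty. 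Hence the hypotheses of Theorem~\ref{thm:phi_nocompact} are satisfied for this choice of $\phi$ and $\alpha$.

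Next I would invoke the continuous reduction $\pi\colon \omega^\omega\to X$ constructed in the proof of Theorem~\ref{thm:phi_nocompact}. There it is shown that for $\beta\in\mathcal{C}_3$ one has $\pi(\beta)\in G_{\bar\mu}\subseteq B_\phi(\alpha)$, while for $\beta\notin\mathcal{C}_3$ one has $\pi(\beta)\in I_\phi(X,T)$. Using the inclusions
\[
G_{\bar\mu}\ \subseteq\ Q(X,T) \qquad\text{and}\qquad I_\phi(X,T)\ \subseteq\ I(X,T),
\]
together with the fact that $I(X,T)$ and $Q(X,T)$ partition $X$, this yields
\[
\pi^{-1}(Q(X,T))=\mathcal{C}_3 \qquad\text{and}\qquad \pi^{-1}(I(X,T))=\omega^\omega\setminus\mathcal{C}_3.
\]
Since $\mathcal{C}_3\subseteq\omega^\omega$ is a well-known $\bP^0_3$-complete set (and therefore $\omega^\omega\setminus\mathcal{C}_3$ is $\bS^0_3$-complete), the same map $\pi$ Wadge-reduces $\mathcal{C}_3$ to $Q(X,T)$ and $\omega^\omega\setminus\mathcal{C}_3$ to $I(X,T)$. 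This gives $\bP^0_3$-hardness of $Q(X,T)$ and $\bS^0_3$-hardness of $I(X,T)$.

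Finally, combining these hardness statements with the upper bounds from Fact~\ref{upper:irregular}, which say that $Q(X,T)\in\bP^0_3$ and $I(X,T)\in\bS^0_3$ for any Polish dynamical system, yields completeness at both levels. There is essentially no obstacle here beyond observing that we are free to choose $\phi$ and $\alpha$ so that Theorem~\ref{thm:phi_nocompact} applies; the only mild subtlety is the extraction of a separating $\phi\in C_b(X)$ from the assumption $|\MT(X)|\ge 2$, which is immediate from the definition of the weak${}^*$ topology on $\M(X)$.
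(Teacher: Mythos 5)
Your proposal is correct and follows essentially the same route as the paper: both take the continuous map $\pi$ built in the proof of Theorem~\ref{thm:phi_nocompact}, observe that $\pi(\mathcal{C}_3)\subseteq G_{\bar\mu}\subseteq Q(X,T)$ and $\pi(\omega^\omega\setminus\mathcal{C}_3)\subseteq I_\phi(X,T)\subseteq I(X,T)$, and combine the resulting hardness with the upper bounds of Fact~\ref{upper:irregular}. Your extra remarks (extracting a separating $\phi\in C_b(X)$ from $|\MT(X)|\ge 2$ via the weak${}^*$ topology, and using the partition $X=Q(X,T)\cup I(X,T)$ to get exact preimages) are correct and merely make explicit what the paper leaves implicit.
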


\begin{proof}
Pick any two $T$-invariant measures $\bar{\mu} \neq \nu$ and a function $\phi \in \mathcal{C}_b(X)$ such that $\alpha = \int_X \phi \, d \mu \neq \int_X \phi \, d \nu$. In the proof of Theorem \ref{thm:phi_nocompact}, we defined a continuous map $\pi \colon  \omega ^ \omega \to X$ such that $\pi(\mathcal{C}_3) \subseteq G_{\bar{\mu}}$ and $\pi( \omega ^ \omega \setminus \mathcal{C}_3) \subseteq I_\phi(X,T)$. Since $G_{\bar{\mu}} \subseteq Q(X,T)$ and $ I_\phi(X,T) \subseteq I(X,T)$, the map $\pi$ reduces $\mathcal{C}_3$ to $Q(X,T)$, and reduces $ \omega ^ \omega \setminus \mathcal{C}_3$ to $I(X,T)$. In Fact \ref{upper:irregular} we have shown the matching upper bounds.
\end{proof}

\section{Compactness and Higher level completeness} \label{sec:hlc}

In Theorem~\ref{thm:main} it was shown for a Polish dynamical system $(X,T)$ satisfying
a weak form of specification (the strong approximate product structure),
and having at least two invariant measures, that for
every closed connected $V \subseteq \cM_T$ which is non-trivial (i.e., $V\neq \emptyset$, $V\neq \cM_T$),
we have that $G(V)$ is $\bp^0_3$-hard. When $X$ is compact, Fact~\ref{upper:GV} shows that
$G(V)$ is a $\bp^0_3$ set (this holds for any closed $V\subseteq \MT(X)$), and so
$G(V)$ is a $\bp^0_3$-complete set in this case. Similar results hold for the sets
$G_V=\{ x \in X\colon V(x) \subseteq V\}$ (where $V\subseteq \MT(X)$ is closed) and $\prescript{U}{}G=\{x\in X: V(x)\cap U\neq\emptyset\}$ (where $U\subseteq \MT(X)$ is open).
When $X$ is not compact, the natural computation shows that
$G(V)$ and $G_V$ are $\bp^1_1$ sets and $\prescript{U}{}{G}$ is a $\bs^1_1$ set (see Facts \ref{fact:UG-analytic} and~\ref{GV_co-analytic}). Recall from Fact~\ref{upper:GW} that in this case we still have that
for any closed $V\subseteq \cM_T$ that $G^V=\{ x \in X \colon V\subseteq V(x)\}$ is $\bp^0_2$.
A natural question is whether the compactness of $X$ is necessary to reduce the complexity of $G_V$, $G(V)$, and $\prescript{U}{}{G}$
to a Borel set. The next result shows that this is the case, as in general
the sets $G(V)$ and $G_V$ may be a $\bp^1_1$-complete sets, while $\prescript{U}{}{G}$ is a $\bp^1_1$-complete set. Thus, we see a strong dichotomy in the complexity of
these sets between the compact and non-compact cases.

\begin{thm} \label{thm:ncc}
There is a Polish dynamical system $(X,T)$, so $T \colon X\to X$ is continuous, and
a closed, connected, nonempty $V \subseteq \MT(X)$ such that $G(V)$ is $\bp^1_1$-complete. In fact,
we may take $V=\{ \mu\}$ for some $\mu \in \MT(X)$, and $(X,T)$ to have the specification property.
Furthermore, setting $U=\{\nu\in\MT(X):\nu\neq\mu\}$ we obtain
that $G_V$ is $\bp^1_1$-complete and $\prescript{U}{}G$ is $\bs^1_1$-complete.
\end{thm}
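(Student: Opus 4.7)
The upper bounds $G_V, G(V)\in\bp^1_1$ and $\prescript{U}{}G\in\bs^1_1$ are already given by Facts~\ref{upper:GV}, \ref{fact:UG-analytic}, and~\ref{GV_co-analytic}, so the entire task is to exhibit a single example attaining all three. The plan is to build a non-compact Polish system $(X,T)$ with the specification property together with a measure $\mu\in\MT(X)$ (taking $V=\{\mu\}$ and $U=\MT(X)\setminus\{\mu\}$), and then to produce a continuous reduction from the canonical $\bp^1_1$-complete set $\mathrm{WF}\subseteq 2^{\omega^{<\omega}}$ of well-founded trees on $\omega$ into $G(\{\mu\})=G_{\{\mu\}}$. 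Since $\prescript{U}{}G=X\setminus G_{\{\mu\}}$, the same map will simultaneously reduce $\mathrm{IF}=2^{\omega^{<\omega}}\setminus\mathrm{WF}$ into $\prescript{U}{}G$, giving the $\bs^1_1$-hardness of the latter.

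For the carrier system a natural choice is a countable-alphabet full shift, e.g.\ $X=\omega^\omega$ with the shift $T$ and a complete metric compatible with the product topology; such shifts enjoy a particularly strong form of specification, because finite orbit segments can be concatenated with one-symbol padding to yield an exact tracer, and in particular satisfy the strong approximate product structure. The crucial feature is the non-compactness of both $X$ and $\M(X)$: the Dirac masses $\omega_k:=\delta_{\bar k}$ at the fixed points $\bar k=k\,k\,k\ldots$ constitute a sequence in $\MT(X)$ with no weak-$*$ accumulation point in $\M(X)$ as $k\to\infty$, because any candidate limit $\rho$ would have to assign zero mass to every first-symbol cylinder, contradicting $\rho(X)=1$. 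It is precisely this escape-to-infinity phenomenon that separates the always-$\bp^0_3$ set $G_\mu=\{x:\Emp(x,n)\to\mu\}$ from the strictly larger $G(\{\mu\})=\{x:V(x)=\{\mu\}\}$ and makes room for the latter to attain $\bp^1_1$-complexity.

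The reduction is assembled through Lemma~\ref{lem:main}. For each tree $\tau\subseteq\omega^{<\omega}$ one constructs a sequence of $T$-invariant measures $(\mu_n^\tau)_{n\ge 1}$ with $D(\mu_n^\tau,\mu_{n+1}^\tau)\to 0$, using a fixed enumeration of $\omega^{<\omega}$ and a selection rule that alternates between long $\mu$-phases and short excursions directed either at a second fixed invariant measure $\nu\neq\mu$ or at one of the escape measures $\omega_k$. The rule is designed so that excursions at slots corresponding to nodes on genuinely infinite branches of $\tau$ have target $\nu$, while excursions at slots corresponding to nodes on terminating chains have target $\omega_k$ with $k\to\infty$ as the nodes get deeper in $\tau$. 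Feeding $(\mu_n^\tau)$ into Lemma~\ref{lem:main} yields $x_\tau$ with $V(x_\tau)$ equal to the weak-$*$ accumulation set of $(\mu_n^\tau)$ in $\M(X)$, and part~\eqref{it:iii} of the lemma provides continuity of the map $\tau\mapsto x_\tau$ because each $\mu_n^\tau$ depends on only finitely many nodes of $\tau$.

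The main technical obstacle is calibrating the selection rule so that the $\Sigma^1_1$ quantifier ``$\exists$ infinite branch of $\tau$'' corresponds exactly to the $\Sigma^1_1$ quantifier ``$(\mu_n^\tau)$ has a weak-$*$ accumulation point other than $\mu$'' in $\M(X)$. Because König's lemma fails on $\omega^{<\omega}$, well-founded trees may still carry arbitrarily long finite chains of extensions, so naive local cascade tests do not distinguish $\mathrm{WF}$ from $\mathrm{IF}$; the resolution is to use the non-compactness of $\M(X)$ as an absorber, routing every ``false positive'' signal into an $\omega_k$-excursion with $k$ tending to infinity so that its contribution dissolves from the accumulation set, while ensuring that only genuine infinite branches can synchronise the excursion targets into a common weak-$*$ accumulation point at $\nu$. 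Once this calibration is done, for $\tau\in\mathrm{WF}$ all but finitely many excursions escape and $V(x_\tau)=\{\mu\}$, so $x_\tau\in G(\{\mu\})$; for $\tau\in\mathrm{IF}$ an infinite branch supplies infinitely many $\nu$-directed excursions, $\nu\in V(x_\tau)$, and $x_\tau\notin G(\{\mu\})$. Combined with the upper bounds, this yields $\bp^1_1$-completeness of $G(V)$ and $G_V$ and, by complementation, $\bs^1_1$-completeness of $\prescript{U}{}G$.
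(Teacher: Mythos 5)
Your overall architecture matches the paper's: the full shift on $\omega^\omega$, the reduction of $\wf$ into $G(\{\mu\})=G_{\{\mu\}}$ with $\prescript{U}{}G$ handled by complementation, the exploitation of escape-to-infinity of the Dirac masses $\delta_{\bar k}$ in the non-compact $\M(X)$, and continuity secured by making each measure in the driving sequence depend on only finitely many nodes of the tree. But the heart of the proof is missing, and the place where you wave at it is exactly where the real work lies. Your selection rule -- ``excursions at slots corresponding to nodes on genuinely infinite branches of $\tau$ have target $\nu$, while excursions at nodes on terminating chains have target $\omega_k$'' -- cannot be implemented: whether a node lies on an infinite branch is itself a $\bs^1_1$ condition on $\tau$, not determined by any finite part of the tree, so no continuous (finitely-based) assignment $\tau\mapsto(\mu_n^\tau)$ can branch on it. Your proposed fix, ``use non-compactness as an absorber, routing every false positive into an $\omega_k$-excursion,'' describes the desired outcome rather than a mechanism: you never say how the construction detects a false positive, and since wellfounded trees on $\omega$ carry arbitrarily long finite chains, some genuinely local criterion must replace the global one.

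The paper's device, which you would need to reinvent, is to assign to \emph{every} node $s=(s(0),\ldots,s(\ell-1))$ of $\omega^{<\omega}$ (whether or not it extends to a branch) its own measure
$\mu_s=\tfrac{1}{2^{\ell}}\delta_{\bar 1}+\sum_{i<\ell}\tfrac{1}{2^{i+1}}\nu_{s(i),i}$,
where the $\nu_{n,i}$ are Dirac masses at pairwise distinct fixed points indexed by a pairing function. This encodes the entire address of $s$ into the measure, with geometrically decaying weights, and yields the key lemma: a sequence $(\mu_{s_i})$ over \emph{distinct} nodes $s_i$ has a weak-$*$ accumulation point if and only if the $s_i$ converge to some $z\in\omega^\omega$, in which case the limit is $\mu_z$. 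Nodes of $\mathcal{T}$ are then visited in a fixed enumeration, each contributing a block approximating $\mu_{t^{(n)}}$ (nodes outside $\mathcal{T}$ contribute blocks escaping to infinity), interleaved with long $0$-blocks so that $\delta_{\bar 0}\in V(x)$ always. Accumulation of $V(x)$ away from $\delta_{\bar 0}$ then \emph{forces} a convergent sequence of node-measures over nodes of $\mathcal{T}$, hence an infinite branch; no a priori knowledge of which nodes are on branches is ever needed. Without this (or an equivalent) encoding, your equivalence ``$\tau\in\wf\iff V(x_\tau)=\{\mu\}$'' is not established, and in particular the forward direction (wellfounded implies no spurious accumulation point) has no argument. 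A secondary caveat: you route the construction through Lemma~\ref{lem:main}, whereas the paper builds $\pi(\mathcal{T})$ by explicit symbolic concatenation; that substitution is plausible but you would still need to check that the lemma's conclusion $V(y)=\{\text{limit points of }(\mu_n)\}$ is being applied correctly when whole subsequences of $(\mu_n^\tau)$ have no accumulation point at all.
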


\begin{rem} \label{nts}
For $V=\{\mu\}$, where $\mu \in \MT(X)$, the sets $G_\mu$ and $G(V)$ may not be equal. For
$x$ to be in $G_\mu$, that is, for $x$ to be a $\mu$ generic point, we must have that
for all increasing sequences of positive integers $(n_i)_{i\ge 1}$ that the empirical measures $\cE(x,n_i)$ converge to
$\mu$. For $x$ to be in $G(V)$, we need only have that for each such sequence $n_i$
that if the empirical measures $\Emp(x,n_i)$ converge to a measure, then that measure must be $\mu$.
This distinction is important, since the set $G_\mu$ is always $\bp^0_3$ by Fact~\ref{upper:Gmu}.
\end{rem}

\begin{proof}[Proof of Theorem~\ref{thm:ncc}]
Let $X=\omega^\omega$ with the (continuous) shift map $T$. For $n\in\omega$
write $\delta_{\bar n}$ for the point-mass measure on the constant $n$
sequence denoted $\bar{n}$. Let $V=\{ \delta_{\bar 0}\}$.

To show that $G(V)$ is $\bp^1_1$-complete, we construct a continuous map
$\pi\colon\tr\to X$ reducing $\wf$ to $G(V)$, where $\wf$
is the set of all wellfounded trees on $\omega$. Recall that a tree $\mathcal{T}$ on $\omega$ is a
set $\mathcal{T}\subseteq \omega^{<\omega}$ of finite length sequences from $\omega$ which is
closed under the operation of taking initial segments. A tree is called \emph{wellfounded} if it doesn't contain an infinite branch, and \emph{illfounded} if it does. Let $\tr$ denote the set of all trees on $\omega$.
We fix an enumeration $t^{(1)},t^{(2)},\ldots$ of $\omega^{<\omega}$ such that
$|t^{(i)}|\le |t^{(j)}|$ for $i\le j$. Using this enumeration we identify
$\omega^{<\omega}$ with $\omega$ and $\tr$ with a closed subset of the
compact Polish space $2^\omega$.
Thus, $\tr$ is a Polish space and $\wf\subseteq \tr$ is a $\bp^1_1$-complete set
in $\tr$ (see \cite[ch. 32.B]{Kechris}). Note that two trees in $\tr$ are close
if and only if there is $n$ such that for each $1\le i \le n$, the
finite word $t^{(i)}$ either belongs to both trees or belongs to neither of them.


First, we will associate to each finite word $s\in\omega^{<\omega}$
a measure $\mu_s$, which will be a
finite combination of Dirac point-mass measures.
Let us fix some auxiliary notation. By $p(n)$ we will denote the $n$-th prime
(that is, $p(0)=2$, $p(1)=3$ etc.). Fix a bijection
$\mathbf{n}\colon\omega\times\omega\to\omega$. Let $Q$ be the set of
positive integers divisible by at least two different primes.
Let $q(n)$ denote the $n$-th smallest element of $Q$. That is, $q(0)=6$, $q(1)=10$, $q(2)=12$ etc.\
Given $(n,i)\in\omega\times\omega$ we set $\nu_{n,i}$ to be the Dirac point-mass
measure concentrated on the constant sequence $\bar p=ppp\ldots\in\omega^\omega$
where $p=p(\mathbf{n}(n,i))$.
For $s=(s(0)\ldots s(\ell-1))\in\omega^{<\omega}$ (so $\ell=|s|$ is the length of $s$)
we define a measure $\mu_s$ as
\[
\mu_s=\frac{1}{2^\ell}\delta_{\bar{1}}+\sum_{i<\ell} \frac{1}{2^{i+1}}\nu_{s(i),i}.
\]

Suppose $s_0, s_1, s_2,\dots$ is a sequence from $\omega^{<\omega}$
with $\lim_i |s_i|=\infty$ and such that $s_i$ converges to $x=(x(0),x(1),\dots) \in \ww$,
that is, $\forall m\ \exists n\ \forall i\geq n\ (s_i\res m=x\res m)$.
Then
the corresponding sequence of measures $\mu_{s_i}$ is easily seen to converge to the
measure $\mu_{x}$ as $i\to\infty$,  where $\mu_x$ is the measure given by
\[
\mu_{x}=\sum_{i=0}^\infty\frac{1}{2^{i+1}}\nu_{x(i),i}=
\sum_{i=0}^\infty\frac{1}{2^{i+1}}\nu_{p(\mathbf{n}(x(i),i))}.
\]

On the other hand assume that  $s_0, s_1, s_2,\ldots$ is a sequence of distinct
elements of $\omega^{<\omega}$ and the corresponding sequence of measures
$\mu_{s_i}$ is Cauchy. We claim that $\lim_i |s_i|=\infty$ and there is an $x \in \ww$
such that $s_i$ converges to $x$. Let $k$ be the least integer, if one exists,
such that either there are infinitely many $i$ with $s_i(k)$ not defined or
$\{ s_i(k) \}$ is not eventually constant. Assuming $k$ is defined, the first case cannot
happen as then $s_i\res k$ is defined and constant for all large enough $i$ but
for some $i<j$ we would have $s_i=s_j$. If the second case
in the definition of $k$ occurs, then for large enough $i$ we have
that $s_i\res k$ is constant and either  $s_i(k)$ takes infinitely many values or
there are two distinct values of $s_i(k)$ both of which occur for infinitely
many values of $i$.
However, we would then have
$D(\mu_{s_i}, \mu_{s_j})\geq 1/2^{k+1}$ for arbitrarily large $i<j$,  a contradiction.
So, $k$ is not defined, that is,
for all $k$ we have that for all large enough $i$ that $s_i(k)$ is defined
and constant. This shows that the $s_i$ converge to some $x \in \ww$.

Finally, for each $s=(s(0)\ldots s(\ell-1))\in\omega^{<\omega}$ and $0\le i<\ell$
we set $n^s_i=p(\mathbf{n}(s(i),i))$ and define a periodic point
\[
y^s=
\left(
\underbrace{n^s_0\ldots n^s_0}_{2^{2\ell}\text{ times}}\
\underbrace{n^s_1\ldots n^s_1}_{2^{2\ell-1}\text{ times}}\  \ldots\
\underbrace{n^s_{\ell-1}\ldots n^s_{\ell-1}}_{2^{\ell+2}\text{ times}}\
\underbrace{1\ldots 1\vphantom{n^s_1}}_{2^{\ell+1}\text{ times}}
\right)^\infty\in\omega^\omega.
\]
Note that the primary period of $y^s$ equals $p_s=2^{2\ell+1} - 2^{\ell + 1}$ and
$D(\Emp(y^s, p_s),\mu_s)$ goes to $0$ as $\ell$ goes to infinity.
We are ready to define $\pi(\mathcal{T})$ for a tree $\mathcal{T}$.
Let $c_n$ be the primary period of $y^s$ for $s=t^{(n)}$.
We fix  
sequences $( a_n)$ and $(b_n)$ satisfying
$a_1=2$, $a_{2n}=c_n$, $a_{2n+1}=1$, $b_{n+1} > b_n$ and
\begin{equation} \label{fqa}
a_nb_n> \max\{ 2^n a_{n+1}, 2^n (a_0 b_0+\cdots+a_{n-1}b_{n-1})\}
\end{equation}
for all $n$.


We set $\ell_0=0$. As stage $n\ge 1$ of the construction of $x=\pi(\beta)$
we will define $x_{[\ell_{n-1},\ell_n)}$ where $\ell_n=\ell_{n-1}+ a_n b_n$.
For $n$ odd we set $x_{[\ell_{n-1},\ell_n)}={\bar{0}}_{[\ell_{n-1},\ell_n)}=0\ldots 0$
(here $0$ appears $a_nb_n$ times). For $n$ even,
we have two cases: if $s=t^{(n/2)}\in \mathcal{T}$, then we put $x_{[\ell_{n-1},\ell_n)}=(y^s)^{b_n}$.
Otherwise (that is, if
$s=s^{(n/2)}\notin \mathcal{T}$), we put $x_{[\ell_{n-1},\ell_n)}=(q(n/2))^{a_nb_n}$.
Note that $\pi(\mathcal{T})=x$ defined in this way depends continuously on $\mathcal{T}$
because $x_{[0,\ell_{2n})}$ is determined by
$\{t^{(1)},\ldots,t^{(n)}\}\cap\mathcal{T}$. Furthermore, it follows from (\ref{fqa}) that the
measures $\Emp(x, \ell_{2n-1})$ converge to $\delta_{\bar 0}$ as $n\to\infty$.
Thus $\delta_{\bar 0} \in V(x)$.
We show that $\delta_{\bar 0}$ is the only measure in $V(x)$ if and only if
$\mathcal{T} \in \tr$ is a wellfounded tree.
Let $\nu \in V(x)$ and $i_k$ be a strictly increasing sequence with
$\lim_{k\to\infty} \cE(x, i_k)=\nu$. Let $(n_k)$
be a sequence of integers such that $\ell_{n_k} \leq i_k < \ell_{n_{k}+1}$ for every $k$.
Without loss of generality we
assume that the $(n_k)$ is strictly increasing and $n_k$ is even for every $k$.
The proof in the other case is similar.

Let 
$e_k=(i_k-\ell_{n_k})/i_k$.
By passing to a subsequence we may assume that $e_k\to e\in [0,1]$ as $k\to\infty$. From
(\ref{fqa}) we have that the sequence $\Emp(x, i_k)$ and
the sequence with $k$-th term given by
\[\nu_k=(1-e) \Emp(x_{[\ell_{n_{k}-1},\ell_{n_k})})+ e\Emp(x_{[\ell_{n_{k}},\ell_{n_k+1})})\]
has the same weak${}^*$ limit in $\cM_T(X)$.
If $e=1$, then $\nu=\delta_{\bar{0}}$, so assume $e<1$.
Take the sequence $(m_k)$ so that $n_k=2m_k$ for every $k$.

Note that $x_{[\ell_{n_{k}},\ell_{n_k+1})}$ is a sequence containing only zeros. Also,
\begin{equation} \label{eqn:konrad19}
x_{[\ell_{n_{k}-1},\ell_{n_k})}=
  \begin{cases}
    q_k^{a_{n_k}b_{n_k}}, \quad & \textrm{ where } q_k=q(m_k),\textrm{ if } t^{(m_k)}\notin\mathcal{T}, \\
    (y^s_{[0, a_{n_k})})^{b_{n_k}}, \quad & \textrm{ where } s = t(m_k), \textrm{ if }  t^{(m_k)} \in \mathcal{T}.
  \end{cases}
\end{equation}
It follows that the sequence $\nu_k$ converges to the same weak* limit as the sequence of measures

\begin{equation} \label{eqn:konrad20}
w_k =
  \begin{cases}
    (1 - e) \delta_{\overline{q_k}} + e \delta_{\bar{0}}, \quad & \textrm{ if } t^{(m_k)}\notin\mathcal{T}, \\
    (1 - e) \mu_{t^{(m_k)}} + e \delta_{\bar{0}}, & \textrm{ if }  t^{(m_k)} \in \mathcal{T}.
  \end{cases}
\end{equation}
Suppose $t^{(m_k)}\notin\mathcal{T}$ for some subsequence $(k(j))_{j \ge 1}$. Then the sequence of measures $\delta_{\overline{q_{k(j)}}}$ would also converge - contradiction.
Thus for sufficiently big $k$, the second case of (\ref{eqn:konrad20}) holds. It follows that the sequence
$(\mu_{t^{(m_k)}})$ converges.
%
This, by our previous
observations, gives that the $t^{(m_k)}$ converge to some $z \in \ww$.
Since each $t^{(m_k)} \in \mathcal{T}$, this show that $z$ is a branch through $\mathcal{T}$
and so $\mathcal{T}$ is illfounded. We have shown that if $V(\pi(\mathcal{T})) \neq \{ \delta_{\bar 0}\}$,
then $\mathcal{T}$ is illfounded. Conversely, if $\mathcal{T}$ is illfounded,
say $z$ is a branch through $\mathcal{T}$, then along the sequence $(n_{2k})$ where
$t^{(k)}=z\res k$ we have that $\Emp(\pi(\mathcal{T}), n_{2k})$ converges to the measure $\mu_z$
defined above, and $\mu_z \neq \delta_{\bar 0}$, and so we have $V(\pi(\mathcal{T}))
\neq \{ \delta_{\bar 0}\}$. So $V(\pi(\mathcal{T}))=\{\delta_{\bar{0}}\}$
if and only if $\mathcal{T}\in\wf$. This finishes the proof. It is easy to see that the above proof shows in fact that $G_{\{\delta_{\bar{0}}\}}$ is $\bp^1_1$-complete, while $\prescript {U}{}{G}$, where $U=\MT(X)\setminus\{\delta_{\bar{0}}\}$ is $\bs^1_1$ complete.
\end{proof}

\section{Complexity of $G_{\mu}$ in a minimal subshift} \label{sec:minimalsubshift}
\newcommand{\one}{\mymathbb{1}}
\newcommand{\zero}{\mymathbb{0}}
\newcommand{\x}{\mathtt{x}}
\newcommand{\suf}{\sqsubseteq}

Recall that a dynamical system $(X, T)$ is called \emph{minimal}, if $\overline{ \{T^n x \colon n \ge 0 \} } = X$ for
every $x \in X$. Minimality is a stronger condition than transitivity. We give an example that shows that $G_\mu$ may be $\bP_3^0$-complete in a minimal system.

\begin{thm} \label{thm:minimal_pi03}
  There exists a minimal shift $(X, T)$ and an ergodic measure $\mu$ such that $G_\mu$ is $\bP_3^0$-complete.
\end{thm}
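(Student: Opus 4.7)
The upper bound $G_\mu\in\bP^{0}_{3}$ is given by Fact~\ref{upper:Gmu}, so the work is to prove $\bP^{0}_{3}$-hardness. The plan is to build $(X,T)$ and $\mu$ hand-in-hand with a continuous reduction $\pi\colon\omega^{\omega}\to X$ from the $\bP^{0}_{3}$-complete set $\mathcal{C}_{3}=\{\beta\in\omega^{\omega}\colon \beta(n)\to\infty\}$.

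\textbf{Step 1: a minimal, non-uniquely-ergodic subshift.} I will build $X\subseteq\{0,1\}^{\mathbb{Z}}$ hierarchically: choose a rapidly increasing sequence of integers $(L_{n})$ and two families of admissible words $(u_{n}),(v_{n})$ with $|u_{n}|=|v_{n}|=L_{n}$, constructed inductively so that
\begin{enumerate}
\item every $u_{n+1}$ and $v_{n+1}$ is a concatenation of copies of $u_{n}$ and $v_{n}$, and each of $u_{n},v_{n}$ occurs in every length-$L_{n+1}$ factor of both $u_{n+1}$ and $v_{n+1}$ (this drives minimality);
\item the frequencies of blocks from a fixed finite dictionary inside $u_{n}$ tend to one limit, while the corresponding frequencies inside $v_{n}$ tend to a distinctly different limit.
\end{enumerate}
Taking $X$ to be the orbit closure of any limit of these words, (i) forces $X$ to be minimal (by the standard Gottschalk-style criterion that every long factor contains all short factors). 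Condition (ii) ensures that $X$ carries at least two distinct invariant measures $\mu,\nu$: use the cylinder-frequency convergence to build ergodic $\mu$ as the limit of $\Emp$-measures on the $u_{n}$-layer and $\nu$ on the $v_{n}$-layer. This is the place where the argument must be careful: I must arrange unique ergodicity to \emph{fail} (so that genericity is a non-trivial condition) while preserving minimality. A Toeplitz-type or S-adic construction with controlled substitutional rates (cf.\ Williams' examples of minimal shifts with prescribed number of ergodic measures) will realize exactly this.

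\textbf{Step 2: the reduction $\pi$.} Given $\beta\in\omega^{\omega}$, define $\pi(\beta)\in X$ as an infinite concatenation of building blocks, produced stage by stage. At stage $k$, concatenate first a long block of type $u_{M_{k}}$ (where $M_{k}=M_{k}(\beta(0),\dots,\beta(k-1))$ is chosen huge compared with all earlier blocks) and then a block of type $v_{m_{k}}$ of length controlled by $\beta(k)$, arranged so that the ratio
\[
\frac{|v_{m_{k}}|}{\text{total length through stage }k}\le \frac{1}{\beta(k)+1}.
\]
Because the $u_{M_{k}}$-blocks dominate by a factor growing with $\beta(k)$, the $v$-contamination is negligible iff $\beta(k)\to\infty$. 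Since only $\beta(0),\dots,\beta(k-1)$ affect the first few stages, the map $\beta\mapsto\pi(\beta)$ is continuous. Admissibility in $X$ is automatic because both $u_{n}$ and $v_{n}$ are (by construction) factors of every sufficiently long admissible word.

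\textbf{Step 3: verification.} A prefix-length calculation, analogous to (and simpler than) the argument in Lemma~\ref{lem:main} and Theorem~\ref{thm:main}, shows
\[
D\bigl(\Emp(\pi(\beta),N),\mu\bigr)\to 0\iff\text{the $v$-contaminations become negligible}\iff\beta\in\mathcal{C}_{3}.
\]
Indeed, if $\beta(k)\to\infty$ the $\mu$-approximating $u_{M_{k}}$-blocks swamp the $v$-blocks and $\pi(\beta)\in G_{\mu}$; if some subsequence $\beta(k_{j})$ stays bounded, taking empirical measures just past the $v$-block at stage $k_{j}$ produces a limit point having a bounded-below $\nu$-component, hence $\neq\mu$, and $\pi(\beta)\notin G_{\mu}$. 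Combined with the upper bound this yields $\bP^{0}_{3}$-completeness.

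\textbf{Main obstacle.} The delicate point is Step~1: producing a subshift that is simultaneously (a)~minimal, (b)~supports two distinct invariant measures whose $\Emp$-approximating words can be placed side by side, and (c)~has enough structural rigidity for $\pi(\beta)$ to genuinely land in $X$ for \emph{every} $\beta$. The other two steps are essentially bookkeeping once such an $X$ is at hand. Beyond that, the verification that $\mu$ is ergodic (not merely invariant) must be checked so that the statement of the theorem is satisfied — this will follow from controlling the empirical measures along the $u$-hierarchy so that $\mu$ corresponds to an extreme point of $\MT(X)$.
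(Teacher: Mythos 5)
Your overall strategy coincides with the paper's: build a minimal, non--uniquely-ergodic Toeplitz-type subshift carrying two ergodic measures distinguished by block frequencies, and reduce $\mathcal{C}_3$ by concatenating blocks so that the contamination by the second measure is asymptotically negligible exactly when $\beta(n)\to\infty$. The genuine gap is the point you dismiss in Step~2 as ``automatic.'' In a minimal subshift an arbitrary concatenation of admissible words is essentially never itself admissible: the fact that $u_n$ and $v_n$ each occur as factors of long words of $L(X)$ does not put your concatenated point $\pi(\beta)$ into $X$, and your design even mixes blocks from different levels ($u_{M_k}$ with $v_{m_k}$), which makes admissibility harder still. This is precisely the difficulty the paper's choice of Oxtoby's construction is engineered to solve: there, the level-$(n+1)$ zero-filled word ends in a long run of copies of the level-$n$ zero-filled word, so Fact~\ref{fact:zeroseq} shows that the \emph{specific} concatenations used in the reduction (powers of the zero-filled words with exponents bounded by $s_{i+1}-2$) are genuine suffixes of admissible words; the reduction then truncates the exponents, $j_i=\min\{\beta(i)+1,\,s_{2i+2}-2\}$, exactly so as to stay inside the language. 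You do name this as the ``main obstacle,'' but naming it does not discharge it, and the reasoning you offer for why it holds is incorrect. Relatedly, your conditions (i) and (ii) in Step~1 are in tension: minimality forces each $v_n$ to occur inside $u_{n+1}$, so its occurrence rate must be summably small to keep the two frequency limits apart --- this is the role of the hypothesis $\sum_j 1/s_j<1$ in the paper, a quantitative requirement your sketch omits.

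A smaller divergence worth noting: the paper's verification is made tractable by first showing that $X$ has \emph{exactly two} ergodic measures separated by the single cylinder $C=\{x: x_0=1\}$, whence $G_{\mu_0}=B_{\chi_C}(a)$ and genericity collapses to one scalar frequency condition. Your Step~3 tracks frequencies over ``a fixed finite dictionary'' of blocks; that can work, but the forward implication of your displayed equivalence then needs an argument that every weak$^*$ limit point of $\Emp(\pi(\beta),N)$ is determined by those finitely many frequencies --- in the paper this follows from the two-ergodic-measure structure, and you would need to supply a substitute.
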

We will examine a construction given by Oxtoby, originally used as an example of a minimal, not uniquely ergodic dynamical system; see \cite[Example 10.3]{Downarowicz} or \cite[§10]{Oxtoby}.
We will construct a subshift of $\{0, 1\}^\omega$.  The construction uses a sequence $(s_j)_{j \geq 1}$ of natural numbers such that
\begin{equation} \label{eqn:s_j_condition}
  \sum_{j=1}^{\infty} \frac{1}{s_j} < 1.
\end{equation}
We will now define certain strings over the alphabet $\{0, 1, \x\}$.
Here $\x$ is a placeholder to be filled at a later stage of the construction.
We begin with
\begin{subequations} \label{eqn:recurrence_def}
	\begin{align}
		W_0 &= \x,
		& \zero_0 &= 0,
		& \one_0 &= 1, \\
		\nonumber
		\textrm{for odd n:}\\
		W_n &= \zero_{n-1}W_{n-1}^{s_n - 1},
		& \zero_n &= \zero_{n-1}^{s_n},
		& \one_n &= \zero_{n-1}\one_{n-1}^{s_n - 1}, \\
		\nonumber
		\textrm{for even n:}\\
		W_n &= \one_{n-1}W_{n-1}^{s_n - 1},
		& \zero_n &= \one_{n-1}\zero_{n-1}^{s_n - 1},
		& \one_n &= \one_{n-1}^{s_n}.
	\end{align}
\end{subequations}
To illustrate, the sequence $(W_n)$ begins with
\begin{align*}
	W_0 &= \mathtt{x}, \\
	W_1 &= \mathtt{0 x x}, \\
	W_2 &= \mathtt{0 1 1 0 x x 0 x x 0 x x}, \\
	W_3 &= \mathtt{0 1 1 0 0 0 0 0 0 0 0 0 0 1 1 0 x x 0 x x 0 x x 0 1 1 0 x x 0 x x 0 x x}.
\end{align*}
For all $n$, the word $\zero_n$ can be obtained by replacing all $\x$ symbols with zeros in $W_n$.
Similarly, the word $\one_n$ can be obtained by replacing all $\x$ symbols with ones in $W_n$.
The lengths of $W_n, \zero_n$ and $\one_n$ are equal, we will denote this length by $l_n$.
Observe that in the sequence $\zero_0, \one_1, \zero_2, \one_3, \dots$ each word is a prefix of the next one,
and thus we can define $y \in \{0, 1\}^\omega$ to be the limit of this sequence.
Finally, we define $X$ to be the closure of the orbit of $y$.
The sequence $y$ is Toeplitz, and $X$ is a minimal subshift; see \cite{Downarowicz} for details.
Note that the language of $X$ can be described as follows:
$$
	L = L(X) = \{ u \in \{0, 1\}^{< \omega} \colon u \textrm{ is a subword of } \zero_n
	\textrm{ or } \one_n \textrm{ for some } n \ge 0 \}.
$$
For a word $u$, define $c(u) = | \{ 0 \le i < |u| \colon u_i = 1\}|$ and $m(u) = c(u) / |u|.$
From (\ref{eqn:recurrence_def}), it follows that $m(\zero_n)$ is a weakly increasing sequence, and therefore $ m(\zero_n) \nearrow a$ for some real number $a \in [0, 1]$. Similarly, $m(\one_n) \searrow b$ for some $b \in [0, 1]$. Again from (\ref{eqn:recurrence_def}), we get $m(\one_n) - m(\zero_n) = \prod_{j=1}^n (1 - 1/s_j)$.
The condition (\ref{eqn:s_j_condition}) implies that $\prod_{j=1}^\infty (1 - 1/s_j) > 0$, and consequently $a < b$.

\begin{fact} \label{fact:twoergmeasures}
	The set $\MTe(X)$ consists of two measures $\mu_0, \mu_1$. Moreover $\mu_0(C) = a$ and $\mu_1(C) = b$,
	where $C = \{ x \in X : x_0 = 1 \}$.
\end{fact}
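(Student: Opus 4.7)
The plan is to parameterise $\MT(X)$ by a single real number in $[a,b]$ via Kakutani--Rokhlin towers inherited from the hierarchy \eqref{eqn:recurrence_def}, and then read off the two ergodic measures as the two extreme points of the resulting segment.

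First, using the Toeplitz-type structure of $y$, I would verify that every $x \in X$ admits a \emph{unique} level-$n$ alignment for each $n$: there is a unique pair $(\phi_n(x),\epsilon_n(x)) \in \{0,\ldots,l_n-1\}\times\{\zero,\one\}$ such that the length-$l_n$ word in $x$ starting at position $-\phi_n(x)$ equals $\epsilon_n(x)_n$, and the alignments for different $n$ are coherent as dictated by \eqref{eqn:recurrence_def}. Setting $B_n^\zero$ (resp.\ $B_n^\one$) to be the clopen set of $x \in X$ with $\phi_n(x)=0$ and $\epsilon_n(x)=\zero$ (resp.\ $\one$), we obtain a Kakutani--Rokhlin partition $X = \bigsqcup_{0\le i<l_n,\,\epsilon\in\{\zero,\one\}} T^{-i}B_n^\epsilon$ refined by the level-$(n+1)$ partition along the pattern prescribed by \eqref{eqn:recurrence_def}.

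For $\mu \in \MT(X)$, $T$-invariance forces $\mu(B_n^\zero)+\mu(B_n^\one)=1/l_n$; set $\alpha_n(\mu)=l_n\mu(B_n^\one) \in [0,1]$. Counting the occurrences of $\zero_n,\one_n$ inside $\zero_{n+1},\one_{n+1}$ via \eqref{eqn:recurrence_def} yields
\[
\alpha_n = \begin{cases} (1-1/s_{n+1})\,\alpha_{n+1} & \text{if $n+1$ is odd,}\\ 1/s_{n+1} + (1-1/s_{n+1})\,\alpha_{n+1} & \text{if $n+1$ is even.}\end{cases}
\]
Decomposing $C = [1]$ as the disjoint union of those atoms $T^{-i}B_n^\epsilon$ with $\epsilon_n[i]=1$ gives
\[
\mu(C) = (1-\alpha_n(\mu))\,m(\zero_n) + \alpha_n(\mu)\,m(\one_n)
\]
for \emph{every} $n$. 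Rearranging, $\alpha_n(\mu) = (\mu(C)-m(\zero_n))/(m(\one_n)-m(\zero_n))$ is completely determined by $\mu(C)$, and hence so are all the $\mu$-masses of the atoms $T^{-i}B_n^\epsilon$. Since each cylinder of $X$ lies in $\sigma(\{T^{-i}B_n^\epsilon\}_{i,\epsilon})$ for all sufficiently large $n$, the whole measure $\mu$ is determined by $\mu(C)$; and because $m(\zero_n)\nearrow a$, $m(\one_n)\searrow b$, one has $\mu(C)\in [a,b]$.

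To produce two distinct ergodic measures, I would pick points $x^{(n)}\in X$ beginning with $\zero_n$ (available since $\zero_n \in L$) and let $\mu_0$ be any weak${}^*$-cluster point of the empirical measures $\Emp(x^{(n)},l_n)$. Standard Krylov--Bogolyubov estimates (the total-variation distance between $\Emp(x,\ell)$ and its $T$-push-forward is at most $2/\ell$) ensure $\mu_0 \in \MT(X)$, and $\Emp(x^{(n)},l_n)(C)=m(\zero_n)\to a$ gives $\mu_0(C)=a$. Repeating the construction along $\one_n$-blocks yields $\mu_1 \in \MT(X)$ with $\mu_1(C)=b$. The affine injection $\mu\mapsto\mu(C)$ then identifies the non-empty, compact, convex set $\MT(X)$ with the segment $[a,b]$, whose extreme points---the ergodic measures---are exactly $\mu_0$ and $\mu_1$, establishing the claim.

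The main obstacle is the very first step: rigorously proving \emph{uniqueness} of the level-$n$ alignment for \emph{every} $x \in X$, not merely for shifts of $y$. This reduces to a combinatorial rigidity argument showing that the Oxtoby words $\zero_n,\one_n$ cannot be read in any $x \in X$ at positions misaligned with the canonical hierarchical block structure; essentially, one must rule out ``quasi-periodic'' occurrences. Once this rigidity is in hand, the remainder of the argument is routine bookkeeping through \eqref{eqn:recurrence_def}.
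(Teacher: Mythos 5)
Your route is genuinely different from the paper's, but it contains a gap that sits exactly at the heart of the statement. The problematic step is the sentence ``Since each cylinder of $X$ lies in $\sigma(\{T^{-i}B_n^\epsilon\}_{i,\epsilon})$ for all sufficiently large $n$, the whole measure $\mu$ is determined by $\mu(C)$.'' This is not true. An atom $T^{-i}B_n^\epsilon$ with $i>0$ prescribes only the coordinates $x_i,\dots,x_{i+l_n-1}$ of a point; the coordinates $x_0,\dots,x_{i-1}$ form the visible suffix of the preceding, truncated level-$n$ block, whose type the atom does not record (both $\zero_n\zero_n$ and $\one_n\zero_n$ occur in $L(X)$, so within one atom both predecessor types occur), hence a cylinder is in general not a union of atoms of this partition. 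Nor are the partitions nested across scales: the level-$(n+1)$ atom says nothing about the types of the level-$n$ blocks preceding the first complete level-$(n+1)$ block. Consequently, knowing the masses $\mu(B_n^\epsilon)$ for all $n$ --- equivalently the marginal frequencies $\alpha_n$, which is all your recursion extracts from $\mu(C)$ --- does not determine $\mu$ on cylinders; for that one needs the joint distribution of consecutive level-$n$ blocks, a priori an independent parameter. Since injectivity of $\mu\mapsto\mu(C)$ on $\MT(X)$ is essentially equivalent to the assertion that $\MTe(X)$ has exactly two elements, this is not a cosmetic issue. It can be repaired, e.g.\ by showing recursively that level-$n$ pair (and longer) block statistics are determined by single-block frequencies at all higher scales, the undetermined boundary contribution after descending $k$ scales carrying weight at most $\prod_{j=1}^{k}1/s_{n+j}\to 0$; but that is an additional argument, not the ``routine bookkeeping'' you describe. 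On top of this, the alignment rigidity you defer (a unique, coherent level-$n$ parsing of every, or at least $\mu$-almost every, point --- stated moreover for a one-sided shift, where ``position $-\phi_n(x)$'' does not literally exist and one must pass to the natural extension or argue up to boundary errors) still has to be proved before any of the tower identities make sense.

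By contrast, the paper does not attempt to prove the count of ergodic measures at all: it quotes from Downarowicz's survey that this Oxtoby--Toeplitz system has exactly two ergodic measures, and then only identifies the values $\mu_0(C)=a$, $\mu_1(C)=b$ by applying the identity $\min_{\mu\in\MTe(X)}\int f\,d\mu=\lim_{k}\min_{x\in X}A_kf(x)$ to $f=\pm\chi_C$, together with the observation that every word of $L(X)$ splits into $\zero_n$ and $\one_n$ blocks. The parts of your argument that parallel this --- producing invariant measures with $\mu(C)=a$ and $\mu(C)=b$ as cluster points of empirical measures along $\zero_n$ and $\one_n$, and the bound $\mu(C)\in[a,b]$ from $\mu(C)=(1-\alpha_n)m(\zero_n)+\alpha_n m(\one_n)$ --- are sound (granted the tower structure); it is precisely the ``exactly two'' part, which you try to prove from scratch rather than cite, that remains unestablished.
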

\begin{proof}
$X$ has exactly two ergodic measures \cite[Chapter 14]{Downarowicz}.
Note that for arbitrary continuous $f \colon X \to \R$, we have
\begin{equation} \label{eqn:measureofcyl1}
	\min_{\mu \in \MTe(X)} \int_X f d \mu =
	\min_{\mu \in \MT(X)} \int_X f d \mu =
	\lim_{k \to \infty} \min_{x \in X} A_k f(x).
\end{equation}
We apply (\ref{eqn:measureofcyl1}) to the characteristic function $\chi_C$.
We claim that the right side of (\ref{eqn:measureofcyl1}) is then equal to $a$. Indeed, note that in this particular case, the right side simplifies to
\begin{equation} \label{eqn:minoverwords}
	\lim_{k \to \infty} \min_{u \in L_k(X)} m(u),
\end{equation}
where $L_k(X) = \{ u \in L(X) \colon |u| = k \}$.
Pick any $n \ge 0$, then any word in $L(X)$ can be divided into blocks,
each equal to $\zero_n$ or $\one_n$, where the first and last block may be incomplete.
This shows that (\ref{eqn:minoverwords}) is greater or equal to $m(\zero_n)$.
Since this is true for any $n$, we get that (\ref{eqn:minoverwords}) is greater or equal to $a$.
On the other hand, by plugging $u = \zero_n$, we find that (\ref{eqn:minoverwords}) is less or equal than $a$.
Thus there exists a measure $\mu_0 \in \MTe(X)$ such that $\mu_0(C) = a$.
Similarly, applying (\ref{eqn:measureofcyl1}) to $- \chi_C$ shows us that
there exists a measure $\mu_1 \in \MTe(X)$ such that $\mu_1(C) = b$.
\end{proof}

\begin{fact} \label{fact:zeroseq}
	For any $k$, suppose we have $j_0, j_1, \dots j_k$ such that $0 \leq j_i \leq s_{i+1} - 2$.
	Then the word $\zero_0^{j_0} \zero_1^{j_1} \dots \zero_k^{j_k}$ is a suffix of $\zero_{k+1}$,
	in particular it belongs to the language $L$.
\end{fact}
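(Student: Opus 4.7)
The plan is to argue by induction on $k$, driven by a single observation about the recurrence \eqref{eqn:recurrence_def}: regardless of the parity of $n$, the word $\zero_{n-1}^{s_n - 1}$ is a suffix of $\zero_n$. For odd $n$ this is immediate from $\zero_n = \zero_{n-1}^{s_n}$, and for even $n$ it is visible directly in $\zero_n = \one_{n-1}\zero_{n-1}^{s_n - 1}$. Once this observation is in hand, the fact follows by a short induction, and the hypothesis $j_i \le s_{i+1} - 2$ is exactly what is needed in the even-parity case to leave room for one extra copy of $\zero_i$ at each stage.

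For the base case $k = 0$, since $1$ is odd we have $\zero_1 = \zero_0^{s_1}$, so $\zero_0^{j_0}$ is a suffix of $\zero_1$ for every $0 \le j_0 \le s_1$, and in particular for $0 \le j_0 \le s_1 - 2$.

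For the inductive step, assume the statement at level $k-1$, so that
\[
u := \zero_0^{j_0} \zero_1^{j_1} \cdots \zero_{k-1}^{j_{k-1}}
\]
is a suffix of $\zero_k$. Fix $j_k$ with $0 \le j_k \le s_{k+1} - 2$. By the opening observation, $\zero_k^{s_{k+1} - 1}$ is a suffix of $\zero_{k+1}$, and since $j_k + 1 \le s_{k+1} - 1$, the word $\zero_k^{j_k + 1}$ is a suffix of $\zero_k^{s_{k+1} - 1}$, hence also of $\zero_{k+1}$. On the other hand, $u$ being a suffix of $\zero_k$ implies that $u \zero_k^{j_k}$ is a suffix of $\zero_k \cdot \zero_k^{j_k} = \zero_k^{j_k + 1}$. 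Chaining these, $u\zero_k^{j_k}$ is a suffix of $\zero_{k+1}$, which completes the induction. Membership in $L$ is then automatic from the definition of $L(X)$, since any suffix of $\zero_{k+1}$ is in particular a subword of $\zero_{k+1}$.

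No serious obstacle is expected. The only subtlety worth flagging is the uniform choice of bound $j_i \le s_{i+1} - 2$: in an odd-parity step one could in principle permit $j_k = s_{k+1} - 1$, but in an even-parity step the leading block $\one_{n-1}$ forces $j_k + 1 \le s_{k+1} - 1$, and the stated hypothesis is precisely the common bound that works in both cases.
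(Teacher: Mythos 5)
Your proof is correct and follows essentially the same induction as the paper's, via the chain $\zero_0^{j_0}\cdots\zero_{k-1}^{j_{k-1}}\zero_k^{j_k}$ being a suffix of $\zero_k^{j_k+1}$, hence of $\zero_k^{s_{k+1}-1}$, hence of $\zero_{k+1}$; you merely make explicit the parity check showing $\zero_k^{s_{k+1}-1}$ is a suffix of $\zero_{k+1}$ in both cases, which the paper leaves implicit.
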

\begin{proof}
	Let us write $u \suf v$ whenever the word $u$ is a suffix of $v$.
	We will proceed by induction.
	For $k=0$, the statement is true.
	Now, suppose the statement holds for $k-1$, then we have
	$\zero_0^{j_0} \zero_1^{j_1} \dots \zero_{k-1}^{j_{k-1}} \suf \zero_k$.
	From this, we obtain
	$
		0_0^{j_0} 0_1^{j_1} \dots 0_{k-1}^{j_{k-1}} 0_{k}^{j_{k}} \suf
		0_k^{j_k + 1} \suf 0_k^{s_{k+1} - 1} \suf 0_{k+1}.
	$
	Since the relation $\suf$ is transitive, we are done.
\end{proof}
From the fact above, it follows that the infinite sequence $0_0^{j_0} 0_1^{j_1} \dots 0_k^{j_k} \dots$ is an element of $X$, if $1 \leq j_i \leq s_{i+1} - 2$ for all $i$. Recall that the set
$ \mathcal{C}_3 := \{ \beta \in \omega ^ \omega : \liminf \beta(n) = +\infty \} $
is a $\bP_3^0$-complete subset of $\in \omega ^ \omega$. We define $ f : \omega ^ \omega \rightarrow X $ by the formula
$$ f(\beta) = \zero_1^{j_0} \zero_3^{j_1} \dots \zero_{2i+1}^{j_i} \dots, \quad \textrm{ where } j_i = \min\{ \beta(i) + 1, s_{2i+2} - 2 \}.$$
We claim that $\beta \in \mathcal{C}_3 \Leftrightarrow f(\beta) \in G_{\mu_0}$.
Fact \ref{fact:twoergmeasures} implies that $G_{\mu_0} = B_{\chi_C}(a)$.
This will simplify our analysis considerably.
\begin{itemize}
	\item[$\beta \not \in \mathcal{C}_3$:]
		Let $k = \liminf \beta(i) + 1$. Consider any $i$ such that $j_i = k$.
		From (\ref{eqn:s_j_condition}) note that $\one_{2i + 1}$ is a prefix of $\zero_{2i+3}$. Thus
		$$
			w =  \underline{\zero_1^{j_0} \zero_3^{j_1} \dots \zero_{2i-1}^{j_{i-1}}} \zero_{2i+1}^{k} \one_{2i + 1}
		$$
		is a prefix of $f(\beta)$. The underlined part is a suffix of $\zero_{2i}$,
		and hence $|w| \le |\zero_{2i}| + k |\zero_{2i+1}| + |\one_{2i+1}|$.
		We have $|\zero_{2i}| / |\zero_{2i+1}| = 1/s_{2i+1}$ and $ |\zero_{2i+1}| = |\one_{2i+1}|$.
		We treat the underlined part as if it contained only zeros; this gives the estimate
		$$
			m(w) \ge \frac{k m(\zero_{2i+1}) + m(\one_{2i+1})}{k + 1 + 1/s_{2i+1}}
			\to
			\frac{k}{k+1} a + \frac{1}{k+1} b > a,
		$$
		as $i \to \infty$. Thus $f(\beta) \not \in B_{\chi_C}(a)$.
	\item[$\beta \in \mathcal{C}_3$:] Any prefix of $f(\beta)$ can be written as
		\begin{equation} \label{eqn:secondcase}
			w =  \underline{\zero_1^{j_0} \zero_3^{j_1} \dots} \zero_{2i-1}^{j_{i-1}} \zero_{2i+1}^{k} u,
		\end{equation}
		where $k \ge 0$ and $u$ is a prefix of $\zero_{2i+1}$.
		Recall that
		$\zero_{2i+1} = \zero_{2i}^{s_{2i}}$ and $ \zero_{2i} = \one_{2i-1} \zero_{2i-1}^{s_{2i - 1} - 1}$.
		so we further have either
		$u = \zero_{2i}^p u'$, where $u'$ is a prefix of $\one_{2i-1}$,
		or $u = \zero_{2i}^p \one_{2i-1} \zero_{2i-1}^q u'$, where $u'$ is a prefix of $\zero_{2i-1}$.
		In both cases, we have $|u'| \le l_{2i-1}$, and consequently
		\begin{equation*}
			c(u') \le 2 l_{2i-1} + p c(\zero_{2i}) + q c(\zero_{2i-1}).
		\end{equation*}
		As in the previous case, the underlined part of (\ref{eqn:secondcase}) has length $\le l_{2i-2}$.
		Thus we obtain
		\begin{multline*}
			m(w) \le
			\frac{  l_{2i-2} + j_{i-1} c(\zero_{2i-1}) + k c(\zero_{2i+1}) + 2 l_{2i-1} + p c(\zero_{2i}) + q c(\zero_{2i-1})}
			{|w|} = \\
			\frac{l_{2i-2} + 2l_{2i-1}}{|w|} +
			\frac{(j_{i-1} + q) l_{2i-1}}{|w|} m(\zero_{2i - 1}) +
			\frac{p l_{2i}}{|w|} m(\zero_{2i}) +
			\frac{k l_{2i+1}}{|w|} m(\zero_{2i + 1}).
		\end{multline*}
		Finally, since $|w| \ge j_{i-1} l_{2i-1}$ and $m(\zero_n) \le a$ for all $n$, we get
		\begin{equation*}
			m(w) \le \frac{3}{j_{i-1}} + a.
		\end{equation*}
		Our assumption $\beta \in \mathcal{C}_3$ implies that $j_i \to \infty$, and hence
		$\limsup m(w) \le a$ as the length of $w$ goes to infinity.
		Fact \ref{fact:twoergmeasures} implies that $\liminf m(w) \ge a$.
		Since $w$ is an arbitrary prefix of $f(\beta)$, we get that $f(\beta) \in B_{\chi_C}(a) = G_{\mu_0}$.
\end{itemize}
Thus $f$ is a reduction of $\mathcal{C}_3$ to $G_{\mu_0}$, which proves that the latter set is $\bP_3^0$-complete. We have completed the proof of Theorem \ref{thm:minimal_pi03}. \qed

We claim that the minimal system $X$ constructed in this section does not have SAPS.
Consider any word $w \in L(X)$ of length $l_n$.
It follows from the construction that $w$ is a subword of one of the words
$\zero_n \zero_n$,
$\zero_n \one_n$,
$\one_n \zero_n$,
$\one_n \one_n$.
Consequently, we obtain the bound $|L_{l_n}(X)| \le 4l_n + 4$. Thus
$$
	\liminf_{n \to \infty} \frac{|L_n(X)|}{n} \le 4.
$$
By a result of Cyr and Kra \cite[Theorem 1.1]{CyrKra}, this implies that there are at most 4 distinct measures in $\MT(X)$
whose set of generic points is nonempty.
Thus, for a suitable choice of $t \in [0, 1]$, the measure $\nu = t\mu_1 + (1-t) \mu_0$
will have $G_\nu = \emptyset$.
On the other hand, if $X$ had SAPS, by Corollary \ref{cor:dense-gv} the set $G_\nu$ would be dense, in particular nonempty.


\nocite{WeissBook, KLO2}

\bibliographystyle{amsplain}


\begin{thebibliography}{10}

\bibitem{AireyJacksonManceComplexityNormalPreserves}
D.~Airey, S.~Jackson, and B.~Mance, \emph{Some complexity results in the theory
  of normal numbers}, Canad. J. Math. \textbf{74} (2022), no.~1, 170--198.

\bibitem{AJKM}
Dylan Airey, Steve Jackson, Dominik Kwietniak, and Bill Mance, \emph{Borel
  complexity of sets of normal numbers via generic points in subshifts with
  specification}, Trans. Amer. Math. Soc. \textbf{373} (2020), no.~7,
  4561--4584. \MR{4127855}

\bibitem{BSS}
L.~Barreira, B.~Saussol, and J.~Schmeling, \emph{Distribution of frequencies of
  digits via multifractal analysis}, J. Number Theory \textbf{97} (2002),
  no.~2, 410--438. \MR{1942968}

\bibitem{BecherHeiberSlamanAbsNormal}
V.~Becher, P.~A. Heiber, and T.~A. Slaman, \emph{Normal numbers and the {B}orel
  hierarchy}, Fund. Math. \textbf{226} (2014), no.~1, 63--78.

\bibitem{BecherSlamanNormal}
V.~Becher and T.~A. Slaman, \emph{On the normality of numbers to different
  bases}, J. Lond. Math. Soc. (2) \textbf{90} (2014), no.~2, 472--494.

\bibitem{BerosDifferenceSet}
K.~A. Beros, \emph{Normal numbers and completeness results for difference
  sets}, J. Symb. Log. \textbf{82} (2017), no.~1, 247--257.

\bibitem{Bogachev}
V.~I. Bogachev, \emph{Measure theory. {V}ol. {I}, {II}}, Springer-Verlag,
  Berlin, 2007. \MR{2267655}

\bibitem{BowenAxiomA}
R.~Bowen, \emph{Periodic points and measures for axiom {A} diffeomorphisms},
  Trans. Amer. Math. Soc. \textbf{154} (1971), 377--397.

\bibitem{CyrKra}
Van Cyr and Bryna Kra, \emph{Counting generic measures for a subshift of linear
  growth}, Journal of the European Mathematical Society (2018).

\bibitem{DGS}
M.~Denker, C.~Grillenberger, and K.~Sigmund, \emph{Ergodic theory on compact
  spaces}, Springer, 1976.

\bibitem{Downarowicz}
Tomasz Downarowicz, \emph{Survey of odometers and {T}oeplitz flows}, Algebraic
  and topological dynamics, Contemp. Math., vol. 385, Amer. Math. Soc.,
  Providence, RI, 2005, pp.~7--37. \MR{2180227}

\bibitem{FFW}
Ai-Hua Fan, De-Jun Feng, and Jun Wu, \emph{Recurrence, dimension and entropy},
  J. London Math. Soc. (2) \textbf{64} (2001), no.~1, 229--244. \MR{1840781}

\bibitem{Garling}
D.~J.~H. Garling, \emph{Analysis on {P}olish spaces and an introduction to
  optimal transportation}, London Mathematical Society Student Texts, vol.~89,
  Cambridge University Press, Cambridge, 2018. \MR{3752187}

\bibitem{GM}
Sophie Grivaux and \'{E}tienne Matheron, \emph{Invariant measures for
  frequently hypercyclic operators}, Adv. Math. \textbf{265} (2014), 371--427.
  \MR{3255465}

\bibitem{ITV}
Godofredo Iommi, Mike Todd, and Anibal Velozo, \emph{Escape of entropy for
  countable {M}arkov shifts}, Adv. Math. \textbf{405} (2022), Paper No. 108507.
  \MR{4438058}

\bibitem{IV}
Godofredo Iommi and Anibal Velozo, \emph{The space of invariant measures for
  countable {M}arkov shifts}, J. Anal. Math. \textbf{143} (2021), no.~2,
  461--501. \MR{4299167}

\bibitem{JMR}
Stephen Jackson, Bill Mance, and Samuel Roth, \emph{A non-{B}orel special
  alpha-limit set in the square}, Ergodic Theory Dynam. Systems \textbf{42}
  (2022), no.~8, 2550--2560. \MR{4448397}

\bibitem{Kechris}
A.~Kechris, \emph{Classical descriptive set theory}, Graduate Texts in
  Mathematics, vol. 156, Springer-Verlag, New York, 1995.

\bibitem{KiLinton}
H.~Ki and T.~Linton, \emph{Normal numbers and subsets of {N} with given
  densities}, Fund. Math. \textbf{144} (1994), no.~2, 163--179.

\bibitem{KuN}
L.~Kuipers and H.~Niederreiter, \emph{Uniform distribution of sequences},
  Dover, Mineola, NY, 2006, MR0419394.

\bibitem{KLO}
Dominik Kwietniak, Martha {\L}{\c{a}}cka, and Piotr Oprocha, \emph{A panorama
  of specification-like properties and their consequences}, Dynamics and
  numbers, Contemp. Math., vol. 669, Amer. Math. Soc., Providence, RI, 2016,
  pp.~155--186. \MR{3546668}

\bibitem{KLO2}
Dominik Kwietniak, Martha \L\c{a}cka, and Piotr Oprocha, \emph{Generic points
  for dynamical systems with average shadowing}, Monatsh. Math. \textbf{183}
  (2017), no.~4, 625--648. \MR{3669782}

\bibitem{KU}
Dominik Kwietniak and Martha Ubik, \emph{Topological entropy of compact
  subsystems of transitive real line maps}, Dyn. Syst. \textbf{28} (2013),
  no.~1, 62--75. \MR{3040767}

\bibitem{Lind}
D.~A. Lind, \emph{Dynamical properties of quasihyperbolic toral automorphisms},
  Ergodic Theory Dynam. Systems \textbf{2} (1982), no.~1, 49--68. \MR{684244}

\bibitem{Marcus}
Brian Marcus, \emph{A note on periodic points for ergodic toral automorphisms},
  Monatsh. Math. \textbf{89} (1980), no.~2, 121--129. \MR{572888}

\bibitem{Olsen}
L.~Olsen, \emph{Applications of multifractal divergence points to sets of
  numbers defined by their {$N$}-adic expansion}, Math. Proc. Cambridge Philos.
  Soc. \textbf{136} (2004), no.~1, 139--165. \MR{2034019}

\bibitem{Oxtoby}
John~C. Oxtoby, \emph{Ergodic sets}, Bull. Amer. Math. Soc. \textbf{58} (1952),
  116--136. \MR{47262}

\bibitem{PS}
C.~E. Pfister and W.~G. Sullivan, \emph{Large deviations estimates for
  dynamical systems without the specification property. {A}pplications to the
  {$\beta$}-shifts}, Nonlinearity \textbf{18} (2005), no.~1, 237--261.
  \MR{2109476}

\bibitem{Sarig}
Omri~M. Sarig, \emph{Phase transitions for countable {M}arkov shifts}, Comm.
  Math. Phys. \textbf{217} (2001), no.~3, 555--577. \MR{1822107}

\bibitem{WeissBook}
Benjamin Weiss, \emph{Single orbit dynamics}, CBMS Regional Conference Series
  in Mathematics, vol.~95, American Mathematical Society, Providence, RI, 2000.
  \MR{1727510}

\end{thebibliography}
\providecommand{\bysame}{\leavevmode\hbox to3em{\hrulefill}\thinspace}
\providecommand{\MR}{\relax\ifhmode\unskip\space\fi MR }
\providecommand{\MRhref}[2]{%
  \href{http://www.ams.org/mathscinet-getitem?mr=#1}{#2}
}
\providecommand{\href}[2]{#2}

\end{document}